\documentclass[font=11pt]{article}
\usepackage[margin=1in]{geometry}
\usepackage{mathtools, nccmath,mathrsfs, esint,dsfont}
\RequirePackage{amsthm,amsmath,amsfonts,amssymb, bbm,bm}
\RequirePackage[numbers]{natbib}
\RequirePackage[colorlinks,citecolor=blue,urlcolor=blue]{hyperref}
\RequirePackage{graphicx}
\usepackage[normalem]{ulem}
\numberwithin{equation}{section}
\theoremstyle{plain}
\newtheorem{theorem}{Theorem}[section]
\newtheorem{lemma}[theorem]{Lemma}
\newtheorem{prop}[theorem]{Proposition}
\newtheorem{corollary}[theorem]{Corollary}
\theoremstyle{remark}
\newtheorem{remark}[theorem]{Remark}
\theoremstyle{definition}
\newtheorem{defn}[theorem]{Definition}

\newenvironment{taggedassump}[1]
 {\assumptionT}
 {\endassumptionT}
\newtheorem{example}[theorem]{Example}

\newcommand{\beq}{\begin{equation}}
\newcommand{\eeq}{\end{equation}}
\def\beqs#1\eeqs{
    \begin{equation}\begin{split}
    #1
    \end{split}\end{equation}
}
\def\beqsj #1\eeqsj
{
    \begin{equation}
    \setlength{\jot}{10pt}
    \begin{split}
    #1
    \end{split}\end{equation}
}

\def\beqsn#1\eeqsn{
    \begin{equation*}\begin{split}
    #1
    \end{split}\end{equation*}
}
\DeclareMathOperator*{\argmin}{argmin}
\DeclareMathOperator*{\argmax}{argmax}
\newcommand{\Var}{{\mathrm{Var}}}

\newcommand{\bvmgam}{\gamma^*}
\newcommand{\f}{v}
\newcommand{\tr}{\intercal}
\newcommand{\tmin}{{\theta_{\mathrm{min}}^*}}

\newcommand{\iid}{\stackrel{\mathrm{i.i.d.}}{\sim}}

\newcommand{\prior}{\pi_{0}} 

\newcommand{\nlp}{v_{0}} 
\newcommand{\nll}{\ell}

\newcommand{\les}{\lesssim}
\newcommand{\nllinfty}{\ell^*}
\newcommand{\MLE}{\hat\theta}

\newcommand{\one}{\mathds{1}}
\newcommand{\MAP}{\tilde\theta}
\newcommand{\UU}{{\mathcal U}}
\newcommand{\ground}{{\theta^{*}}}
\newcommand{\bdelta}{\delta^*}
\newcommand{\bU}{\UU^*}
\renewcommand{\l}{\left}
\renewcommand{\r}{\right}

\newcommand{\PP}{\mathbb P}
\newcommand{\e}{\,\mathrm{exp}}

\newcommand{\la}{\langle}
\newcommand{\ra}{\rangle}
\newcommand{\lla}{\l\la}
\newcommand{\rra}{\r\ra}
\newcommand{\R}{\mathbb R}
\newcommand{\E}{\mathbb E\,}
\newcommand{\TV}{\mathrm{TV}}
\newcommand{\KL}[2]{\mathrm{KL}\l(\,#1\;||\;#2\r)}
\newcommand{\F}{{F^*}}
\newcommand{\FI}[2]{\mathrm{FI}\l(\,#1\;||\;#2\r)}

\newcommand{\Cpri}{{M_0}}

\title{Improved dimension dependence in the Bernstein-von Mises Theorem via a new Laplace approximation bound}
\author{
 Anya Katsevich\thanks{This work was supported by NSF grant DMS-2202963}\\
  \texttt{akatsevi@mit.edu}
 }
\begin{document}
\maketitle

\abstract{
The Bernstein-von Mises theorem (BvM) gives conditions under which the posterior distribution of a parameter $\theta\in\Theta\subseteq\R^d$ based on $n$ independent samples is asymptotically normal. In the high-dimensional regime, a key question is to determine the growth rate of $d$ with $n$ required for the BvM to hold. We show that up to a model-dependent coefficient, $n\gg d^2$ suffices for the BvM to hold in two settings: arbitrary generalized linear models, which include exponential families as a special case, and multinomial data, in which the parameter of interest is an unknown probability mass functions on $d+1$ states.  Our results improve on the tightest previously known condition for posterior asymptotic normality, $n\gg d^3$. Our statements of the BvM are nonasymptotic, taking the form of explicit high-probability bounds. To prove the BvM, we derive a new simple and explicit bound on the total variation distance between a measure $\pi\propto e^{-nf}$ on $\Theta\subseteq\R^d$ and its Laplace approximation. }

\maketitle

\section{Introduction}
In frequentist inference, classical theory shows that the maximum likelihood estimator (MLE) $\hat\theta_n=\hat\theta_n(Y_{1:n})$ based on independent data $Y_{1:n}=\{Y_i\}_{i=1}^n$ is asymptotically normal in the large sample limit: \beq\label{freq}\mathrm{Law}(\hat\theta_n)\approx\mathcal N(\ground, I_n(\ground)^{-1}),\eeq where $\ground$ is the ground truth parameter and $I_n(\ground)=nI(\ground)$ is the scaled Fisher information matrix. The Bernstein-von Mises theorem (BvM) is the Bayesian counterpart of this result. Under conditions similar to the classical theory of the MLE and certain natural assumptions on the prior, it states that the posterior $\pi_n$ of the unknown parameter given independent data $Y_{1:n}$ is also asymptotically normal,
\beq\label{bvmgam-def-intro}\pi_n(\cdot\mid Y_{1:n})\approx\bvmgam_n:=\mathcal N(\hat\theta_n, I_n(\ground)^{-1})\eeq  with high probability under the ground truth data generating process~\cite{vaart_1998}. Here, approximation accuracy is typically measured in total variation (TV) distance. The BvM is a frequentist view on Bayesian inference, because it breaks out of the classic Bayesian paradigm of \emph{fixed data} by making a statement about the posterior as a random object which varies with \emph{random draws} of the data. 
The BvM is an important result because it justifies Bayesian uncertainty quantification from the frequentist perspective. Namely, the BvM shows that Bayesian credible sets $B_n$ of level $1-\alpha$ are asymptotic frequentist confidence sets of the same level~\cite{kleijn2012bernstein}. This is true in the sense that $B_n$ can be expressed by taking a set $C_n$ of approximate probability $1-\alpha$ under the standard normal, scaling by the Fisher information, and recentering it about the MLE $\MLE_n$: $$B_n=\MLE_n+I_n(\ground)^{-1/2}C_n.$$

Thus in the limit, Bayesian and frequentist inference are two sides of the same coin. But interestingly enough, despite this, there is a notable difference between the conditions under which asymptotic normality has been shown to hold in Bayesian and frequentist inference. This difference occurs in the \emph{high-dimensional regime}, in which the parameter $\theta$ lies in $\Theta\subseteq\R^d$, where $d$ grows with sample size $n$. The question is, how fast can $d$ grow with $n$ while preserving asymptotic normality? 

In frequentist inference, the pioneering work~\cite{portnoy1988asymptotic} showed that for exponential families, the MLE is asymptotically normal provided $d^2/n\to0$. The work~\cite{he2000parameters} extended this result to more general parametric models, while~\cite{spokoiny2012parametric} proved finite sample error bounds on the Fisher expansion of the MLE (closely related to normality), provided $d^2\ll n$.

In a departure from the condition $d^2/n\to0$ on the frequentist side, all of the works on the BvM for parametric Bayesian inference in growing dimension required at least that $d^3/n\to0$~\cite{panov2015finite,lu2017bernstein}. The first works were by~\cite{ghosal1999asymptotic,ghosal2000}, proving BvMs for posteriors arising from linear regression models and exponential families, respectively.~\cite{boucheron2009discrete} proves a BvM for the posterior of a discrete probability mass function truncated to its first $d$ entries.~\cite{spokoiny2013bernstein} proves a BvM for parametric models under more general conditions, which include e.g. generalized linear models (GLMs).~\cite{belloni2014posterior} extends the BvM result of~\cite{ghosal2000} to include the case of curved exponential families.~\cite{lu2017bernstein} proves a BvM for nonlinear Bayesian inverse problems. See all of the above works for further references on BvMs with growing parameter dimension. 

This discrepancy between frequentist and Bayesian asymptotics motivates the question: is the more stringent requirement on the growth of $d$ with $n$ in the BvM an inherent difference between the limiting behavior of the posterior and the law of the MLE? In this work, we resolve this question in the negative.

\subsection{Model dependence in high-dimensional BvMs}\label{intro:model}
Before conveying our results, we first raise an additional important consideration in the study of high-dimensional BvMs. Clearly, the TV distance between $\pi_n$ and $\bvmgam_n$ depends not only on $d$ and $n$, but also on the statistical model for the data-generating process. And in the high-dimensional regime, the contribution to the TV error stemming from the model can itself depend on $d$. Thus the condition on the growth of $n$ relative to $d$ required for asymptotic normality is model-dependent.

To demonstrate this phenomenon, consider the aforementioned works~\cite{boucheron2009discrete} and~\cite{lu2017bernstein}. In~\cite{boucheron2009discrete}, one can read the proof of the main result Theorem 3.7 to find that the intermediate Proposition 3.10 implies the bound 
\beq\label{bvmpmf}\TV(\pi_n,\bvmgam_n)=\mathcal O(\sqrt{d^{3+\epsilon}/n\tmin})\eeq for any $\epsilon>0$. We have assumed a flat prior for simplicity, i.e. we set $w_n\equiv1$ in the statement of the proposition, and made some other simplifications. Here, $\tmin$ is the minimum probability in the unknown probability mass function over $d+1$ states, and therefore $\tmin\leq1/(d+1)$. Thus $n\geq d^{3+\epsilon}/\tmin\geq d^{4+\epsilon}$ is required. 

In the work~\cite{lu2017bernstein} on the BvM for inverse problems, it is shown that 
\beq\label{bvminv}\TV(\pi_n,\bvmgam_n)=\mathcal O\l(\sigma(d)^2\log\l(d\vee\sigma(d)\r)\sqrt{d^3/n}\r),\eeq
where $\sigma(d)$ characterizes the degree of ill-posedness of the inverse problem. The author states that $\sigma(d)$ grows as a power of $d$ for mildly ill-posed inverse problems, and exponentially in $d$ for severely ill-posed inverse problems.

To meaningfully compare BvM results across different works studying different statistical models, we separate the contribution to the TV error into a  ``model-dependent" factor and a ``universal" factor. We informally define the universal factor as the term in the error bound depending on $d$ and $n$ only. For example, $1/\sqrt{\tmin}$ and $\sigma(d)^2\log(d\vee\sigma(d))$ are the model-dependent terms in the above TV bounds of~\cite{boucheron2009discrete} and~\cite{lu2017bernstein}, respectively, while $\sqrt{d^{3+\epsilon}/n}$ and $\sqrt{d^3/n}$ are the universal factors, respectively. Thus to clarify our above overview of prior works, the stated condition $n\gg d^3$ stems from the universal factor appearing in the bounds. 

\subsection{Main Contributions}\label{intro:contribute}
We prove nonasymptotic statements of the BvM for several representative statistical settings, showing for the first time that, up to model-dependent factors, $n\gg d^2$ suffices for $\TV(\pi_n,\bvmgam_n)\to0$. Specifically, we prove this for posteriors $\pi_n$ over an unknown parameter $\theta$ in the following settings: 1) arbitrary generalized linear models (GLMs) with data $Y_i\sim p(\cdot\mid X_i^\tr\theta)$, $i=1,\dots,n$ for an exponential family $p$ in canonical form and 2) a multinomial observation $Y^n\sim\mathrm{Multinomial}(n, \theta)$, where $\theta$ is an unknown probability mass function (pmf) on $d+1$ states (as in~\cite{boucheron2009discrete}). In each case, we show that, up to negligible terms, 
\beq\label{bvm-bd}\TV(\pi_n,\bvmgam_n)\leq \bdelta_3\sqrt{d^2/n}\eeq with high probability. Here, $\bdelta_3$ is an explicit model dependent coefficient related to the third derivative of the population log likelihood. This quantity may certainly depend on $d$ in some cases, and the second setting is one such example, as discussed below.

Our first, GLM setting is very general, allowing for the exponential family $p(\cdot\mid\omega)$ to have multivariate parameter $\omega\in\R^k$, $k\geq1$. Thus, the $X_i$ are $d\times k$ matrices. The dimension $k$ is itself also allowed to grow with $n$. In particular, taking $k=d$ and $X_i=I_d$ for all $i=1,\dots,n$, we recover the setting of i.i.d. observations of a $d$-parameter exponential family $p$.  Our BvM in the GLM setting can be compared to the works~\cite{ghosal2000,belloni2014posterior} on the BvM for exponential families and~\cite{spokoiny2013bernstein} on the BvM for univariate ($k=1$) GLMs. To make this comparison, some work is required to first recover explicit TV bounds from the proofs of~\cite{ghosal2000,belloni2014posterior}. With some effort, the recovered bound in these two works, as well as the stated TV bound of~\cite{spokoiny2013bernstein}, can be brought into the structure discussed above (see Appendix~\ref{subsec:bell-spok}). In the end, we see that our BvM tightens the dimension dependence of the universal factor from $\sqrt{d^3/n}$ to $\sqrt{d^2/n}$, while our model-dependent factor $\bdelta_3$ is of the same order of magnitude as that of prior works. 

To apply our general result to a particular GLM of interest, it remains to bound the quantity $\bdelta_3$. We do so in two particular cases: an i.i.d. log-concave exponential family, and a logistic regression model with Gaussian design. In both cases we show that $\bdelta_3$ is bounded by an absolute constant, where in the second case, this holds with high probability over the random design. Thus $d^2\ll n$ suffices for the BvM to hold in these settings, with no extra dimension dependence stemming from the model. Moreover, our treatment of the random design case seems to be new; we are not aware of any works proving the BvM for a random design GLM.

In our second setting of inference of a pmf, the model-dependent factor is $\bdelta_3=1/\sqrt{\tmin}$, where $\tmin$ is as above, i.e. the minimum probability in the ground truth pmf. Thus $\bdelta_3\geq\sqrt d$ in this setting. Our BvM directly improves on the result of~\cite{boucheron2009discrete}: from $\TV(\pi_n,\bvmgam_n)=\mathcal O(\sqrt{d^3/n\tmin})$ to $\TV(\pi_n,\bvmgam_n)=\mathcal O(\sqrt{d^2/n\tmin})$.

The GLM setting differs from the pmf setting in that the former enjoys a stochastic linearity property, to use the terminology of~\cite{spokoiny2022SLS}, while the latter does not. Stochastic linearity means that the sample log likelihood deviates from the population log likelihood by a linear function of $\theta$, and this property simplifies certain elements of the BvM proof. Despite this difference between the two settings, we use the same overall structure for our two BvM proofs. 

The key element of this common proof structure is a bound on the TV accuracy of the Laplace approximation (LA) to $\pi_n$. The LA is another Gaussian approximation $\gamma_n$ to $\pi_n$; for more details, see Section~\ref{sub:intro:LA} below. We derive a new elegant and simple bound on the LA accuracy $\TV(\pi_n,\gamma_n)$ which is tight in its dimension dependence (tightness is known thanks to lower bounds on $\TV(\pi_n,\gamma_n)$ derived in~\cite{katskew}). This bound is at the heart of our BvM proof, and it may also be of independent interest due to its simplicity. 

\noindent To summarize, our main contributions are as follows.
\begin{itemize}
\item We prove the BvM for arbitrary GLMs under the condition $n\gg d^2$ up to a model-dependent contribution $\bdelta_3$, improving for the first time on the condition $n\gg d^3$ from prior work.
\item We study two special cases of a GLM: log concave exponential families and logistic regression with Gaussian design, showing $\bdelta_3$ only contributes a constant factor in both cases.
\item We are the first to prove the BvM with high probability over the distribution of the random design in a GLM, in the high-dimensional regime.
\item We prove the BvM for a posterior over an unknown pmf, again showing $n\gg d^2$ suffices up to the explicit model-dependent contribution $\bdelta_3=1/\sqrt{\tmin}$.
\item All of our statements of the BvM are nonasymptotic, explicitly quantifying both the bound on $\TV(\pi_n,\bvmgam_n)$ and the probability with which the bound holds. Only one other work~\cite{spokoiny2013bernstein} has done this (with the worse rate $\sqrt{d^3/n}$).
\item Our proof is based on a new and simple bound on the TV accuracy of the Laplace approximation to a posterior.
\end{itemize}

\subsection{Proof via Laplace approximation}\label{sub:intro:LA} The main tool in our proof is a new bound on the accuracy of the Laplace approximation (LA). The LA is another Gaussian distribution similar in spirit to $\bvmgam_n$, but which can be computed in practice from given data. Namely, the LA to $\pi_n$ is given by $$\gamma_{n}:=\mathcal N(\MAP_n,\,-\nabla^2(\log\pi_n)(\MAP_n)^{-1}),\qquad \MAP_n:=\argmax_{\theta\in\R^d}\pi_n(\theta).$$ Bayesian inference can be very costly in high dimensions, since typical quantities of interest such as the posterior mean, covariance, and credible sets involve taking integrals against or sampling from $\pi_n$. Making the approximation $\pi_n\approx\gamma_n$ can simplify these computations. Here, we prove a new bound on the TV accuracy of the LA, which essentially takes the form 
\beq\label{laplace-bound}\TV(\pi_n,\gamma_n)\leq \delta_3\sqrt{d^2/n}.\eeq See Theorem~\ref{corr:main} for the precise statement. Here, $\delta_3$ depends on the third derivative of the function $\log\pi_n$ in a neighborhood of $\tilde\theta_n$; it is intimately related to  the ``model-dependent" factor $\bdelta_3$ in our bound~\eqref{bvm-bd} in the statement of the BvM.

The bound~\eqref{laplace-bound} on $\TV(\pi_n,\gamma_n)$ is the key ingredient in our BvM proof. Namely, we apply the triangle inequality $\TV(\pi_n,\bvmgam_n)\leq \TV(\pi_n,\gamma_n) + \TV(\gamma_n,\bvmgam_n)$, and use the bound~\eqref{laplace-bound} on the first term on the righthand side. Since $\delta_3$ is random (depending on the log posterior, which in turn depends on the data $Y_{1:n}$), it remains to prove a high-probability, deterministic upper bound on $\delta_3$. The second term $\TV(\gamma_n,\bvmgam_n)$ is a TV distance between two Gaussians, and can be bounded in a straightforward manner. There is an additional first step in which we quantify the effect of removing the prior from $\pi_n$, but we neglect this technicality in the present overview to simplify the discussion. 

The LA bound~\eqref{laplace-bound} has two important features which are noteworthy on their own and which have bearing on the BvM result. First, we see that the ``universal" factor in the upper bound is $\sqrt{d^2/ n}$. This directly translates to the tightened dimension dependence in the BvM. Second, the bound is remarkably simple and transparent, which allows for a very clear-cut proof of the BvM.  

Let us put our bound~\eqref{laplace-bound} into context. Like prior work on the BvM, most prior work on the LA accuracy in high dimensions also required $d^3\ll n$. In particular, the three works~\cite{dehaene2019deterministic,spokoiny2023dimension,helin2022non} all show, under slightly varying conditions, a bound of the form $\TV(\pi_n,\gamma_n)\leq  \delta_3'\sqrt{d^3/n}$. Here, $\delta_3'$ are various model-dependent coefficients analogous to $\delta_3$ from~\eqref{laplace-bound}. Recently, however, the works~\cite{bp,katskew} obtained bounds on the TV distance with ``universal" factor $d/\sqrt n$. The bound we derive here is significantly simpler than that of~\cite{bp,katskew}; see Section~\ref{subsec:discuss} for a more detailed comparison.

\subsection{Other related works.}\label{related}We note that in certain special cases, the limit of posterior distributions can be obtained in regimes in which $d^2/n$ is not small, including infinite-dimensional regimes. However, the \emph{type} of limit, and the \emph{shape} of the limiting posterior distribution, may be different. 

The work~\cite{bontemps2011bvm} proves the BvM for Gaussian linear regression in increasing dimensions. This is a case in which the log likelihood is quadratic, so that the only non-Gaussian contribution to the posterior stems from the prior. The author shows that under certain conditions on the prior, $d\ll n$ suffices for the BvM to hold. This weaker requirement on $n$ (compared to our $n\gg d^2$) is consistent with our findings. Indeed, our analysis shows that the main source of error in the BvM vanishes when the log likelihood is quadratic; see Remark~\ref{rk:quad}.

The work~\cite{castillo2015bayesian} proves a BvM for the coefficient vector in a sparse linear regression model, in which $d$ may be larger than $n$ but the true coefficient vector has few nonzero entries (exactly how many nonzero entries are allowed depends on various problem parameters). The authors show that the posterior is asymptotically a \emph{mixture} of degenerate normal distributions (owing in part to a special prior construction incorporating sparsity). In particular, the posterior marginal of each coefficient is a mixture of a point mass at zero and a continuous distribution. This has implications for the construction of credible sets.

In infinite dimensions,~\cite{freedman1999wald} gives an example in which the posterior distribution of a functional of an infinite-dimensional parameter converges in distribution to a Gaussian, but which gives rise to credible intervals differing from frequentist confidence intervals. The work~\cite{castillo2013nonparametric} proves a BvM for the Gaussian white noise model, in the sense that the authors show the posterior converges to the ``correct" (from the frequentist perspective) normal distribution in a \emph{weaker} sense than total variation norm. The notion of convergence is still strong enough to prove linear functionals of the parameter are asymptotically normal, centered at efficient estimators and with asymptotically efficient variance. Moreover, certain weighted $L^2$-credible ellipsoids coincide asymptotically with their frequentist counterparts.


Since the BvM is closely related to a high probability bound on the LA error, we mention a few works which prove the latter. This is in contrast to the Laplace bounds cited above, which are for a fixed realization of the data.
In~\cite{barber2016laplace}, the authors prove that for sparse generalized linear models with $n$ samples and $q$ active covariates, the relative LA error of the posterior normalizing constant is bounded by $(q^3\log^3n/n)^{1/2}$ with high probability. In~\cite{tang2021laplace}, the authors study the pointwise ratio between the density of the posterior $\pi_n$ and that of the LA $\gamma_n$. The authors show that for generalized linear models with Gaussian design, the relative error is bounded by $d^3\log n/n$ with probability tending to 1, provided $d^{3+\epsilon}\ll n$ for some $\epsilon>0$. For logistic regression with Gaussian design, they obtain the improved rate $d^2\log n/n$ provided $d^{2.5}\ll n$. 
Finally,~\cite{dehaene2019deterministic} shows $\KL{\pi_n}{\gamma_n}$ goes to 0 with probability tending to 1 if $d^3/n\to0$. 

Finally, we note that the LA is just one approach to approximate a posterior measure by a simpler distribution. Another approach that is similar in spirit is to approximate the posterior $\pi_n$ with a log concave distribution~\cite{nickl2022polynomial,bohr2024logconcave}. Roughly speaking, the log concave approximation coincides with $\pi_n$ in a neighborhood of the mode $\tilde\theta_n$ (where the negative log posterior is indeed convex), and it ``convexifies" the tails of $-\log\pi_n$. The two works~\cite{nickl2022polynomial,bohr2024logconcave} prove bounds on the accuracy of such log concave approximations in the context of high-dimensional Bayesian nonlinear inverse problems. Under suitable restrictions on the growth of $d$ relative to $n$, it is shown that the Wasserstein-2 distance between the two measures is exponentially small in $n$ with high probability under the ground truth data generating process.


\subsection*{Organization} 
The rest of the paper is organized as follows. In Section~\ref{gensec} we state, discuss, and outline the proof of our new Laplace approximation bound in a general setting, where $\pi_\f\propto e^{-n\f}$ need not have the structure of a posterior. In Section~\ref{sec:bayes} we introduce the Bayesian setting and use our Laplace bound to prove some key preliminary results for the BvM proofs. In Sections~\ref{sec:log} and~\ref{sec:pmf} we prove the BvM for posteriors stemming from GLMs and observations of a discrete probability distribution, respectively. Omitted proofs can be found in the Appendix.

\subsection{Notation}\label{notation} The letter $C$ always denotes an absolute constant. The inequality $a\lesssim b$ denotes $a\leq Cb$ for an absolute constant $C$. The statement that $a\les b$ on a random event indicates that there is an absolute nonrandom constant $C$ such that $a\leq Cb$ on the event. We let $|\cdot|$ denote the standard Euclidean norm in $\R^d$. If $A\in\R^{d\times d}$ is a symmetric positive definite matrix and $u\in\R^d$ is a vector, we let $$|u|_A=|A^{1/2}u|.$$ Let $k\geq 1$ and $S$ be a symmetric $k$-linear form, identified with its symmetric tensor representation. We define the $A$-weighted operator norm of $S$ to be
\beq\label{TA}\|S\|_A:=\sup_{|u|_A=1}\la S, u^{\otimes k}\ra = \sup_{|u|_A=1}\sum_{i_1,\dots,i_k=1}^dS_{i_1i_2\dots i_k}u_{i_1}u_{i_2}\dots u_{i_k}.\eeq  
For example, if $k=2$ then $S$ is a symmetric matrix and $\|S\|_A=\|A^{-1/2}SA^{-1/2}\|$, where $\|\cdot\|$ is the standard matrix operator norm. Note that if $k=1$ and $S$ is a 1-linear form identified with its vector representation, then 
$$\|S\|_A=|A^{-1/2}S|,\qquad |S|_A = |A^{1/2}S|.$$ Thus it is important to distinguish between the meaning of $\|\cdot\|_A$ and $|\cdot|_A$ when the quantity inside the norm is a vector. By Theorem 2.1 of~\cite{symmtens}, for symmetric tensors the definition~\eqref{TA} coincides with the standard definition of operator norm:
$$\sup_{|u_1|_A=\dots=|u_k|_A=1} \la S, u_1\otimes\dots\otimes u_k\ra = \|S\|_A=\sup_{|u|_A=1}\la S, u^{\otimes k}\ra.$$ 

\section{Laplace approximation bound}\label{gensec}
We begin by stating and discussing our general bound on the Laplace approximation accuracy. In Section~\ref{subsec:outline}, we outline the proof. In Section~\ref{subsec:discuss}, we compare our bound to other bounds in the literature which achieve the tight $d/\sqrt n$ rate. We consider a general setting which need not stem from Bayesian inference. Throughout the paper, we let $\Theta$ denote a convex subset of $\R^d$. The value $n$ should be thought of as some large positive number. 
\renewcommand{\f}{f}
\begin{defn}\label{def:meas}
Let $\f\in C^2(\Theta)$. Whenever $e^{-n\f}$ is integrable, we define the probability density $\pi_{\f}(\theta)\propto e^{-n\f(\theta)}$, $\theta\in\Theta$. 
If $\f$ has a unique strict global minimizer $\MLE\in\Theta$, we define the following \emph{Laplace approximation} (LA) to $\pi_\f$:
$$\gamma_\f=\mathcal N(\MLE, (nH)^{-1}),\qquad H:=\nabla^2\f(\MLE).$$
\end{defn}
\begin{theorem}\label{corr:main}
Let $\f\in C^2(\Theta)$ be convex with unique strict global minimizer $\MLE$. Assume there is $r\geq6$ such that $\UU(r)\subset\Theta$, where $\UU(r)=\{ |\theta-\MLE|_H\leq r\sqrt{d/n}\}$. Let 
\beq\label{del3def}\delta_3(r)=\sup_{\theta\in\UU(r),\theta\neq\hat\theta}\frac{\|\nabla^2\f(\theta)-\nabla^2\f(\MLE)\|_H}{|\theta-\MLE|_H}\eeq 
 and suppose $r\delta_3(r)\sqrt{d/n}\leq1/2$. Then $e^{-n\f}\in L^1(\Theta)$ so $\pi_\f\propto e^{-n\f}$ is well-defined, and we have
\beq\label{mainbound}
\TV(\pi_\f,\;\gamma_\f)\leq \frac{\delta_3(r)d}{\sqrt {2n}}+ 3e^{-dr^2/9}.
\eeq 
 \end{theorem}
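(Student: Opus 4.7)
My approach is to split $\TV(\pi_\f,\gamma_\f)$ into a local contribution on $\UU(r)$ and tail contributions, and to control the local part via Pinsker's inequality applied to a KL estimate derived through the Brascamp--Lieb inequality. For the Gaussian tail, the change of variables $\psi = \sqrt n\, H^{1/2}(\theta - \MLE)$ turns $\gamma_\f$ into the standard Gaussian on $\R^d$, so chi-squared concentration (Laurent--Massart) gives $\gamma_\f(\UU(r)^c) = \PP(|\psi|^2 > r^2 d) \leq e^{-dr^2/9}$ for $r \geq 6$. For the $\pi_\f$ tail, the assumption $r\delta_3(r)\sqrt{d/n}\leq 1/2$ forces $H/2 \preceq \nabla^2\f \preceq 3H/2$ on $\UU(r)$; two integrations give the quadratic lower bound $\f(\theta)-\f(\MLE) \geq \tfrac14|\theta-\MLE|_H^2$ on $\UU(r)$, and one-dimensional convexity along rays from $\MLE$ extends this to a linear lower bound outside. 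Combined with a matching lower bound on $Z_\f$ from the quadratic upper bound on $\f - \f(\MLE)$, this yields $e^{-n\f} \in L^1(\Theta)$ and $\pi_\f(\UU(r)^c) \leq 2 e^{-dr^2/9}$, together accounting for the $3 e^{-dr^2/9}$ tail term.

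For the main term, let $\bar\f(\theta) = \f(\MLE) + \tfrac12|\theta - \MLE|_H^2$ be the quadratic Taylor approximation, set $g = \f - \bar\f$, and consider the log-concave interpolation $\mu_s \propto e^{-n(\bar\f + sg)}$, so $\mu_0 = \gamma_\f$ and $\mu_1 = \pi_\f$. The cumulant generating function $\phi(s) = \log\mathbb{E}_{\gamma_\f}[e^{-sng}]$ satisfies $\phi'(s) = -n\mathbb{E}_{\mu_s}[g]$ and $\phi''(s) = \Var_{\mu_s}(ng)$, so Taylor's theorem about $s=0$ together with the identity $\KL{\gamma_\f}{\pi_\f} = n\mathbb{E}_{\gamma_\f}[g] + \phi(1)$ yields the clean representation
\[
\KL{\gamma_\f}{\pi_\f} \;=\; \int_0^1 (1-s)\,\Var_{\mu_s}(ng)\,ds.
\]
Pinsker's inequality $\TV(\pi_\f,\gamma_\f) \leq \sqrt{\KL{\gamma_\f}{\pi_\f}/2}$ then reduces the task to bounding this integral by $\delta_3(r)^2 d^2/n$.

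Since each $\f_s := (1-s)\bar\f + s\f$ is convex, $\mu_s$ is log-concave with $\nabla^2(n\f_s) \succeq nH/2$ on $\UU(r)$. Brascamp--Lieb then gives $\Var_{\mu_s}(ng) \leq 2n\,\mathbb{E}_{\mu_s}|\nabla g|_{H^{-1}}^2$. Applying the integral form of Taylor's theorem to $\nabla g(\theta) = \int_0^1 [\nabla^2\f(\MLE + t(\theta - \MLE)) - H](\theta - \MLE)\,dt$ and invoking the $\delta_3(r)$-Lipschitz assumption produces $|\nabla g(\theta)|_{H^{-1}} \leq \tfrac{\delta_3(r)}{2}|\theta - \MLE|_H^2$ on $\UU(r)$. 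A fourth-moment bound $\mathbb{E}_{\mu_s}|\theta - \MLE|_H^4 \les d^2/n^2$, coming from Gaussian comparison for the log-concave $\mu_s$ via the sharp chi-squared identity $\mathbb{E}|\psi|^4 = d(d+2)$, then yields $\Var_{\mu_s}(ng) \les \delta_3(r)^2 d^2/n$; substituting into the KL representation and applying Pinsker produces exactly the main term $\delta_3(r) d/\sqrt{2n}$.

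The main obstacle is that the strong-convexity bound $\nabla^2\f \succeq H/2$ holds only on $\UU(r)$, while Brascamp--Lieb naturally wants a uniform lower bound. I would handle this by either restricting the gradient-norm expectation to $\UU(r)$ and absorbing the $\UU(r)^c$-contribution into the $3e^{-dr^2/9}$ tail (using a convexity-based tail bound for $\mu_s$ analogous to the one above for $\pi_\f$), or by passing through an auxiliary function $\tilde\f$ that equals $\f$ on $\UU(r)$ and is globally $H/2$-strongly convex. Carefully tracking constants through the Brascamp--Lieb factor of $2$, the Pinsker factor of $1/\sqrt 2$, and the $(1-s)$-weighted integration over $s \in [0,1]$ produces the declared constant in $\delta_3(r) d/\sqrt{2n}$.
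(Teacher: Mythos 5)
Your decomposition into a local term plus two tails, and your treatment of both tails (Gaussian concentration for $\gamma_\f(\UU(r)^c)$; the Hessian sandwich $H/2\preceq\nabla^2\f\preceq 3H/2$ on $\UU(r)$, linear growth of $\f$ outside $\UU(r)$ via one-dimensional convexity, and the matching lower bound on the normalizing integral) coincide with the paper's (Lemma~\ref{lingrow}, Lemmas~\ref{lma:intUc}--\ref{lma:intU}, Corollary~\ref{corr:tail}). Where you genuinely diverge is the local term. The paper bounds $\TV(\pi_\f\vert_{\UU},\gamma_\f\vert_{\UU})^2$ by Pinsker plus the log-Sobolev inequality for the strongly log-concave $\pi_\f\vert_{\UU}$, which yields $\frac{n}{4\lambda}\E_{\theta\sim\gamma_\f\vert_{\UU}}\l[\|\nabla\f(\theta)-H(\theta-\MLE)\|_H^2\r]$ --- crucially an expectation under the \emph{truncated Gaussian}, so the fourth moment $\E|\theta-\MLE|_H^4\les d^2/n^2$ is an exact Gaussian computation. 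You instead use the thermodynamic-integration identity $\KL{\gamma_\f}{\pi_\f}=\int_0^1(1-s)\Var_{\mu_s}(ng)\,ds$ together with Brascamp--Lieb. That is a legitimate alternative which isolates the same Taylor remainder $\nabla g=\nabla\f-H(\cdot-\MLE)$, but it shifts the moment computation onto the \emph{non-Gaussian} interpolants $\mu_s$.

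That shift is where your writeup has a real gap. The identity $\E|\psi|^4=d(d+2)$ is a statement about the standard Gaussian; for $\mu_s$ you need a comparison theorem transferring fourth moments of an $(nH/2)$-strongly log-concave measure (restricted to the convex set $\UU(r)$) to those of $\mathcal N(\MLE,(nH/2)^{-1})$. Such comparisons exist (e.g.\ Harg\'e's moment comparison for log-concave perturbations of a Gaussian), but they control moments centered at the \emph{mean} of $\mu_s$, so you would additionally have to show $|\E_{\mu_s}\theta-\MLE|_H\les\sqrt{d/n}$, and your constant budget is tight: Brascamp--Lieb contributes a factor $2n$ (from $\lambda=1/2$), the $(1-s)$ integral a factor $1/2$, Pinsker a factor $1/2$, and the gradient bound a factor $\delta_3(r)^2/4$, so the fourth-moment constant must come out at most $4$ to land on $\delta_3(r)d/\sqrt{2n}$. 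None of this is fatal, but it is precisely the step you have asserted rather than proved, and it is the step the paper's LSI formulation is designed to avoid. Of your two proposed fixes for the locality of strong convexity, restricting all measures to $\UU(r)$ from the outset is the better one: it keeps $\nabla g$ controlled by $\delta_3(r)$ on the entire domain of integration and matches how the paper handles the same issue.
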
 Recall from Section~\ref{notation} that $\|\nabla^2\f(\theta)-\nabla^2\f(\MLE)\|_H=\|H^{-1/2}(\nabla^2\f(\theta)-\nabla^2\f(\MLE))H^{-1/2}\|$ and $|\theta-\MLE|_H=|H^{1/2}(\theta-\MLE)|$, where $\|\cdot\|$ is the standard matrix operator norm and $|\cdot|$ is the Euclidean norm on $\R^d$. We hope that the remarkable simplicity of the bound~\eqref{mainbound} will lead to its broad applicability. See the discussion in Section~\ref{subsec:discuss} on how our bound compares to previous bounds in the literature.
  \begin{remark}[Convexity assumption]\label{rk:cvx}
To prove~\eqref{mainbound}, we use convexity only through the linear growth lower bound~\eqref{nllgrowth} in Lemma~\ref{lingrow} below. Therefore, convexity of $\f$ can be replaced by~\eqref{nllgrowth} in the statement of Theorem~\ref{corr:main}. However, under convexity~\eqref{nllgrowth} holds for any $r\geq0$, giving us the flexibility to optimize the bound~\eqref{mainbound} over $r$.
\end{remark}
\begin{remark}[Smoothness and $\delta_3$]\label{rk:smooth}As noted in the introduction, the only smoothness we require is that $\f\in C^2(\Theta)$ and $\|\nabla^2\f(\theta)-\nabla^2\f(\MLE)\|_H=\mathcal O(|\theta-\MLE|_H)$ for $\theta\in\UU(r)$, to ensure that $\delta_3(r)<\infty$. Thus for example $\f(\theta)=|\theta|^2/2+|\theta|^3/6$ satisfies the conditions of the theorem: it is convex, strictly minimized at $\MLE=0$, and $\f\in C^2$, with $$\nabla^2\f(\theta)=\l(1+\frac12|\theta|\r)I_d + \frac{1}{2|\theta|}\theta\theta^\tr, \quad\theta\neq0$$ and $H=\nabla^2\f(0)=I_d$. It is straightforward to show that
$$\|\nabla^2f(\theta)-\nabla^2f(0)\| =\l \|\frac12|\theta|I_d + \frac{1}{2|\theta|}\theta\theta^\tr\r\| = |\theta|$$ for all $\theta\neq0$, and therefore $\delta_3(r)=1$ for all $r>0$, even though $\nabla^3\f$ does not exist at $\theta=0$. If $\f$ \emph{is} $C^3$ in a neighborhood of $\MLE$, then one can bound $\delta_3(r)$ via
\beq\label{del3redef}\delta_3(r) \leq\sup_{\theta\in\UU(r)}\|\nabla^3\f(\theta)\|_H.\eeq 
\end{remark}
 \begin{remark}[Dependence of $\delta_3$ on $d$]\label{rk:del3dim}
 We have not indicated the dependence of $\delta_3(r)$ on $d$ or $n$, but it does in fact depend on both parameters. The dependence of $\delta_3(r)$ on $n$ is weak, but \emph{the dependence on $d$ may be significant}, simply by virtue of the fact that $f$ is defined on (a subset of) $\R^d$. For example, consider the function $f(\theta)=|\theta|^2/2+| 1^\tr\theta|^3/6$, where $1$ is the all ones vector in $\R^d$. The minimizer of $f$ is $\hat\theta=0$,with $H=\nabla^2f(0)=I_d$. Using that $\nabla^2f(\theta)=I_d+| 1^\tr\theta| 1 1^\tr$, it is straightforward to compute that $\delta_3(r)=d^{3/2}$ for all $r>0$.
 \end{remark}
\begin{remark}[Choice of $r$]\label{rk:chooser} Note that~\eqref{mainbound} is valid for all $r\geq6$ such that $\UU(r)\subseteq\Theta$ and $r\delta_3(r)\sqrt{d/n}\leq1/2$. Thus $r$ can be chosen to optimize the bound. If $d$ is very large, then the second term in the bound~\eqref{mainbound} is exponentially small in $d$, so it suffices to choose $r=6$. If $d$ cannot be considered too large, then the optimal choice of $r$ depends on how $\delta_3(r)$ scales with $r$ and $d$. The following example shows how to choose $r$ in the special case when $\delta_3(r)$ is uniformly bounded.\end{remark}
\begin{example}[Uniformly bounded third derivative]\label{ex:3}Suppose $\f\in C^3(\Theta)$ and $\sup_{\theta\in\Theta}\|\nabla^3\f(\theta)\|_H\leq M$, where $M$ may depend on $d$. To give one example in which the third derivative is indeed uniformly bounded over $\Theta$, see the logistic regression setting in Section~\ref{subsec:logreg}, and the bound~\eqref{bdel3-log} in particular (the quantity $\bdelta_3$ in that bound is closely related to the above $\delta_3$).

Under the stated assumptions, we claim that
\beq\label{mainboundC3}
\TV(\pi_\f,\;\gamma_\f) \leq M\l(\frac{d}{\sqrt {2n}}+ \frac{9}{\sqrt n}\r)
\eeq for all $n/d\geq (12M)^2$. Indeed, note that $M$ is a uniform bound on $\delta_3(r)$ over all $r\geq0$, so~\eqref{mainbound} is optimized by taking $r$ as large as possible while satisfying the constraint $r\delta_3(r)\sqrt{d/n}\leq 1/2$. Thus we take $r=\sqrt{n/d}/2M$, which satisfies $r\geq6$ provided $n/d\geq (12M)^2$. Substituting this choice of $r$ into the second term of~\eqref{mainbound}, and using $\delta_3(r)\leq M$ in the first term gives $$\TV(\pi_\f,\;\gamma_\f)\leq \frac{Md}{\sqrt{2n}}+ 3\e(-n/(36M^2)).$$ Applying the inequality $e^{-x^2/4}\leq 1/x$ with $x=\sqrt n/(3M)$ yields~\eqref{mainboundC3}.\end{example}
\begin{remark}[Affine invariance]\label{rk:aff}
If $T:\R^d\to\R^d$ is a bijective affine map, then the pushforward $T_{\#}\pi_\f$ of $\pi_\f$ under $T$ is given by $T_{\#}\pi_\f = \pi_{\f\circ T^{-1}}$. Also, one can check that $\gamma_{\f\circ T^{-1}}=T_{\#}\gamma_\f$. Since the TV distance is invariant under bijective coordinate transformations, we therefore have $\TV(\pi_\f,\gamma_\f) = \TV(T_{\#}\pi_\f, T_{\#}\gamma_\f) = \TV(\pi_{\f\circ T^{-1}}, \gamma_{\f\circ T^{-1}}).$ Due to this invariance, a good upper bound on $\TV(\pi_\f,\gamma_\f)$ should also be affine invariant, and so should the conditions under which the bound holds. It is straightforward to check that Theorem~\ref{corr:main} has this property.
\end{remark}
\begin{remark}[Application to Bayesian inference]In Bayesian inference, the posterior distribution can be written as $\pi_{\f_n}\propto e^{-n\f_n}$ for some function $\f_n$ which is weakly dependent on $n$. The standard quantity of interest in Bayesian inference are posterior credible sets of level $1-\alpha$; that is, central regions $A$ such that $\pi_{\f_n}(A)=1-\alpha$~\cite{bda3}. The LA $\gamma_{\f_n}$ to $\pi_{\f_n}$ immediately yields explicit, approximate credible sets $\hat A$ based on level sets of $\gamma_{\f_n}$, and Theorem~\ref{corr:main} can be used to bound the deviation of the actual probability $\pi_{\f_n}(\hat A)$ from $1-\alpha$. 
 \end{remark}

\subsection{Proof}\label{subsec:outline}
The assumptions of Theorem~\ref{corr:main} imply two key properties used in the proof: first, $\pi_\f$ is strongly log concave in the neighborhood $\UU(r)$ of $\MLE$. Second, $\f$ grows linearly away from $\MLE$ in $\UU(r)^c$. These two observations are stated in the following lemma:

\begin{lemma}[Hessian lower bound and linear growth]\label{lingrow}Under the assumptions of Theorem~\ref{corr:main}, it holds
\beq\label{nll-lowerbd}
\nabla^2\f(\theta)\succeq \frac{1}{2}H,\quad\forall \theta\in\UU(r).
\eeq
Furthermore, we have the following growth bound:
\beq\label{nllgrowth}
\f(\theta)-\f(\MLE)\geq\frac r4\sqrt{d/n}|\theta-\MLE|_H\qquad \forall \theta\in\Theta\setminus\UU(r).
\eeq 
\end{lemma}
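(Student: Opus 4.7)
The plan is to establish both parts of Lemma~\ref{lingrow} directly from the definition of $\delta_3(r)$ combined with convexity of $\f$. The two statements are essentially independent in spirit: the first is a localized Hessian bound on $\UU(r)$, and the second upgrades this via convexity to a global linear growth estimate outside $\UU(r)$.

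For the Hessian lower bound~\eqref{nll-lowerbd}, we would simply unpack the weighted norm $\|\cdot\|_H$. For any $\theta\in\UU(r)$, the definition of $\delta_3(r)$, the bound $|\theta-\MLE|_H\leq r\sqrt{d/n}$, and the hypothesis $r\delta_3(r)\sqrt{d/n}\leq 1/2$ combine to give
\[
\|\nabla^2\f(\theta)-H\|_H \;\leq\; \delta_3(r)\,|\theta-\MLE|_H \;\leq\; r\,\delta_3(r)\sqrt{d/n}\;\leq\;\tfrac12.
\]
Since $\|\nabla^2\f(\theta)-H\|_H$ is by definition the standard operator norm of the symmetric matrix $H^{-1/2}(\nabla^2\f(\theta)-H)H^{-1/2}$, this translates to $H^{-1/2}(\nabla^2\f(\theta)-H)H^{-1/2}\succeq -\tfrac12 I_d$, and rearranging yields $\nabla^2\f(\theta)\succeq \tfrac12 H$.

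For the linear growth bound~\eqref{nllgrowth}, the strategy is to reduce to the boundary of $\UU(r)$ by convexity. Fix $\theta\in\Theta\setminus\UU(r)$ and let $\theta_*$ be the unique point on the segment from $\MLE$ to $\theta$ with $|\theta_*-\MLE|_H = r\sqrt{d/n}$. The whole segment $[\MLE,\theta_*]$ lies in the $H$-ellipsoid $\UU(r)$, so the first part of the lemma applies pointwise along it. Combined with $\nabla\f(\MLE)=0$, a Taylor expansion with integral remainder yields $\f(\theta_*)-\f(\MLE)\geq \tfrac14|\theta_*-\MLE|_H^2 = r^2 d/(4n)$. Meanwhile, the one-dimensional restriction $t\mapsto \f(\MLE+t(\theta-\MLE))$ is convex with minimum at $t=0$, so its difference quotient based at $t=0$ is nondecreasing in $t$. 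This monotonicity gives
\[
\frac{\f(\theta)-\f(\MLE)}{|\theta-\MLE|_H}\;\geq\;\frac{\f(\theta_*)-\f(\MLE)}{|\theta_*-\MLE|_H}\;\geq\;\frac{r^2 d/(4n)}{r\sqrt{d/n}}\;=\;\frac{r}{4}\sqrt{d/n},
\]
which, after multiplying through by $|\theta-\MLE|_H$, is exactly the desired bound.

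I do not expect a serious obstacle here, since both parts amount to careful bookkeeping. The main spots requiring care are the unpacking of the weighted operator norm $\|\cdot\|_H$ (translating between $\|\cdot\|_H$ and the positive-semidefinite ordering) in the first part, and, for the second part, the observation that $\UU(r)$ is convex (being an $H$-ellipsoid centered at $\MLE$), so the segment $[\MLE,\theta_*]$ is available as a bridge to apply the Hessian bound and thereby compare $\f(\theta)$ to its value on the boundary of $\UU(r)$.
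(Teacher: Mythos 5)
Your proposal is correct and follows essentially the same route as the paper: the Hessian bound comes from $\|\nabla^2\f(\theta)-H\|_H\leq\delta_3(r)|\theta-\MLE|_H\leq r\delta_3(r)\sqrt{d/n}\leq 1/2$, and the growth bound comes from reducing to the boundary point of $\UU(r)$ on the segment $[\MLE,\theta]$ via monotonicity of convex difference quotients, then applying a second-order Taylor expansion with the local bound $\nabla^2\f\succeq\tfrac12 H$. The only cosmetic difference is that the paper phrases the boundary reduction as an infimum over the sphere $\{|u|_H=r\sqrt{d/n}\}$ rather than working with the specific segment point $\theta_*$.
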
See Appendix~\ref{app:gensec} for the proof of the lemma. The bound~\eqref{nll-lowerbd} does not require convexity. Similarly,~\eqref{nllgrowth} only uses convexity via the property of convex functions that the infimum of $(\f(\theta)-\f(\MLE))/|\theta-\MLE|_H$ over $\theta\in\UU(r)^c$ is achieved on the boundary $\{|\theta-\MLE|_H=r\sqrt{d/n}\}$. Thus any other function with this property will satisfy the lemma. Note also that~\eqref{nllgrowth} proves $e^{-n\f}\in L_1(\Theta)$. 

Next, the following lemma presents the main tools in the proof of Theorem~\ref{corr:main}.
\newcommand{\g}{\hat\f}
\newcommand{\nnu}{\hat\mu}
\begin{lemma}\label{lma:TVprelim}
Let $\mu$ and $\nnu$ be two probability densities on $\R^d$ and let $\UU\subset\R^d$ be such that $\mu(\UU)>0,\nnu(\UU)>0$. Let $\mu\vert_\UU$, $\nnu\vert_\UU$ be the restrictions of $\mu$ and $\nnu$ to $\UU$, respectively, renormalized to be probability measures. Then
\beq\label{eq:TVprelim}
\TV(\mu,\nnu)\leq\mu(\UU^c) +\nnu(\UU^c) +\TV\l(\mu\vert_\UU, \nnu\vert_\UU\r).
\eeq
Furthermore, suppose $\mu$ and $\nnu$ are strictly positive on $\UU$ with $\mu\propto e^{-nf}$ and $\nnu\propto e^{-n\g}$ where $\f\in C^2(\UU)$, $\g\in C^1(\UU)$. Suppose also that $\nabla^2\f(\theta)\succeq \lambda H$ for all $\theta\in \UU$ and some $H\succ0$. Then
\beq\label{eq:FI}\TV\l(\mu\vert_\UU, \nnu\vert_\UU\r)^2\leq \frac{n}{4\lambda}\E_{X\sim\nnu\vert_\UU}\l[\|\nabla (\f-\g)(X)\|_H^2\r],\eeq and therefore the total TV distance between $\mu$ and $\nnu$ is bounded as
\beq\label{TVFI}
\TV(\mu,\nnu)\leq \mu(\UU^c) +\nnu(\UU^c)  + \sqrt{\frac n{4\lambda}}\E_{X\sim\nnu\vert_\UU}\l[\|\nabla (\f-\g)(X)\|_H^2\r]^{\frac12}.\eeq
\end{lemma}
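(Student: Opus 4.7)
The plan is to prove \eqref{eq:TVprelim} and \eqref{eq:FI} separately and then combine them. The first is a purely measure-theoretic splitting argument, while the second comes from a logarithmic Sobolev inequality for the strongly log-concave measure $\mu|_\UU$ combined with Pinsker's inequality. The combined bound \eqref{TVFI} follows immediately by substituting \eqref{eq:FI} into \eqref{eq:TVprelim}.

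For \eqref{eq:TVprelim}, I would start from $\TV(\mu,\nnu) = \tfrac12\int|\mu-\nnu|\,d\theta$ and split the integral into $\UU$ and $\UU^c$. The part over $\UU^c$ is bounded trivially by $\tfrac12(\mu(\UU^c)+\nnu(\UU^c))$. On $\UU$, using the decompositions $\mu = \mu(\UU)\mu|_\UU$ and $\nnu=\nnu(\UU)\nnu|_\UU$ and adding/subtracting $\mu(\UU)\nnu|_\UU$ yields, via the triangle inequality,
\[
\tfrac12\int_\UU|\mu-\nnu|\,d\theta \;\leq\; \mu(\UU)\,\TV(\mu|_\UU,\nnu|_\UU) + \tfrac12|\mu(\UU)-\nnu(\UU)|.
\]
Since $\mu(\UU)\leq 1$ and $|\mu(\UU)-\nnu(\UU)| = |\nnu(\UU^c)-\mu(\UU^c)| \leq \mu(\UU^c)+\nnu(\UU^c)$, adding the two pieces gives \eqref{eq:TVprelim}.

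For \eqref{eq:FI}, I observe that $\mu|_\UU$ is the Gibbs measure on the convex set $\UU$ whose potential $n\f$ satisfies $\nabla^2(n\f)\succeq n\lambda H$. Under the affine change of variables $Y = H^{1/2}X$, the pushforward lives on a convex set with a potential that is $n\lambda$-strongly convex in the standard Euclidean sense, so the Bakry--Emery criterion yields a log-Sobolev inequality with constant $(n\lambda)^{-1}$. Transporting back to the original coordinates and applying LSI to the pair $(\nnu|_\UU, \mu|_\UU)$ gives
\[
\KL{\nnu|_\UU}{\mu|_\UU} \;\leq\; \frac{1}{2n\lambda}\,\E_{X\sim\nnu|_\UU}\l[\bigl\|\nabla\log(\nnu|_\UU/\mu|_\UU)(X)\bigr\|_H^2\r].
\]
Since normalizing constants vanish under the gradient, $\nabla\log(\nnu|_\UU/\mu|_\UU) = n\nabla(\f-\g)$, so the right-hand side equals $\tfrac{n}{2\lambda}\E_{\nnu|_\UU}[\|\nabla(\f-\g)\|_H^2]$. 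Pinsker's inequality $\TV^2\leq \tfrac12\KL{\cdot}{\cdot}$ then delivers \eqref{eq:FI}, and \eqref{TVFI} is obtained by plugging this into \eqref{eq:TVprelim}.

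The main subtlety I anticipate is justifying the anisotropic, truncated log-Sobolev inequality cleanly. Convexity of $\UU$ is essential, because restricting a strongly log-concave measure to a nonconvex set can destroy LSI; one can formalize the truncation either by approximating $\mathbf{1}_\UU$ by smooth convex potentials or by appealing directly to known results on LSI for log-concave measures on convex domains. The $H$-weighted norm appearing on the right-hand side must also be matched to the Hessian lower bound $\nabla^2\f\succeq\lambda H$ via the change of coordinates, which is what produces the exact constant $\tfrac{n}{4\lambda}$. Everything else is routine.
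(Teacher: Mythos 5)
Your proof is correct and takes essentially the same route as the paper's: the same split of the TV integral over $\UU$ and $\UU^c$ for \eqref{eq:TVprelim} (your decomposition $\mu=\mu(\UU)\,\mu\vert_\UU$ together with $|\mu(\UU)-\hat\mu(\UU)|\leq\mu(\UU^c)+\hat\mu(\UU^c)$ is a minor variant of the paper's renormalization computation), and for \eqref{eq:FI} the identical chain of Pinsker's inequality followed by the log-Sobolev inequality for $\mu\vert_\UU$, made Euclidean via the change of variables $Y=H^{1/2}X$ so that the potential is $n\lambda$-strongly convex, with $\nabla\log(\hat\mu\vert_\UU/\mu\vert_\UU)=n\nabla(f-\hat f)$ producing the constant $n/(4\lambda)$. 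Your remark that convexity of $\UU$ is essential for the restricted log-Sobolev inequality is well taken: the lemma as stated leaves this hypothesis implicit, and the paper only ever applies it with $\UU$ a convex ellipsoid.
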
See Appendix~\ref{app:lma:TVprelim} for the proof of this lemma. The bound~\eqref{eq:FI} stems from a log-Sobolev inequality (LSI)~\cite{bakry2014analysis}, which can be applied since $\mu$ is strongly log-concave in the neighborhood $\UU$. The inspiration for this technique comes from~\cite{bp}, who also prove a bound on $\TV(\pi_\f,\gamma_\f)$ in the context of Bayesian inference. See Section~\ref{subsec:discuss} for a comparison between our bound and that of~\cite{bp}.

We apply~\eqref{TVFI} with $\UU=\UU(r)$, $\mu=\pi_\f$ (extended to be zero in $\R^d\setminus\Theta$) and $\nnu=\gamma_\f\propto e^{-n\g}$, where $\g(\theta)=\frac{1}{2}|\theta - \MLE|^2_H$. Since $\nabla^2\f\succeq\frac{1}{2}H$ on $\UU(r)$ by~\eqref{nll-lowerbd}, the function $f$ enjoys \emph{local} strong convexity. In particular, the lower bound condition in the lemma is satisfied with $\lambda=1/2$. Therefore, in our setting, the bound~\eqref{TVFI} takes the form
\beq\label{TVFI-nll}
\TV(\pi_\f,\gamma_\f)\leq \pi_\f(\UU^c) + \gamma_\f(\UU^c) + \sqrt{\frac n2}\E_{\theta\sim\gamma_\f\vert\UU}\l[\|\nabla \f(\theta)-H(\theta-\MLE)\|^2_H\r]^{\frac12}.
\eeq We see that to bound the TV distance between $\pi_\f$ and $\gamma_\f$, it suffices to bound the two tail integrals and the local expectation in~\eqref{TVFI-nll}. The tail integral $\pi_\f(\UU^c) = \int_{\UU^c}e^{-n\f}/\int e^{-n\f}$ can be bounded using the linear growth of $\f$ in $\UU^c$. The bound on $\gamma_\f(\UU^c)$ follows by standard Gaussian tail inequalities. Finally, to bound the local expectation we use a Taylor expansion of $\nabla \f(\theta)$ about $\MLE$.to get
$$\nabla \f(\theta)-H(\theta-\MLE)=\nabla f(\theta)-\nabla f(\MLE)-\nabla^2f(\MLE)(\theta-\MLE)= (\nabla^2\f(\xi)-\nabla^2\f(\MLE))(\theta-\MLE)$$ for a point $\xi$ between $\theta$ and $\MLE$. From here we use the definition of $\delta_3$ and simple Gaussian calculations to show that the third term in~\eqref{TVFI-nll} is bounded as $\delta_3(r)d/\sqrt n$.
See Appendix~\ref{app:prop:main} for these calculations, which finish the proof of Proposition~\ref{corr:main}.

\subsection{Comparison with~\cite{bp} and~\cite{katskew}}\label{subsec:discuss}

So far, only two other works have obtained bounds on $\TV(\pi_\f,\gamma_\f)$ scaling as $d/\sqrt n$ --- that of~\cite{bp} and~\cite{katskew}. We compare~\eqref{mainbound} to these two bounds from the perspective of the BvM.

The bound of~\cite{bp} requires $f\in C^3(\Theta)$. Here, we require that $f\in C^2(\Theta)$ and that $\delta_3(r)$ defined in~\eqref{del3def} is finite. The bound of~\cite{bp} is also significantly more complex than ours; see Theorem 3.1 in~\cite{bp} (which relies on the quantities defined in Section 2) compared to our~\eqref{mainbound}. The bound of~\cite{katskew} involves fourth derivatives of $f$. Thus greater regularity of $f$ is required, and the presence of fourth derivatives makes the bound more complex than~\eqref{mainbound}.

The weaker regularity condition required in our bound~\eqref{mainbound} translates into weaker regularity of the log likelihood in the statement of the BvM. As can be seen from the definition of event $E_3$ in~\eqref{events} and the third condition in~\eqref{bUcond}, only boundedness of the Lipschitz constant of the log likelihood's second derivative is needed.  Also, as already mentioned, the simplicity of~\eqref{mainbound} is useful to obtain a streamlined proof and statement of the BvM. 

Finally, let us compare our proof technique to that of~\cite{bp} and~\cite{katskew}. The key idea behind our proof is the same as in~\cite{bp}, that is, to use the log-Sobolev inequality in a local region. However, the implementation is different. Here, we (1) do not split up the function $f$ into a sum of two parts coming from the log likelihood and the log prior, (2) apply the log-Sobolev inequality in an affine invariant way, and (3) handle the tail integrals differently, relying on the linear growth of $f$ at infinity rather than the fact that the prior is a proper Lebesgue density integrating to 1.

The proof technique in~\cite{katskew} is quite different. In this work, the leading order term in the TV distance between $\pi_f$ and $\gamma_f$ is derived, and the main challenge is to bound the difference between $\TV(\pi_f,\gamma_f)$ and this leading term. This is done using high-dimensional Gaussian concentration. Overall, the calculation in~\cite{katskew} is more delicate, but leads to a more complicated bound involving fourth derivatives. 

\section{From Laplace approximation to Bernstein-von Mises: an overview}\label{sec:bayes}
In Section~\ref{subsec:bayes} we review the general Bayesian set-up and state the BvM result we wish to prove. In Section~\ref{sec:bvmprelim} we use Theorem~\ref{corr:main} to prove a key preliminary bound. Using this bound, it is then straightforward to finish the BvM proof for the specific statistical models considered in this work; we do so later in Sections~\ref{sec:log} and~\ref{sec:pmf}. Finally, in Section~\ref{sec:bvm:discuss} we discuss the conditions on the prior and how the condition $d^2\ll n$ arises in the BvM.

\subsection{Bayesian preliminaries and statement of BvM}\label{subsec:bayes}
Let $\{P^n_\theta\; : \; \theta\in\Theta\}$ be a parameterized family of probability distributions, with $\Theta$ an open convex subset of $\R^d$. We assume there is a fixed measure with respect to which $P^n_\theta$ has density $p^n(\cdot\mid\theta)$, for each $\theta\in\Theta$. We observe a sample $Y^n\sim P^n_\ground$ for a ground truth parameter $\ground\in\Theta$. We define the \emph{likelihood} $L:\Theta\to(0,\infty)$ by $L(\theta)=p^n(Y^n\mid\theta)$, and the \emph{negative normalized log likelihood} $\nll:\Theta\mapsto\R$ by 
$$\nll(\theta)= -\frac1n\log L(\theta)=-\frac1n\log p^n(Y^n\mid\theta).$$ The normalization by $1/n$ is natural in the standard case that $P^n_\theta$ is an $n$-fold product measure. In other words, $Y^n=(Y_1,\dots,Y_n)$ and $p^n(Y^n\mid\theta)=\prod_{i=1}^np_i(Y_i\mid\theta)$. We then have that $\log L$ is a sum of $n$ terms. When it exists and is unique, we define 
$$\MLE = \argmin_{\theta\in\Theta}\nll(\theta),$$ the maximum likelihood estimator (MLE). We define the \emph{negative population log likelihood} to be the function
$$\nllinfty(\theta)=\E_{Y^n\sim P^n_\ground}[\nll(\theta)]=-\frac1n\E_{Y^n\sim P^n_\ground}\l[\log p^n(Y^n\mid\theta)\r].$$
\begin{taggedassump}{A1}\label{A1}
The random function $\nll$ is convex and belongs to $C^2(\Theta)$ with probability 1 with respect to the distribution $Y^n\sim p^n(\cdot\mid\ground)$. The function $\nllinfty$ belongs to $C^2(\Theta)$, and the Fisher information matrix 
$$
\F: = \nabla^2\nllinfty(\ground)
$$ is strictly positive definite.
\end{taggedassump}

Let the prior probability density over $\theta$ be denoted $\prior$. We assume without loss of generality that $\Theta$ lies in the support of $\prior$; otherwise, redefine $\Theta$ to be its intersection with the support of $\prior$. Thus there is a function $\nlp:\Theta\to\R$ such that $$\prior(\theta)\propto e^{-\nlp(\theta)},\quad\theta\in\Theta.$$ Finally, the \emph{posterior} distribution is the probability density $\pi_v$ such that
$$\pi_v(\theta)\propto e^{-n\nll(\theta)}\prior(\theta)\propto e^{-nv(\theta)}, \qquad v= \nll+n^{-1}\nlp,\quad\theta\in\Theta.$$ \\

Classically, the BvM states that $\TV(\pi_v,\bvmgam)=o(1)$ with probability tending to 1 (under $P^n_\ground$) as $n\to\infty$~\cite{vaart_1998}. Here, 
\beq\label{def:bvmgam}\bvmgam =  \mathcal N(\hat\theta, (n\F)^{-1}).\eeq From the definition of $\bvmgam$, we see that a necessary ingredient to prove the BvM is showing the MLE exists and is unique with probability tending to 1.

Here, we will be interested in a nonasymptotic version of the BvM. Namely, we will prove a result of the form: ``under certain conditions on $d$, $n$, and the statistical model, $\TV(\pi_v,\bvmgam)\leq \epsilon$ with probability at least $1-p$", for explicit quantities $\epsilon$ and $p$ depending on $d$, $n$, and the model. We are only aware of one other work which proves BvMs in this style~\cite{spokoiny2013bernstein}. Our nonasymptotic BvM results then easily lead to the more classical asymptotic statements.


\subsection{Proof outline and preliminary bounds}\label{sec:bvmprelim}
The proof idea is as follows. First, we show that discarding the prior has negligible effect by bounding $\TV(\pi_v,\pi_\nll)$. Here, $\pi_\nll\propto e^{-n\nll}$. Second, we bound $\TV(\pi_\nll,\gamma_\nll)$ by directly applying Theorem~\ref{corr:main} with $f=\nll$. Here, \beq\label{def:gammanll}\gamma_\nll=\mathcal N(\hat\theta, (n\nabla^2\nll(\hat\theta))^{-1})\eeq is the LA to $\pi_\nll$. Third, we bound $\TV(\gamma_\nll,\bvmgam)$ by comparing the inverse covariance matrices $n\nabla^2\nll(\MLE)$ and $n\nabla^2\nllinfty(\ground)$ of the two Gaussian distributions (which have the same mean). We then use the triangle inequality to bound $\TV(\pi_v,\bvmgam)$ via the sum of the three bounds. To be more precise, we show in this section that these three bounds hold on the event $E(s,\epsilon_2)$ from Definition~\ref{def:Ugen} below. 
To finish the BvM proof, it remains to prove this is a high-probability event. We will carry out this final probabilistic step using the particulars of each of our models in the following two sections. \\

We now introduce several quantities and specify the event $E(s,\epsilon_2)$.
\begin{defn}\label{def:Ugen}
Let
\begingroup
\beqs
\bU(s) &= \l\{\theta\in\R^d\; :\;|\theta-\ground|_\F\leq s\sqrt{d/n}\r\},\\
 \bdelta_{01}(s) &=\sup_{\theta\in\bU(s)}\|\nabla\log\prior(\theta)\|_\F,\\
 \Cpri^* &= d^{-1}\log\sup_{\theta\in\Theta}\prior(\theta)/\prior(\ground),
\eeqs 
\endgroup with the convention that $\bdelta_{01}(s)=\infty$ if the prior is not $C^1$ or not strictly positive on $\bU(s)$. Also, let $\bdelta_3:[0,\infty)\to[0,\infty)$ be some deterministic nondecreasing function to be specified. Then for $s,\epsilon_2\geq0$, define the events
\beqs\label{events}
E_1(s)& = \l\{\|\nabla\nll(\ground)\|_\F\leq  s\sqrt{d/n}\r\},\\
 E_2(\epsilon_2)&=\l\{\|\nabla^2\nll(\ground)-\F\|_\F \leq \epsilon_2\r\},\\
  E_3(s)&=\l\{ \sup_{\theta,\theta'\in\bU(s)}\frac{\|\nabla^2\nll(\theta)-\nabla^2\nll(\theta')\|_\F}{|\theta-\theta'|_\F} \leq\bdelta_3(s)\r\}.
\eeqs
Finally, let
\beqsn
E(s,\epsilon_2)&=E_1(s)\cap E_2(\epsilon_2)\cap E_3(2s),\\
\bar E(s,\epsilon_2)&=\{\exists!\,\mathrm{MLE}\,\MLE,\, |\MLE-\ground|_\F\leq s\sqrt{d/n}\}\cap E_2(\epsilon_2)\cap E_3(2s).
\eeqsn
\end{defn} Here, ``$\exists!\,\mathrm{MLE}\,\MLE$" is an abbreviation for ``the MLE exists and is unique". Also, recall from Section~\ref{notation} that $\|\nabla\nll(\ground)\|_\F=|\F^{-1/2}\nabla\nll(\ground)|$.

We will show that $E(s,\epsilon_2)\subset \bar E(s,\epsilon_2)$, so that in particular, a unique MLE $\MLE$ is guaranteed to exist on $E(s,\epsilon_2)$ and $\MLE$ is close to $\ground$. We then state our key bounds on the event $\bar E(s,\epsilon_2)$.
\begin{remark}\label{rk:reg}Consider the quantity $\delta_3$ defined in~\eqref{del3def}, when applied to $f=\nll$. There are several slight differences between $\delta_3$ and the above $\bdelta_3$. First, $\delta_3$ is a local Lipschitz constant of $\nabla^2\nll$ in the neighborhood $\UU(r)$, anchored at the point $\MLE$, whereas $\bdelta_3(s)$ is an upper bound on a ``uniform" Lipschitz constant in the neighborhood $\bU(s)$. Second, $\bdelta_3$ is a \emph{deterministic} function, bounding this \emph{random} Lipschitz constant uniformly over all realizations of $\nll$ in the event $E_3$. When applying Theorem~\ref{corr:main} to bound $\TV(\pi_\nll,\gamma_\nll)$, we will show that the random quantity $\delta_3$ appearing in the upper bound~\eqref{mainbound} can be further bounded by the deterministic $\bdelta_3$. 
\end{remark}
\begin{remark}[Intuition for events $E_1,E_2,E_3$]\label{interpreting-E}Let us show why we expect $E_1,E_2,E_3$ to be high-probability events in general (though as mentioned above, we will prove this rigorously using the particulars of our models). First, note that $\ground$ is the global minimizer of $\nllinfty$, since $\nllinfty(\theta)=\KL{P^n(\ground)}{P^n(\theta)}+\mathrm{const.}$. Thus $\nabla\nllinfty(\ground)=0$. Second, recall that $\F=\nabla^2\nllinfty(\ground)$. Thus the events $E_1,E_2$ are expressing that $\nabla^k\nll(\ground)\approx\nabla^k\nllinfty(\ground)$ for $k=1,2$, respectively. This is reasonable to expect when $n$ is large and $P^n_\theta$ is a product measure (with $Y^n=(Y_1,\dots,Y_n)$), since then $\nabla^k\nll(\ground)$ can be written as an average of the $n$ independent random variables $\nabla_\theta^k\log p_i(Y_i\mid\ground)$, $i=1,\dots,n$. To interpret the event $E_3$, consider the case when $\nll\in C^3(\Theta)$ with probability 1, and $P^n_\theta$ is still a product measure. Since $\nabla^3\nll\approx\nabla^3\nllinfty$ for large $n$, we expect $\sup_{\theta\in\bU(s)}\|\nabla^3(\nll-\nllinfty)(\theta)\|_\F\leq\epsilon_3(s)$ with high probability, for some suitably chosen $\epsilon_3(s)$. We can then take $\bdelta_3(s)=\epsilon_3(s)+\sup_{\theta\in\bU(s)}\|\nabla^3\nllinfty(\theta)\|_\F$.
\end{remark}
%
\begin{lemma}[Nonasymptotic BvM: preliminary lemma]\label{prop:lapMLE}Suppose $0\leq\epsilon_2\leq1/2$ and 
\beq\label{bUcond}s\geq12, \quad \bU(2s)\subset\Theta, \quad 2s\bdelta_3(2s)\sqrt{d/n}\leq 1/4.\eeq Then $E(s,\epsilon_2)\subseteq\bar E(s,\epsilon_2)$ and on $\bar E(s,\epsilon_2)$ it holds
\begin{align}
\TV(\pi_\nll, \gamma_\nll)&\les \bdelta_3(2s)\,\frac{d}{\sqrt n}+e^{-ds^2/36},\label{pretv1}\\
\TV(\gamma_\nll,\bvmgam) &\les s\bdelta_3(s)\frac{d}{\sqrt n} +\sqrt d\epsilon_2.\label{pretv3}
\end{align} 
If also $ \bdelta_{01}(2s)\leq\sqrt{nd}/6$ then we have the following inequality on $\bar E(s,\epsilon_2)$:
\beq
\TV(\pi_v, \pi_\nll)\les \frac{\bdelta_{01}(2s)}{\sqrt n} + e^{d[\Cpri^*  - (s/12)^2]}.\label{pretv2}
\eeq  In each of the above bounds, the suppressed constant is absolute, independent of all problem parameters. 
\end{lemma}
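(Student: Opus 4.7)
The plan is to follow the four-step decomposition sketched in Section~\ref{sec:bvmprelim}. First, I will show the containment $E(s,\epsilon_2)\subseteq \bar E(s,\epsilon_2)$. On $E_2(\epsilon_2)$ with $\epsilon_2\leq 1/2$ we get $\nabla^2\nll(\ground)\succeq\tfrac12\F$, and the Hessian-Lipschitz bound on $E_3(2s)$ propagates this to
\[
\nabla^2\nll(\theta)\succeq \nabla^2\nll(\ground)-\bdelta_3(2s)|\theta-\ground|_\F\F\succeq\l(\tfrac12-2s\bdelta_3(2s)\sqrt{d/n}\r)\F\succeq\tfrac14\F
\]
for $\theta\in\bU(2s)$, using $2s\bdelta_3(2s)\sqrt{d/n}\leq 1/4$. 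Combined with convexity of $\nll$ from Assumption~\ref{A1} and the gradient bound $\|\nabla\nll(\ground)\|_\F\leq s\sqrt{d/n}$ from $E_1(s)$, a standard strong-convexity/Newton argument yields a unique $\MLE$ with $|\MLE-\ground|_\F\leq s\sqrt{d/n}$ (absorbing absolute constants via $s\geq 12$). I then obtain~\eqref{pretv1} by a direct application of Theorem~\ref{corr:main} to $\f=\nll$ with $H=\nabla^2\nll(\MLE)$: since $H\asymp\F$ by the previous calculation and $|\MLE-\ground|_\F\leq s\sqrt{d/n}$, choosing $r$ of order $s$ ensures the $H$-ball $\UU(r)$ around $\MLE$ lies inside $\bU(2s)$, so the random $\delta_3(r)$ of~\eqref{del3def} is dominated by an absolute constant times $\bdelta_3(2s)$, and the growth condition $r\delta_3(r)\sqrt{d/n}\leq 1/2$ inherits from $2s\bdelta_3(2s)\sqrt{d/n}\leq 1/4$. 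Substituting into~\eqref{mainbound} yields~\eqref{pretv1}.

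For~\eqref{pretv3} the Gaussians $\gamma_\nll$ and $\bvmgam$ share the mean $\MLE$, so Pinsker plus the closed-form Gaussian KL gives
\[
\TV(\gamma_\nll,\bvmgam)\lesssim \bigl\|\F^{-1/2}(\nabla^2\nll(\MLE)-\F)\F^{-1/2}\bigr\|_{\mathrm{HS}}\lesssim \sqrt d\,\|\nabla^2\nll(\MLE)-\F\|_\F,
\]
where the Hilbert-Schmidt norm is bounded by $\sqrt d$ times the operator norm $\|\cdot\|_\F$. I then split
\[
\|\nabla^2\nll(\MLE)-\F\|_\F\leq \|\nabla^2\nll(\MLE)-\nabla^2\nll(\ground)\|_\F+\|\nabla^2\nll(\ground)-\F\|_\F,
\]
controlling the first summand by $\bdelta_3(s)|\MLE-\ground|_\F\leq\bdelta_3(s)s\sqrt{d/n}$ via $E_3$ and the localization of $\MLE$ from Step~1, and the second by $\epsilon_2$ via $E_2$.

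For~\eqref{pretv2} I will use the decomposition
\[
\TV(\pi_v,\pi_\nll)\leq \pi_v(\bU(2s)^c)+\pi_\nll(\bU(2s)^c)+\TV\l(\pi_v\vert_{\bU(2s)},\pi_\nll\vert_{\bU(2s)}\r).
\]
For the local term I apply Lemma~\ref{lma:TVprelim} with $\f=\nll$, $\g=\nll+\nlp/n$, $H=\F$, and $\lambda=1/4$ (from the first step): since $\nabla(\f-\g)=-\nabla\nlp/n$ satisfies $\|\nabla(\f-\g)\|_\F\leq \bdelta_{01}(2s)/n$ on $\bU(2s)$, the right side of~\eqref{eq:FI} is $\leq \bdelta_{01}(2s)^2/n$, yielding the main term $\bdelta_{01}(2s)/\sqrt n$. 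For the tails I use Lemma~\ref{lingrow} (valid by Step~1's local strong convexity, per Remark~\ref{rk:cvx}) to show $\pi_\nll(\bU(2s)^c)\lesssim e^{-cds^2}$; the analogous bound for $\pi_v$ acquires the prior factor $e^{d\Cpri^*}$ from $\prior(\theta)/\prior(\ground)\leq e^{d\Cpri^*}$, while the hypothesis $\bdelta_{01}(2s)\leq\sqrt{nd}/6$ yields $\prior/\prior(\ground)\gtrsim 1$ on a small $\F$-ball around $\ground$, giving a matching lower bound on the partition function and hence the stated form $e^{d[\Cpri^*-(s/12)^2]}$.

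The most delicate bookkeeping lies in verifying Theorem~\ref{corr:main}'s hypotheses in the correct coordinate system in the first step (translating between $\MLE$-centered $H$-geometry and $\ground$-centered $\F$-geometry) and, in the last step, producing simultaneous tail bounds on $\pi_\nll$ and $\pi_v$ with matching Gaussian-type decay together with a partition-function lower bound that captures the interaction between $\Cpri^*$ and $\bdelta_{01}$. The precise exponents $ds^2/36$ and $d(s/12)^2$ follow from tracking absolute constants through Lemma~\ref{lingrow} and the resulting tail integrals; the scaling $s\geq 12$ built into the hypotheses is exactly what provides enough margin to absorb the various absolute factors coming from Step~1 and from the shift between $H$- and $\F$-geometry.
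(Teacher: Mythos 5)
Your proposal follows the same architecture as the paper's proof in all four components: the containment $E\subseteq\bar E$ via a Hessian lower bound on $\bU(2s)$, the bound~\eqref{pretv1} via Theorem~\ref{corr:main} applied to $f=\nll$ with $H=\nabla^2\nll(\MLE)$ and a radius $r$ of order $s$ (the paper takes $r=s/2$ and shows $\UU(s/2)\subseteq\bU(2s)$, $\delta_3(s/2)\leq 8\bdelta_3(2s)$ in Lemma~\ref{lma:UtoUdet}), the bound~\eqref{pretv2} via Lemma~\ref{lma:TVprelim} with $\mu=\pi_\nll$, $\nnu=\pi_v$, $\lambda=1/4$ plus the tail estimate of Corollary~\ref{lma:intUUc}, and the bound~\eqref{pretv3} via a same-mean Gaussian comparison reduced to $\sqrt d$ times the operator norm of $\nabla^2\nll(\MLE)-\F$ (the paper uses the result of Devroye et al.\ in Lemma~\ref{lma:TVgauss} rather than Pinsker, but the two give the same Frobenius-norm bound). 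The splitting of $\|\nabla^2\nll(\MLE)-\F\|_\F$ into the $E_3$ and $E_2$ contributions, and the treatment of the prior ratio via $\Cpri^*$ and $\bdelta_{01}(2s)\leq\sqrt{nd}/6$, are exactly as in the paper.

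The one place your sketch is genuinely loose is the localization $|\MLE-\ground|_\F\leq s\sqrt{d/n}$ in Step 1, which is not a constant you can absorb: it is baked into the definition of $\bar E(s,\epsilon_2)$ and is reused downstream (e.g.\ in $\|\nabla^2\nll(\MLE)-\nabla^2\nll(\ground)\|_\F\leq s\bdelta_3(s)\sqrt{d/n}$ for~\eqref{pretv3}). A ``standard strong-convexity'' argument with modulus $\lambda=1/4$ relative to $\F$ on $\bU(2s)$ and gradient bound $\|\nabla\nll(\ground)\|_\F\leq s\sqrt{d/n}$ localizes the minimizer only within $\lambda^{-1}\|\nabla\nll(\ground)\|_\F\leq 4s\sqrt{d/n}$, which is not only larger than $s\sqrt{d/n}$ but lies outside the region $\bU(2s)$ on which you have established the Hessian lower bound — so the argument as stated does not even guarantee a critical point inside $\bU(2s)$. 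The paper avoids this by invoking a quantitative inverse function theorem (Corollary~\ref{corr:lang2}, built on Lang's lemma in Appendix C) with $\lambda=1/2$, whose hypotheses $\|\nabla^2\nll(\theta)-\nabla^2\nll(\ground)\|_\F\leq\lambda/4$ on $\bU(s)$ and $\|\nabla\nll(\ground)\|_\F\leq 2\lambda q$ are calibrated so that the zero of $\nabla\nll$ lands within $\F$-distance exactly $q=s\sqrt{d/n}$. You would either need to reproduce such a quantitative IFT argument, or redefine the events/conditions with adjusted constants throughout; as written, the containment $E(s,\epsilon_2)\subseteq\bar E(s,\epsilon_2)$ with the paper's exact definition does not follow from your Step 1.
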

Let us briefly outline the proof. On $E_1(s)$ we have that $\|\nabla\nll(\ground)\|_{\F}$ is small. By the inverse function theorem, we can then show that there is a point $\MLE$ near $\ground$ such that $\nabla\nll(\MLE)=0$. Specifically, we show that this point satisfies $|\MLE-\ground|_\F \leq s\sqrt{d/n}$. Next we can combine the conditions from $E_2(\epsilon_2)$ and $E_3(s)$ to show $\nabla^2\nll(\MLE)\succ0$. Thus since $\nll$ is convex, we conclude that $\MLE$ is the unique global minimizer of $\nll$, i.e. it is the MLE. To prove the bound~\eqref{pretv1}, we use the definition of $E_2(\epsilon_2)$ and $E_3(s)$, and the proximity of $\MLE$ to $\ground$, to bound the quantity $\delta_3$ from Theorem~\ref{corr:main} via the quantity $\bdelta_3$. We then directly apply Theorem~\ref{corr:main}. The proof of~\eqref{pretv2} is similar to the proof of Theorem~\ref{corr:main}, with Lemma~\ref{lma:TVprelim} (i.e. the log-Sobolev inequality) as the key tool. Finally, the bound~\eqref{pretv3} is straightforward since $\gamma_\nll$ and $\bvmgam$ are Gaussians with the same mean, and their inverse covariances are close on the event $E_2(\epsilon_2)$. (Recall the definition of $\gamma_\nll$ and $\bvmgam$ from~\eqref{def:gammanll} and~\eqref{def:bvmgam}.) See Appendix~\ref{app:towardsbvm} for the full proof.\\

Given Lemma~\ref{prop:lapMLE}, we can finish the nonasymptotic BvM proof by finding $s,\epsilon_2,\bdelta_3$ which balance the two requirements that $E(s,\epsilon_2)$ is a high probability event and that the righthand sides of~\eqref{pretv2}-\eqref{pretv3} are small. 
In certain cases, the choice of $\epsilon_2$ and $\bdelta_3$ is immediate.  Specifically, suppose $\nll -\nllinfty$ is a linear function of $\theta$. This occurs for generalized linear models, as we discuss in Section~\ref{subsec:log:setup}. This structure has been denoted ``stochastically linear" by~\cite{spokoiny2022SLS}. The implication of this linearity is that $\nabla^k\nll=\nabla^k\nllinfty$ for $k=2,3$. As a result, we can take $\epsilon_2=0$ and any $\bdelta_3(s)$ such that
\beq\label{bdel3-SL-0}
 \sup_{\theta,\theta'\in\bU(s)}\frac{\|\nabla^2\nllinfty(\theta)-\nabla^2\nllinfty(\theta')\|_\F}{|\theta-\theta'|_\F}\leq \bdelta_3(s).\eeq The events $E_2(0)$ and $E_3(s)$ are then trivially satisfied with probability 1. Thus the event $E(s,0)$ reduces to $E(s,0)=E_1(s)\cap E_2(0)\cap E_3(2s)=E_1(s)$.
Note that if $\nll\in C^3(\Theta)$ then we can take
\beq\label{bdel3-SL}
\bdelta_3(s)= \sup_{\theta\in\bU(s)}\|\nabla^3\nllinfty(\theta)\|_\F.\eeq
\begin{lemma}[Nonasymptotic BvM: preliminary lemma under stochastic linearity]\label{prop:lapMLE:SL}Suppose $\nll-\nllinfty$ is a linear function of $\theta$. Let $\bdelta_3$ be any function satisfying~\eqref{bdel3-SL-0}, and assume~\eqref{bUcond}. Then on the event $E_1(s)$, the function $\nll$ has a unique global minimizer $\MLE$, which satisfies $|\MLE-\ground|_\F\leq s\sqrt{d/n}$. Moreover, 
\begin{align}
\TV(\pi_\nll, \gamma_\nll)&\les \bdelta_3(2s)\,\frac{d}{\sqrt n}+e^{-ds^2/36},\label{pretv1-SL}\\
\TV(\gamma_\nll,\bvmgam) &\les s\bdelta_3(s)\frac{d}{\sqrt n}.\label{pretv3-SL}
\end{align} 
for an absolute constant $C$. If in addition $ \bdelta_{01}(2s)\leq\sqrt{nd}/6$, then~\eqref{pretv2} also holds on $E_1(s)$.
\end{lemma}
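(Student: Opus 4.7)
The plan is to derive Lemma~\ref{prop:lapMLE:SL} as a direct specialization of Lemma~\ref{prop:lapMLE}, by leveraging the stochastic linearity assumption to make the events $E_2$ and $E_3$ deterministic.

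The key observation is that if $\nll - \nllinfty$ is a linear function of $\theta$, then $\nabla^2 \nll \equiv \nabla^2 \nllinfty$ on all of $\Theta$ with probability $1$. Evaluating at $\ground$ gives $\nabla^2 \nll(\ground) - \F = 0$, so $E_2(0)$ holds deterministically, allowing us to take $\epsilon_2 = 0$ (which trivially satisfies $\epsilon_2 \le 1/2$). For the third-order event, the same identity means that for all $\theta, \theta' \in \bU(2s)$,
\[
\frac{\|\nabla^2 \nll(\theta) - \nabla^2 \nll(\theta')\|_\F}{|\theta - \theta'|_\F} = \frac{\|\nabla^2 \nllinfty(\theta) - \nabla^2 \nllinfty(\theta')\|_\F}{|\theta - \theta'|_\F} \le \bdelta_3(2s)
\]
by the assumed bound~\eqref{bdel3-SL-0}. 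Thus $E_3(2s)$ also holds with probability~$1$, and so $E(s,0) = E_1(s) \cap E_2(0) \cap E_3(2s) = E_1(s)$.

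Once this identification is made, I would invoke Lemma~\ref{prop:lapMLE} with $\epsilon_2 = 0$. The hypotheses of that lemma are precisely the conditions~\eqref{bUcond} that we have assumed. The conclusion $E(s,0) \subseteq \bar E(s,0)$ translates to the assertion that on $E_1(s)$ the MLE exists, is unique, and satisfies $|\MLE - \ground|_\F \le s\sqrt{d/n}$. (Uniqueness uses convexity of $\nll$ from Assumption~\ref{A1}.) Substituting $\epsilon_2 = 0$ into~\eqref{pretv1} and~\eqref{pretv3} immediately yields~\eqref{pretv1-SL} and~\eqref{pretv3-SL}, since the extra term $\sqrt d\, \epsilon_2$ in~\eqref{pretv3} vanishes. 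The prior bound~\eqref{pretv2} is inherited directly from Lemma~\ref{prop:lapMLE} under the additional assumption $\bdelta_{01}(2s) \le \sqrt{nd}/6$.

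There is essentially no technical obstacle here: the whole content of the lemma is the structural observation that stochastic linearity collapses the random events $E_2$ and $E_3$ to sure events, so the only ingredient that remains genuinely random is $E_1(s)$, which depends on $\nabla \nll(\ground)$. The mild point requiring care is to verify that the constant $\bdelta_3$ produced by~\eqref{bdel3-SL-0} (a \emph{uniform} Lipschitz bound on $\nabla^2 \nllinfty$ over $\bU(2s)$) is a valid choice in the sense of Remark~\ref{rk:reg}, i.e.\ that it dominates the local Lipschitz constant of $\nabla^2 \nll$ at $\MLE$ used inside Theorem~\ref{corr:main}; this is immediate since $\MLE \in \bU(s) \subset \bU(2s)$ on $E_1(s)$ and $\nabla^2 \nll = \nabla^2 \nllinfty$. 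No separate probabilistic argument or additional estimation is required beyond what is already encoded in Lemma~\ref{prop:lapMLE}.
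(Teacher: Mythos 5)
Your proposal is correct and matches the paper's own argument: the paper likewise observes that stochastic linearity forces $\nabla^2\nll\equiv\nabla^2\nllinfty$, so $E_2(0)$ and $E_3(2s)$ hold with probability one, $E(s,0)=E_1(s)$, and Lemma~\ref{prop:lapMLE} with $\epsilon_2=0$ delivers all the stated bounds (the $\sqrt d\,\epsilon_2$ term in~\eqref{pretv3} vanishing). Nothing is missing.
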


\begin{remark}[Linear regression setting]\label{rk:quad}In the linear regression model, we have $Y=\Phi\theta + \frac{1}{\sqrt n}\epsilon$, where $Y\in\R^n$ is all the data arranged in a column vector, $\Phi\in\R^{n\times d}$, $\theta\in\R^d$, and $\epsilon\sim\mathcal N(0, I_n)$. In this model, $\nll$ is a quadratic function of $\theta$, and $\nabla^2\nll=\nabla^2\nllinfty\equiv\F$. Thus we have $\pi_\nll=\gamma_\nll=\bvmgam$. We see that $\TV(\pi_v,\bvmgam)=\TV(\pi_v,\pi_\nll)$, and we can now use the bound from~\eqref{pretv2}. Thus in this case, we can expect the BvM to hold under much weaker conditions than $d^2\ll n$. Indeed, the work~\cite{bontemps2011bvm} shows that under certain conditions on the prior, $d\ll n$ suffices; see the discussion following Theorem 2 of that work. Note that~\cite{bontemps2011bvm} differs from our setting in that the author allows for misspecification.\end{remark}

\subsection{Discussion}\label{sec:bvm:discuss}


\paragraph{Assumptions on the prior.} In this discussion we take an asymptotic viewpoint and consider the conditions under which the prior contribution~\eqref{pretv2} can be made arbitrarily small as $n\to\infty$. By definition, all of the above quantities (most notably $d=d_n$) are now indexed by $n$ as $n\to\infty$. However, for simplicity of notation, we omit the $n$ subscript. 


Now, we would like for the first term on the righthand side of~\eqref{pretv2} to go to zero as $n\to\infty$ for each fixed $s$. Meanwhile, the quantity $M_{0}^*$ in the exponent of the second term should remain bounded as $n\to\infty$. If these two conditions hold, we can then take $s\to\infty$ in order to make the righthand side go to zero and the probability of the event $\bar E(s,\epsilon_2)$ go to 1. We prove the latter fact in our examples, with a proper choice of $\epsilon_2$. Thus to summarize, we should have that
\beq\label{priorassum}M_{0}^*=\mathcal O(1),\qquad \bdelta_{01}(2s)= o(\sqrt n),\qquad\text{as}\; n\to\infty.\eeq 
 The condition $M_{0}^*=\mathcal O(1)$ is standard and has been assumed in BvM proofs in the works~\cite{ghosal1999asymptotic,ghosal2000,belloni2014posterior,lu2017bernstein,spokoiny2013bernstein}. Regarding the regularity of $\log\prior$, we impose the slightly stronger assumption that $\log\prior$ is $C^1$ in a neighborhood of $\ground$, whereas the cited works only require that $\log\prior$ is Lipschitz in this neighborhood. However, suppose $\log\prior$ \emph{is} $C^1$, and we use $\bdelta_{01}(2s)$ as an upper bound on this Lipschitz constant. Then we have actually relaxed the condition on this quantity, from $\bdelta_{01}(2s)= o(\sqrt {n/d})$ in the five cited works to $\bdelta_{01}(2s)=o(\sqrt n)$ here. Strengthening the regularity from Lipschitz to $C^1$ allows us to take advantage of the log-Sobolev inequality in $\bU(s)$ to obtain a tighter bound on $\TV(\pi_{\nll},\pi_{v})$ than that of prior works.  \\
  
To get a better sense for the quantities $\Cpri^*$ and $\bdelta_{01}(s)$ and particularly their magnitude, we compute them for several priors.
\begin{example}\label{ex:prior}
\emph{Flat prior}: $\prior(\theta)\equiv1$. Then $\Cpri^*= 0$, $\bdelta_{01}(s)\equiv 0.$\\
\emph{Gaussian prior} $\prior=\mathcal N(\mu, \Sigma)$:
\beqs\label{gpri}
\Cpri^* &\leq\frac1{2d}\|\Sigma^{-1}\|_{\F}|\mu-\ground|_\F^2,\\
 \bdelta_{01}(s)&\leq \|\Sigma^{-1}\|_\F(|\mu-\ground|_\F+s\sqrt{d/n}).
\eeqs
\emph{Multivariate Student's t prior} $\prior=t_\nu(\mu, \Sigma)$:
\beqs\label{tpri}
\Cpri^* &\leq \frac{\nu+d}{2\nu d}\|\Sigma^{-1}\|_\F|\mu-\ground|_\F^2,\\
\bdelta_{01}(s)&\leq \frac{\nu+d}{\nu}\|\Sigma^{-1}\|_\F(|\mu-\ground|_\F+s\sqrt{d/n}).
\eeqs
See Appendix~\ref{app:towardsbvm} for these calculations. Now suppose $s=\mathcal O(1)$ and consider the Gaussian prior $\mathcal N(\mu,\Sigma)$. If, for example, we have $$|\mu-\ground|_\F=\mathcal O(1),\qquad \|\Sigma^{-1}\|_{\F}=\mathcal O(d),$$ then~\eqref{gpri} gives $\Cpri^*=\mathcal O(1)$ and $\bdelta_{01}(s)=\mathcal O(d)$. Now, since we should have $d/\sqrt n=o(1)$ for our bounds in Lemma~\ref{prop:lapMLE} to be small, it follows that $\bdelta_{01}(s)=o(\sqrt n)$. Therefore, the conditions~\eqref{priorassum} are satisfied. For the Student's t prior $t_\nu(\mu, \Sigma)$ with $\nu=\mathcal O(1)$ degrees of freedom,  if
$$|\mu-\ground|_\F=\mathcal O(1),\qquad \|\Sigma^{-1}\|_{\F}=\mathcal O(1),$$ then~\eqref{tpri} gives $\Cpri^*=\mathcal O(1)$ and $\bdelta_{01}(s)=\mathcal O(d)=o(\sqrt n)$, satisfying the conditions~\eqref{priorassum}.

\end{example} 

\paragraph{The condition $d^2\ll n$.}
Here we discuss how the ``universal" factor $d/\sqrt n$ arises in our bounds, and the obstacles to showing that $d^2\ll n$ is a \emph{necessary} condition for the BvM to hold. For simplicity of discussion, we focus on the stochastically linear case. Thus $\nabla^2\nll=\nabla^2\nllinfty$ and therefore in particular, $\epsilon_2=0$. Also, we assume a flat prior, so that $\pi_v=\pi_\nll$.

Consider the preliminary bounds~\eqref{pretv1} and~\eqref{pretv3} in Lemma~\ref{prop:lapMLE}.  The inequality~\eqref{pretv1} is a bound on $\TV(\pi_\nll,\gamma_\nll)$, the TV error of the LA to $\pi_\nll$. It has been shown in~\cite{katskew} that $d/\sqrt n$ is the sharp rate of approximation for the LA; see that work for the intuition behind this rate. Interestingly, the bound~\eqref{pretv3} on $\TV(\gamma_\nll,\bvmgam)$ is of the same order of magnitude as the LA error bound. Thus our overall bound on $\TV(\pi_\nll,\bvmgam)$ is made up of two dominant contributions of order $d/\sqrt n$, stemming from both $\TV(\pi_\nll,\gamma_\nll)$ and $\TV(\gamma_\nll,\bvmgam)$.

Let us take a closer look at how $d/\sqrt n$ arises in the bound on $\TV(\gamma_\nll,\bvmgam)$. Using that $\nabla^2\nll=\nabla^2\nllinfty$, we have $\gamma_\nll=\mathcal N(\MLE, (n\nabla^2\nllinfty(\MLE))^{-1})$ and $\bvmgam=\mathcal N(\MLE, (n\nabla^2\nllinfty(\ground))^{-1})$. 
Now, using a result of~\cite{devroye2018total} and a calculation in Lemma~\ref{lma:TVgauss}, we have
\beq\label{TVD123}
\TV\l(\mathcal N(\mu, \Sigma_1),\;\mathcal N(\mu, \Sigma_2)\r) \asymp \|\Sigma_1^{-1/2}\Sigma_2\Sigma_1^{-1/2}-I_d\|_{\text{Fro}}\leq \frac{\sqrt d}\tau\|\Sigma_2^{-1}-\Sigma_1^{-1}\|_{\Sigma_1^{-1}},\eeq where $\tau>0$ is such that $\Sigma_2^{-1}\succeq \tau\Sigma_1^{-1}$. Here, $a\asymp b$ means $ca\leq b\leq Ca$ for absolute constants $0<c<C$. The $\sqrt d$ in this inequality arises by upper bounding the Frobenius norm by an operator norm. In our case, we apply this result with $\Sigma_1^{-1}=n\nabla^2\nllinfty(\ground)=n\F$ and $\Sigma_2^{-1}=n\nabla^2\nllinfty(\MLE)$. Now on the event $\bar E(s,\epsilon_2)$,  $\tau$ can be taken to be an absolute constant, and we have
$$
\|\Sigma_2^{-1}-\Sigma_1^{-1}\|_{\Sigma_1^{-1}}=\|\nabla^2\nllinfty(\MLE)-\nabla^2\nllinfty(\ground)\|_{\F}\leq\bdelta_3(s)s\sqrt{d/n},
$$ by the definition of $\bar E(s,\epsilon_2)$. Thus overall we get
\beq\label{TVd}
\TV(\gamma_\nll,\bvmgam)\les \sqrt d\|\nabla^2\nllinfty(\MLE)-\nabla^2\nllinfty(\ground)\|_{\F} \leq \sqrt d\l(\bdelta_3(s)s\sqrt{d/n}\r).
\eeq 
We see that the $d/\sqrt n$ comes from the $\sqrt d$ prefactor and the fact that $\MLE$ is at distance $\sqrt{d/n}$ away from $\ground$. Whether or not this upper bound is tight depends on a number of factors, such as whether upper bounding the Frobenius norm by $\sqrt d$ times the operator norm is tight. We leave this question to future work.

Finally, we consider the issue of deriving a lower bound on $\TV(\pi_\nll,\bvmgam)$. A natural approach would be to leverage the lower bound on the LA error $\TV(\pi_\nll,\gamma_\nll)$ derived in~\cite{katskew} via the following inequality:
$$\TV(\pi_\nll,\bvmgam)\geq\TV(\pi_\nll,\gamma_\nll)-\TV(\gamma_\nll,\bvmgam).$$ However, it is not currently possible to use such a technique. This is because the lower bound on $\TV(\pi_\nll,\gamma_\nll)$ and the upper bound on $\TV(\gamma_\nll,\bvmgam)$ are both of the order $d/\sqrt n$. It may be possible to directly argue that $\TV(\pi_\nll,\bvmgam)\gtrsim \TV(\pi_\nll,\gamma_\nll)$, which would allow us to take advantage of the Laplace lower bound. We leave this question to future work.\\

\section{BvM for generalized linear models and exponential families}\label{sec:log}
In this section, we prove the BvM for generalized linear models (GLMs), which encompasses the i.i.d. exponential family setting. We describe the set-up in Section~\ref{subsec:log:setup} and prove the BvM in Section~\ref{bvm:gen:glm}; see also the end of the latter section for a comparison with the literature. In Section~\ref{subsec:logconc}, we specialize the general result to the case of a log-concave exponential family. In Section~\ref{subsec:logreg}, we specialize the result to the case of logistic regression with Gaussian design.
\subsection{Set-up}\label{subsec:log:setup}The general setting we consider is a GLM, in which we observe feature-label pairs $(X_i, Y_i)$, $i=1,\dots,n$. Here, $X_i\in\R^{d\times k}$ is a matrix whose columns constitute $k$ feature vectors in $\R^d$ associated to sample $i$. The vector $Y_i\in\mathcal Y\subseteq\R^k$ is the $k$-variate ``label" corresponding to $X_i$. Given $X_i$ and a parameter vector $\theta\in\R^d$, the model for the distribution of $Y_i$ is
\beq\label{modelb}Y_i\mid X_i \sim p(\cdot\mid X_i^\tr\theta)d\mu,\qquad i=1,\dots,n\eeq for some base measure $\mu$ supported on $\mathcal Y$. Here, $p$ is a $k$-parameter full, minimal, regular exponential family of the form
\begin{align}
p(y\mid\omega) &= \e\l(\omega^\tr y - \psi(\omega)\r),\label{pGLM}\\
\psi(\omega)&=\log\int_{\mathcal Y} e^{\omega^\tr y}d\mu(y).\label{def-psi}\end{align}
\begin{example}[Reduction to i.i.d. exponential family]
As an important special case, if $X_i=I_d$ for all $i=1,\dots, n$ (so in particular, $k=d$) then the GLM reduces to the following i.i.d. exponential family setting:
\beq\label{model-expfam}
Y_i\iid p(\cdot\mid \theta)d\mu,\qquad i=1,\dots,n,
\eeq where $p$ is as in~\eqref{pGLM}.
\end{example}
Because the exponential family is full, minimal, and regular, the domain $\Omega$, given by 
\beq\label{omega}
\Omega=\l\{\omega\in\R^k\; : \; \int_{\mathcal Y} e^{\omega^\tr y}\mu(dy)<\infty\r\},
\eeq 
is known to be convex and open. Moreover, $\psi$ from~\eqref{def-psi} is strictly convex and infinitely differentiable in $\Omega$~\cite[Chapters 7,8]{barndorff2014information}. The domain $\Omega$ gives rise to a domain of valid $\theta$ values, which depends on the chosen features $X_i$:
\beq\label{Theta-def}\Theta = \{\theta\in\R^d\; : \; X_i^\tr\theta\in\Omega\quad\forall i=1,\dots,n\}.\eeq That $\Omega$ is open and convex in $\R^k$ implies $\Theta$ is open and convex in $\R^d$. 
\begin{example}In the i.i.d. exponential family setting, we simply have $\Theta=\Omega\subseteq\R^d$.\end{example}
\begin{example}In Poisson, logistic, and binomial regression we have $\Omega=\R$  and hence $\Theta=\R^d$. Similarly, in multinomial logistic regression we have $\Omega=\R^k$ and hence $\Theta=\R^d$ as well. When $p$ is the exponential distribution, which is a one parameter ($k=1$) exponential family, we have $\Omega=(-\infty,0)$. Thus in this case $\Theta$ is given by the intersection of the $n$ half-spaces $\{\theta\in\R^d\; :X_i^\tr\theta<0\}$.
\end{example}
The model~\eqref{modelb} leads to the following normalized negative log likelihood $\nll$:
\beqs\label{nlldef0}
\nll(\theta)&= \frac1n\sum_{i=1}^n\l[\psi(X_i^\tr \theta) - Y_iX_i^\tr \theta\r].\eeqs The derivatives of $\nll$ are given as follows. Below, we treat $\nabla\psi$ as a column vector in $k$ dimensions.
\beqs\label{nlldef}
\nabla\nll(\theta)&=\frac1n\sum_{i=1}^nX_i\l[\nabla\psi(X_i^\tr \theta)- Y_i\r],\\
\nabla^2\nll(\theta)&=\frac1n\sum_{i=1}^nX_i\nabla^2\psi(X_i^\tr \theta)X_i^\tr,\\
\la\nabla^3\nll(\theta), u^{\otimes3}\ra&=\frac1n\sum_{i=1}^n\la\nabla^3\psi(X_i^\tr\theta), (X_i^\tr u)^{\otimes3}\ra.
\eeqs
We now check Assumption~\ref{A1}. Since $\psi\in C^\infty(\Omega)$ and $\psi$ is convex, we see from~\eqref{nlldef0} that $\nll\in C^\infty(\Theta)$ and $\nll$ is convex with probability 1. Furthermore, we see from~\eqref{nlldef} that $\nabla^2\nll=\nabla^2\nllinfty$ since $\nabla^2\nll$ does not depend on the random $Y_i$. Thus it remains to check $\nabla^2\nll(\ground)\succ0$, which we do in the following lemma.
\begin{lemma}If the linear span of the union of all $kn$ columns of the matrices $X_i$, $i=1,\dots,n$ equals $\R^d$, then $\nabla^2\nll(\theta)\succ0$ for all $\theta\in\Theta$. 
\end{lemma}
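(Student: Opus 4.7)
The plan is to exploit strict convexity of the log-partition function $\psi$ together with the spanning hypothesis on the columns of the $X_i$. Since the exponential family in~\eqref{pGLM} is full, minimal, and regular, $\psi$ is strictly convex on the open set $\Omega$, which means $\nabla^2\psi(\omega)\succ 0$ for every $\omega\in\Omega$. For any $\theta\in\Theta$, the definition~\eqref{Theta-def} ensures $X_i^\tr\theta\in\Omega$ for all $i$, so each Hessian $\nabla^2\psi(X_i^\tr\theta)$ is strictly positive definite.

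Given these ingredients, I would fix $\theta\in\Theta$ and $u\in\R^d$ and compute, using the middle line of~\eqref{nlldef},
\[
u^\tr\nabla^2\nll(\theta)u \;=\; \frac{1}{n}\sum_{i=1}^n (X_i^\tr u)^\tr\nabla^2\psi(X_i^\tr\theta)(X_i^\tr u).
\]
Every summand is nonnegative, so $\nabla^2\nll(\theta)\succeq 0$. Moreover, because each $\nabla^2\psi(X_i^\tr\theta)$ is strictly positive definite, the $i$-th summand vanishes if and only if $X_i^\tr u=0$. Hence $u^\tr\nabla^2\nll(\theta)u=0$ forces $X_i^\tr u=0$ simultaneously for all $i=1,\dots,n$, i.e.\ $u$ is orthogonal to every column of every $X_i$.

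To close the argument I invoke the spanning hypothesis: if the union of all $kn$ columns of the $X_i$ spans $\R^d$, then the only vector orthogonal to every such column is $u=0$. Consequently $u^\tr\nabla^2\nll(\theta)u>0$ for every nonzero $u\in\R^d$, which is the claim $\nabla^2\nll(\theta)\succ 0$.

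There is no real obstacle here; the only subtlety is making sure $X_i^\tr\theta$ is genuinely in $\Omega$ (so that strict convexity of $\psi$ applies), which is precisely the content of the definition of $\Theta$, and noting that strict positive definiteness of each $\nabla^2\psi(X_i^\tr\theta)$ gives the "if and only if" in the vanishing condition, not merely a one-sided implication.
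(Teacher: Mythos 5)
Your proof is correct and follows essentially the same route as the paper: both reduce $u^\tr\nabla^2\nll(\theta)u=0$ to $X_i^\tr u=0$ for all $i$ via strict positive definiteness of $\nabla^2\psi(X_i^\tr\theta)$ on $\Omega$, and then invoke the spanning hypothesis to force $u=0$. The only cosmetic difference is that the paper lower-bounds the quadratic form by $\min_i\lambda_{\min}(\nabla^2\psi(X_i^\tr\theta))\cdot\frac1n\sum_i|X_i^\tr u|^2$ whereas you argue summand by summand; the content is identical.
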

\begin{proof}
It suffices to show $u^\tr\nabla^2\nll(\theta)u=0$ implies $u=0$. We have
\beqs
u^\tr\nabla^2\nll(\theta)u=\frac1n\sum_{i=1}^nu^\tr X_i\nabla^2\psi(X_i^\tr \theta)X_i^\tr u \geq \min_{i=1,\dots,n}\lambda_{\min}\l(\nabla^2\psi(X_i^\tr \theta)\r)\frac1n\sum_{i=1}^n|X_i^\tr u|^2.
\eeqs Since $\psi$ is strictly convex, we have that $\lambda_{\min}\l(\nabla^2\psi(X_i^\tr \theta)\r)>0$ for all $i$. Hence $u^\tr\nabla^2\nll(\theta)u=0$ implies $X_i^\tr u=0$ for all $i$. This implies $u$ is orthogonal to each column of $X_i$ for all $i=1,\dots, n$. But if the span of the columns is $\R^d$, then $u$ must be zero.
\end{proof}
\subsection{BvM for GLMs}\label{bvm:gen:glm}
In this section, we prove the BvM in the setting described above. Note from~\eqref{nlldef} that the random $Y_i$ appears only in the first derivative $\nabla\nll$. Thus $\nll-\nllinfty$ is linear. As discussed below Lemma~\ref{prop:lapMLE}, this structure leads to some simplifications. In particular, we can apply the more specialized Lemma~\ref{prop:lapMLE:SL}, with $\bdelta_3$ as in~\eqref{bdel3-SL}. It remains only to bound from below the probability of the event $E_1(s)$. First, we record the specific form of the Fisher information matrix $\F$ and $\bdelta_3$ for our model. We have
\beq\label{FdefGLM}
 \F=\nabla^2\nllinfty(\ground) =\frac1n\sum_{i=1}^nX_i\nabla^2\psi(X_i^\tr \ground)X_i^\tr 
\eeq and
\beqs\label{bdelbd}
\bdelta_3(s)&=  \sup_{\theta\in\bU(s),\,u\neq0}\;\frac{\frac1n\sum_{i=1}^n\lla\nabla^3\psi(X_i^\tr \theta),\,(X_i^{\tr}u)^{\otimes3}\rra}{\l(\frac1n\sum_{i=1}^n\lla\nabla^2\psi(X_i^\tr \ground),\,(X_i^{T}u)^{\otimes2}\rra\r)^{3/2}}.
\eeqs 
\begin{example}[Key quantities for i.i.d. exponential family setting]
In the i.i.d. exponential family setup, $\F$ and $\bdelta_3$ reduce to the following: 
\beqs\label{F-bdel3-expfam}
\F=\nabla^2\psi(\ground), \qquad \bdelta_3(s)= \sup_{\theta\in\bU(s),\,u\neq0}\frac{\lla\nabla^3\psi(\theta),\,u^{\otimes3}\rra}{\lla\nabla^2\psi(\ground),u^{\otimes2}\rra^{3/2}}.\eeqs
\end{example}

 The following lemma bounds the probability of $E_1(s)$. 
\begin{lemma}\label{bern-gauss}
Let 
$Y_i\mid X_i\sim p(\cdot\mid X_i^\tr\ground)$. Suppose~\eqref{bUcond} is satisfied. Then the event $E_1(s)$
has probability at least $1-\exp(-s^2d/10)$.
\end{lemma}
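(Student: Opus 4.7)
My plan is to bound $\|\nabla\nll(\ground)\|_\F = |\F^{-1/2}\nabla\nll(\ground)|$ by first deriving a sub-Gaussian tail for one-dimensional projections $v^\tr \nabla\nll(\ground)$ with $|v|_\F=1$, then upgrading to a uniform bound via an $\varepsilon$-net on the unit $\F$-sphere. The score is automatically centered and isotropic at the right scale: under $Y_i\mid X_i\sim p(\cdot\mid X_i^\tr\ground)$, the exponential family identity $\E[Y_i\mid X_i]=\nabla\psi(X_i^\tr\ground)$ gives $\E\nabla\nll(\ground)=0$, and a direct calculation yields $\Cov(\nabla\nll(\ground))=\F/n$. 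Thus $|\F^{-1/2}\nabla\nll(\ground)|$ is of expected order $\sqrt{d/n}$, matching the target $s\sqrt{d/n}$.

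First I will compute the log-MGF of $v^\tr\nabla\nll(\ground)$ exactly. Using independence of the $Y_i$ and the formula $\E e^{\xi^\tr Y}=e^{\psi(\omega+\xi)-\psi(\omega)}$ under $Y\sim p(\cdot\mid\omega)$, I obtain
\beqs
\Lambda_v(\lambda) := \log\E e^{\lambda v^\tr\nabla\nll(\ground)} = \sum_{i=1}^{n}\bigl[\psi(X_i^\tr\ground-(\lambda/n)X_i^\tr v)-\psi(X_i^\tr\ground)+(\lambda/n)(X_i^\tr v)^\tr\nabla\psi(X_i^\tr\ground)\bigr].
\eeqs
Each summand is a Bregman divergence of $\psi$, so $\Lambda_v(0)=\Lambda_v'(0)=0$, and the chain rule combined with \eqref{FdefGLM} gives $\Lambda_v''(0)=v^\tr\F v/n=1/n$ and $\Lambda_v'''(\lambda)=-n^{-2}\la\nabla^3\nllinfty(\ground-(\lambda/n)v),v^{\otimes 3}\ra$. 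For $|\lambda|\leq 2s\sqrt{nd}$, the point $\ground-(\lambda/n)v$ lies in $\bU(2s)$, so the definition of $\bdelta_3(2s)$ from \eqref{bdel3-SL} yields $|\Lambda_v'''(\lambda)|\leq\bdelta_3(2s)/n^2$. Integrating twice then produces the cubic-corrected Gaussian MGF bound
\beq
\Lambda_v(\lambda)\leq\frac{\lambda^2}{2n}+\frac{\bdelta_3(2s)|\lambda|^3}{6n^2},\qquad |\lambda|\leq 2s\sqrt{nd}.
\eeq
The hypothesis $2s\bdelta_3(2s)\sqrt{d/n}\leq 1/4$ from \eqref{bUcond} confines the cubic correction to a small fraction of the quadratic term throughout this range, so Chernoff's inequality (with $\lambda$ slightly less than $nt$) gives a sub-Gaussian tail $P(v^\tr\nabla\nll(\ground)>t)\leq\exp(-cnt^2)$ valid for $t\lesssim s\sqrt{d/n}$, with $c$ an absolute constant close to $1/2$.

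Finally I will promote this to a uniform bound. Since $\|\nabla\nll(\ground)\|_\F=\sup_{|v|_\F=1}v^\tr\nabla\nll(\ground)$, I will cover the unit $\F$-sphere by a $\delta$-net $N$ of cardinality at most $(1+2/\delta)^d$, apply a union bound to the sub-Gaussian tail over $N$, and absorb the residual slack via the standard inequality $\sup_{|v|_\F=1}v^\tr x\leq(1-\delta)^{-1}\sup_{v\in N}v^\tr x$. Choosing a fixed $\delta$ slightly above $1/2$ produces an overall bound of the form $\exp\bigl(d\log(1+2/\delta)-c'(1-\delta)^2 s^2 d\bigr)$, which for $s\geq 12$ is driven to at most $\exp(-s^2 d/10)$ by the quadratic-in-$s$ exponent. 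The main obstacle will be tracking constants carefully: the cubic MGF correction (controlled by \eqref{bUcond}), the sub-Gaussian constant emerging from the Chernoff optimization, and the $d\log(1+2/\delta)$ entropy loss of the net must compose to leave the target rate $s^2 d/10$ precisely at the stated universal threshold $s\geq 12$. A secondary technical point is ensuring the Chernoff-optimal $\lambda$ remains inside the range $|\lambda|\leq 2s\sqrt{nd}$ on which the $\bdelta_3(2s)$ bound applies---which is exactly why $E_3$ in \eqref{events} was inflated to the neighborhood $\bU(2s)$ rather than $\bU(s)$.
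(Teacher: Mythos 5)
Your proposal is correct and follows essentially the same route as the paper's proof: both compute the exact moment generating function of a one-dimensional projection of the score via the exponential-family identity, Taylor-expand the resulting cumulant function to third order, control the cubic term by $\bdelta_3$ using the condition~\eqref{bUcond} (which is precisely what guarantees the relevant points stay in $\bU(2s)\subset\Theta$ so the MGF exists), and then pass to the supremum over the unit $\F$-sphere by a half-net union bound with cardinality $5^d$. The only cosmetic differences are that the paper fixes the Chernoff parameter at $\tau=(s/2)\sqrt{d/n}$ and writes the third-order remainder in Lagrange form rather than by integrating $\Lambda_v'''$, and the final arithmetic with $s\geq12$ closes the bound at $e^{-s^2d/10}$ exactly as you anticipate.
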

Let us give some intuition for this result. Both for GLMs and in fact for much more general settings, it holds 
\beq\label{E-var}\E_{Y^n\sim P^n_\ground}[\nabla\nll(\ground)]=\nabla\nllinfty(\ground)=0,\qquad \Var_{Y^n\sim P^n_\ground}[\nabla\nll(\ground)]= \frac1n\F.\eeq We have used the more general notation from Section~\ref{subsec:bayes}. In the setting of GLMs, we have $Y^n=(Y_1, \dots, Y_n)$, and $P^n_\ground=\otimes_{i=1}^np(\cdot\mid X_i^\tr\ground)d\mu$. Using~\eqref{E-var}, we find that
\beq
\sqrt n\|\nabla\nll(\ground)\|_{\F} = |(\F/n)^{-1/2}\nabla\nll(\ground)| = \l|\Var(\nabla\nll(\ground))^{-1/2}\l(\nabla\nll(\ground)-\E\nabla\nll(\ground)\r)\r|,
\eeq
where the expectation and variance are with respect to $P^n_\ground$. Now, recalling~\eqref{nlldef}, we see that $\nabla\nll(\ground)$ is given by an average of $n$ independent random variables. Therefore, the Central Limit Theorem suggests $Z_n:=\Var(\nabla\nll(\ground))^{-1/2}\l(\nabla\nll(\ground)-\E\nabla\nll(\ground)\r)$ is approximately standard normal. Moreover, we can write the event $E_1(s)$ as $E_1(s)=\{|Z_n|\leq s\sqrt d\}$. Thus the probability of $E_1(s)^c$ should scale as a Gaussian tail probability and indeed, Lemma~\ref{bern-gauss} shows that $\mathbb P(|Z_n|\geq s\sqrt d)\leq e^{-s^2d/10}$.

The asymptotic distribution of the norm of a standardized sample mean was first considered in~\cite{portnoy1988asymptotic}. Later,~\cite[Section F.3]{spokoiny2023inexact} proved nonasymptotic tail bounds on the norm of a sub-Gaussian or sub-exponential random vector. To be self-contained we give our own proof of Lemma~\ref{bern-gauss}. The proof does not actually rely on the CLT; we use an $\epsilon$-net argument and apply Chernoff's inequality. \\

We now conclude the BvM by a direct application of Lemmas~\ref{bern-gauss} and~\ref{prop:lapMLE:SL}.
\begin{prop}[Non-asymptotic BvM for GLMs]\label{lma:bvmlog}Suppose the columns of the matrices $X_i$, $i=1,\dots,n$ span $\R^d$, let $\Theta$ be as in~\eqref{Theta-def}, $\F$ as in~\eqref{FdefGLM}, and $\bdelta_3$ be as in~\eqref{bdelbd}. Suppose that for some $s\geq12$, the conditions from~\eqref{bUcond} are satisfied, and $\bdelta_{01}(2s)\leq\sqrt{nd}/6$. Then on an event of probability at least $1-\exp(-ds^2/10)$ with respect to the distribution of the $Y_i$, we have
\beq\label{lma:bvmlog:eq}
\TV(\pi_v,\bvmgam) \les \frac{\bdelta_{01}(2s)}{\sqrt{ n}}+s\bdelta_3(2s)\frac{d}{\sqrt n}+e^{d(\Cpri^*-(s/12)^2)}.
\eeq 
\end{prop}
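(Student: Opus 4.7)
The plan is to combine the stochastically linear preliminary BvM estimate (Lemma~\ref{prop:lapMLE:SL}) with the concentration bound on the score (Lemma~\ref{bern-gauss}), and then glue the pieces together via the triangle inequality. The GLM structure~\eqref{nlldef} shows that only $\nabla\nll$ depends on the random $Y_i$, so $\nll-\nllinfty$ is affine in $\theta$. This is precisely the stochastic linearity hypothesis of Lemma~\ref{prop:lapMLE:SL}, which means that the events $E_2(0)$ and $E_3(2s)$ hold with probability $1$ and the random $\delta_3$ in Theorem~\ref{corr:main} is dominated by the deterministic quantity $\bdelta_3$ defined in~\eqref{bdelbd} (and consistent with~\eqref{bdel3-SL}, since $\nabla^2\nll=\nabla^2\nllinfty$). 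In particular, the only stochastic event we still need to control is $E_1(s)$.

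To that end, I invoke Lemma~\ref{bern-gauss}: under the standing hypothesis~\eqref{bUcond}, the event $E_1(s)=\{\|\nabla\nll(\ground)\|_\F\leq s\sqrt{d/n}\}$ has probability at least $1-\exp(-s^2d/10)$. On this event, the conclusions of Lemma~\ref{prop:lapMLE:SL} apply verbatim: the MLE $\MLE$ exists, is unique, and satisfies $|\MLE-\ground|_\F\leq s\sqrt{d/n}$; the Laplace-to-BvM bounds~\eqref{pretv1-SL} and~\eqref{pretv3-SL} hold,
\[
\TV(\pi_\nll,\gamma_\nll)\les \bdelta_3(2s)\frac{d}{\sqrt n}+e^{-ds^2/36},\qquad \TV(\gamma_\nll,\bvmgam)\les s\bdelta_3(s)\frac{d}{\sqrt n};
\]
and, using the additional hypothesis $\bdelta_{01}(2s)\le\sqrt{nd}/6$, the prior-removal bound~\eqref{pretv2} gives
\[
\TV(\pi_v,\pi_\nll)\les \frac{\bdelta_{01}(2s)}{\sqrt n}+ e^{d[\Cpri^*-(s/12)^2]}.
\]

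It remains to assemble the final bound. The triangle inequality $\TV(\pi_v,\bvmgam)\leq \TV(\pi_v,\pi_\nll)+\TV(\pi_\nll,\gamma_\nll)+\TV(\gamma_\nll,\bvmgam)$ combined with the three displays above yields
\[
\TV(\pi_v,\bvmgam)\les \frac{\bdelta_{01}(2s)}{\sqrt n}+s\bdelta_3(2s)\frac{d}{\sqrt n}+e^{d[\Cpri^*-(s/12)^2]}+e^{-ds^2/36},
\]
where I have used monotonicity $\bdelta_3(s)\leq\bdelta_3(2s)$ and absorbed constants. The exponential term $e^{-ds^2/36}$ is dominated by (up to an absolute constant) the other tail term $e^{d[\Cpri^*-(s/12)^2]}$ provided $\Cpri^*\geq 0$, which always holds by its definition, so it can be folded in, giving~\eqref{lma:bvmlog:eq}.

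There is no real obstacle here: the proof is essentially a bookkeeping exercise that records which of the probabilistic events needed in Lemma~\ref{prop:lapMLE} are automatic under stochastic linearity, and then invokes Lemma~\ref{bern-gauss} for the single remaining event $E_1(s)$. The only mild subtlety is verifying that the conditions~\eqref{bUcond} suffice for both lemmas simultaneously and that the $s$ in $\bdelta_3(2s)$ and in $\bdelta_{01}(2s)$ is the same parameter that governs the probability $1-e^{-s^2d/10}$, which is ensured because Lemmas~\ref{prop:lapMLE:SL} and~\ref{bern-gauss} both take $s$ as their localization radius.
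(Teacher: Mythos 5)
Your proof is correct and matches the paper's own argument, which likewise concludes the proposition ``by a direct application of Lemmas~\ref{bern-gauss} and~\ref{prop:lapMLE:SL}'' followed by the triangle inequality. Your extra bookkeeping (absorbing $e^{-ds^2/36}$ using $\Cpri^*\geq0$ and the monotonicity of $\bdelta_3$) is exactly what is implicitly needed and is valid.
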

We now prove the traditional asymptotic BvM. We assume that $d=d_n$ and $k=k_n$ may change with $n$, and for each $n$ we are given $n$ matrices $X_{i,n}\in\R^{d_n\times k_n}$, $i=1,\dots,n$, and a sequence of functions $\psi=\psi_n$ in the exponential family model~\eqref{pGLM}. This induces the following $n$-dependent quantities: $\Omega=\Omega_n\subseteq\R^{k_n},\Theta=\Theta_n\subseteq\R^{d_n}$, $\F=F^*_n$, $\bdelta_3=\bdelta_{3n}$, defined as in Section~\ref{subsec:log:setup}. In the definition~\eqref{bdelbd} of $\bdelta_3$, note that the local neighborhood $\bU=\bU_n$ also changes with $n$. Also, write $\Cpri=M_{0n}^*$ and $\bdelta_{01}=\bdelta_{01n}$, which are as in Definition~\ref{def:Ugen} for each $n$. Finally, we write $\pi_{v_n},\bvmgam_n$ to emphasize the dependence on $n$ of the posterior and the Gaussian in the BvM.

To obtain the asymptotic BvM from the bound~\eqref{lma:bvmlog:eq}, we take the $n\to\infty$ limit for each fixed $s$, and then take $s\to\infty$. Note that if $d_n\to\infty$ as $n\to\infty$, then the second step of taking $s\to\infty$ is not necessary. 
\begin{corollary}[Asymptotic BvM for GLMs]\label{corr:bvmasymexp}
Suppose the linear span of the $k_nn$ columns of the matrices $X_{i,n}$, $i=1,\dots,n$ equals $\R^{d_n}$, and that $M_{0n}^*=\mathcal O(1)$ as $n\to\infty$. Also, for each fixed $s\geq0$, suppose the following hold: (1) the neighborhood $\bU_n(s)$ is contained in $\Theta_n$ when $n$ is large enough, (2) $\bdelta_{01n}(s)=o(\sqrt n)$, and (3)
$$\bdelta_{3n}(s)\frac{d_n}{\sqrt n} = o(1).$$ Then $\TV(\pi_{v_n}, \bvmgam_n)\to0$ with probability tending to 1 as $n\to\infty$.
\end{corollary}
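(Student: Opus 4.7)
The plan is to deduce the asymptotic statement directly from the nonasymptotic bound in Proposition~\ref{lma:bvmlog} by a standard $\epsilon$-$\delta$ argument: fix a tolerance $\eta>0$ and a probability tolerance $\delta>0$, then choose a \emph{single} threshold $s$ (large, but independent of $n$) so that both the TV upper bound~\eqref{lma:bvmlog:eq} at parameter $s$ is smaller than $\eta$ and the complementary probability $e^{-d_n s^2/10}$ is smaller than $\delta$ for all sufficiently large $n$. This reduces the corollary entirely to the already-established Proposition~\ref{lma:bvmlog}; no new probabilistic estimates are required.

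Concretely, using the hypothesis $M_{0n}^*=\mathcal O(1)$, I would fix an absolute constant $M$ with $M_{0n}^*\le M$ for all $n$ and pick $s\ge 12$ large enough that $(s/12)^2 > M + \log(3C/\eta)$ and $e^{-s^2/10}<\delta$, where $C$ denotes the absolute constant hidden by $\les$ in~\eqref{lma:bvmlog:eq}. Next I would verify that, at this fixed $s$, the hypotheses of Proposition~\ref{lma:bvmlog} hold once $n$ is large: the containment $\bU_n(2s)\subset\Theta_n$ is immediate from (1); the inequality $2s\,\bdelta_{3n}(2s)\sqrt{d_n/n}\le 1/4$ follows from (3) applied at the fixed parameter $2s$; and $\bdelta_{01n}(2s)\le\sqrt{n d_n}/6$ follows from (2), since $\bdelta_{01n}(2s)=o(\sqrt n)$ and $d_n\ge 1$.

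With these ingredients in place, Proposition~\ref{lma:bvmlog} yields, on an event of probability at least $1-e^{-d_n s^2/10}\ge 1-\delta$, the bound
\[
\TV(\pi_{v_n},\bvmgam_n)\le C\left(\frac{\bdelta_{01n}(2s)}{\sqrt n}+s\,\bdelta_{3n}(2s)\frac{d_n}{\sqrt n}+e^{d_n(M_{0n}^*-(s/12)^2)}\right).
\]
The first two terms vanish as $n\to\infty$ by assumptions (2) and (3), so each is at most $\eta/(3C)$ once $n$ is large. For the third term, the choice of $s$ together with $d_n\ge 1$ gives $e^{d_n(M_{0n}^*-(s/12)^2)}\le e^{M-(s/12)^2}<\eta/(3C)$ uniformly in $n$. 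Combining, $\TV(\pi_{v_n},\bvmgam_n)<\eta$ on an event of probability at least $1-\delta$ for all sufficiently large $n$, which is the claimed convergence in probability.

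The only mildly delicate point is controlling the third term: because $d_n$ appears in the exponent, $s$ must be chosen so that the base $M_{0n}^*-(s/12)^2$ is strictly negative and bounded away from zero \emph{uniformly} in $n$. The hypothesis $M_{0n}^*=\mathcal O(1)$ makes this routine, so there is no substantive obstacle; the proof is essentially bookkeeping to pass from the nonasymptotic Proposition~\ref{lma:bvmlog} to the stated asymptotic conclusion.
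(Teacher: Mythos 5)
Your proposal is correct and follows essentially the same route the paper indicates (the paper's proof is just the one-sentence remark before the corollary: take $n\to\infty$ for each fixed $s$, then $s\to\infty$); your version simply makes the diagonal argument quantitative by fixing a single large $s$ up front, and you correctly apply hypotheses (1)--(3) at the parameter $2s$ to verify~\eqref{bUcond} and the condition on $\bdelta_{01n}$. No gaps.
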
 
This result is as explicit as we can hope for in the very general setting of an arbitrary GLM. Given a particular GLM of interest, it remains to bound the quantity $\bdelta_3$ from~\eqref{bdelbd}. In the following two subsections, we consider two special cases in which $\bdelta_3$ can either be simplified or explicitly bounded. In the first case, a log-concave exponential family, we show that the third derivative of $\psi$ can be bounded in terms of the second derivative. This leads to a simple bound on $\bdelta_3$ avoiding operator norms of $d\times d\times d$ tensors. In the second case, logistic regression with Gaussian design, we show the numerator and denominator in~\eqref{bdelbd} are upper- and lower-bounded by absolute constants, respectively. This holds with high probability over the random design.\\

Let us compare Proposition~\ref{lma:bvmlog} to prior work. The work~\cite{spokoiny2013bernstein} proves a BvM for generalized linear models in $k=1$, though the conditions on the design under which the BvM holds are left unspecified. The special case of exponential families was studied in~\cite{ghosal2000} and~\cite{belloni2014posterior}. As we show in Appendix~\ref{subsec:bell-spok}, an explicit TV bound can be recovered from the proof of~\cite{belloni2014posterior}, and cast as a product of a model dependent and universal factor. The work~\cite{spokoiny2013bernstein} does (essentially) state an explicit TV bound, which can also be brought into this same form. Once this has been done, we see that our BvM tightens the dimension dependence of the universal factor from $\sqrt{d^3/n}$ to $\sqrt{d^2/n}$, while our model-dependent factor $\bdelta_3$ is very analogous to that of~\cite{spokoiny2013bernstein} and~\cite{belloni2014posterior}

The work~\cite{belloni2014posterior} in turn improves on~\cite{ghosal2000} by removing a $\log d$ factor.

\subsection{Application to log concave exponential families}\label{subsec:logconc}
Similarly to~\cite{belloni2014posterior}, we now consider the case of a log concave exponential family. Recall that in the i.i.d. exponential family setting, we have $k=d$ and $X_i=I_d$ for all $i=1,\dots,n$, so that 
\beq\label{modelb-expfam} Y_i\iid p(\cdot\mid\theta)d\mu,\qquad i=1,\dots, n.\eeq Furthermore, using standard properties of exponential families, we can write the derivatives of $\psi$ in terms of moments of the distribution~\eqref{pGLM}. In particular,
\beqs\label{mom-form}
\nabla\psi(\theta)&=\E_\theta[Y_1],\\
\nabla^2\psi(\theta)&=\E_\theta\l[(Y_1-\E_\theta[Y_1])(Y_1-\E_\theta[Y_1])^\tr\r]=\Var_\theta(Y_1),\\
\nabla^3\psi(\theta) &=\E_{\theta}\l[(Y_1-\E_\theta[Y_1])^{\otimes3}\r].
\eeqs Here, $\E_\theta[\cdot]$ is shorthand for $\E_{Y_1\sim p(\cdot\mid \theta)}[\cdot]$, and similarly for $\Var_\theta(\cdot)$.

Let us now consider the key quantity $\bdelta_3$ in the BvM, which for exponential families takes the form given in~\eqref{F-bdel3-expfam}. Using the above relationship between derivatives of $\psi$ and centered moments of $Y_1$, we can write $\bdelta_3$ as follows:
\beq\label{F-bdel3-expfam-2}
\bdelta_3(s)= \sup_{\theta\in\bU(s),\,u\neq0}\;\frac{\E_\theta\l[\l(u^\tr Y-u^\tr \E_\theta[Y]\r)^3\r]}{\Var_\ground(u^\tr Y)^{3/2}}.
\eeq

Thus we see that we need to bound a ratio of third and second moments of the distribution $p(\cdot|\theta)$ from~\eqref{pGLM}. When this distribution is log concave, it is well known that the third moment can be bounded above in terms of the second moment. This allows us to prove the following lemma.
\begin{lemma}\label{lma:bdel23}Let $F(\theta)=\Var_\theta(Y)$ and define $$\bdelta_2(s)=\sup_{\theta\in\bU(s)}\|F(\ground)^{-1/2}F(\theta)F(\ground)^{-1/2}\|=\sup_{\theta\in\bU(s)}\|F(\theta)\|_\F,$$ where $F(\theta)=\Var_\theta(Y)=\nabla^2\psi(\theta)$. 
If the base distribution $\mu$ from~\eqref{modelb-expfam} has a log concave density, then
$$\bdelta_3(s)\leq C_{23}\bdelta_2(s)^{3/2}$$ for some absolute constant $C_{23}$, where $\bdelta_3$ is defined as in~\eqref{F-bdel3-expfam-2}. 
\end{lemma}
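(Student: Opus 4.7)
The plan is to reduce $\bdelta_3(s)$ to a one-dimensional moment comparison for log-concave distributions, and then absorb the change of base point from $\theta$ to $\ground$ into a factor of $\bdelta_2(s)$. The key observation is that the supremum in~\eqref{F-bdel3-expfam-2} is taken over third central moments of linear functionals $u^\tr Y$ under $P_\theta$, while the denominator is a variance at $\ground$.

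First I would note that because the base measure $\mu$ has a log-concave Lebesgue density, say $m$, the full Lebesgue density
$$y\;\mapsto\;\exp\l(\theta^\tr y-\psi(\theta)\r)\,m(y)$$
of $Y$ under $P_\theta$ is log-concave for every $\theta\in\Omega$, since its logarithm is the sum of a linear function and the concave function $\log m$. By the Prékopa--Leindler inequality, any linear projection $u^\tr Y$ with $u\neq0$ is then a one-dimensional log-concave random variable under $P_\theta$.

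Second, I would invoke the classical moment-comparison inequality for log-concave random variables (a consequence of Borell's lemma, stating that all $L^p$ norms of a log-concave random variable are comparable): there is an absolute constant $C_1$ such that for every one-dimensional log-concave $Z$,
$$\E\bigl[\,|Z-\E Z|^3\,\bigr]\;\le\;C_1\,\bigl(\E\l[(Z-\E Z)^2\r]\bigr)^{3/2}.$$
Applied with $Z=u^\tr Y$ under $P_\theta$, this gives
$$\bigl|\E_\theta\l[(u^\tr Y-u^\tr\E_\theta Y)^3\r]\bigr|\;\le\;C_1\,\bigl(\Var_\theta(u^\tr Y)\bigr)^{3/2}.$$

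Third, I would compare the variances at $\theta$ and $\ground$. Since $F(\theta)=\nabla^2\psi(\theta)=\Var_\theta(Y)$ and $\F=F(\ground)$, we have $\Var_\theta(u^\tr Y)=u^\tr F(\theta)u$ and $\Var_\ground(u^\tr Y)=u^\tr F(\ground)u$. By the definition of $\bdelta_2(s)$, for every $\theta\in\bU(s)$ and every $u\in\R^d$,
$$u^\tr F(\theta)u\;\le\;\bdelta_2(s)\,u^\tr F(\ground)u,$$
and hence $\Var_\theta(u^\tr Y)\le\bdelta_2(s)\,\Var_\ground(u^\tr Y)$. Combining this with the previous display, dividing by $\Var_\ground(u^\tr Y)^{3/2}$, and taking the supremum over $\theta\in\bU(s)$ and $u\neq0$, would give $\bdelta_3(s)\le C_1\,\bdelta_2(s)^{3/2}$, as required, with $C_{23}=C_1$.

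The only genuinely analytic input is the Borell-type moment comparison in the second step; this is standard material on log-concave measures, so the main ``obstacle'' is simply to quote or reprove that inequality with the correct absolute constant, which is routine. Everything else is bookkeeping involving only the definitions of $\bdelta_3$, $\bdelta_2$, and $\F$, together with the exponential-family identities~\eqref{mom-form}.
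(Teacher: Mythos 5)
Your proof is correct and follows essentially the same route as the paper's: establish log-concavity of the one-dimensional marginals $u^\tr Y$ under $P_\theta$, apply the standard third-moment-versus-variance comparison for log-concave random variables (the paper cites Theorem 5.22 of Lov\'asz--Vempala where you cite Borell's lemma; these are the same fact), and then absorb the change of base point via $F(\theta)\preceq\bdelta_2(s)F(\ground)$, which the paper carries out implicitly by normalizing $|u|_\F=1$. No gaps.
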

\begin{proof}If $\mu$ has a log concave density then so does $p(\cdot|\theta)$, for each $\theta\in\Theta$. Now, log concavity is preserved under affine transformations ~\cite[Section 3.1.1]{saumard2014log}. Therefore if $Y\sim p(\cdot\mid\theta)$ then $u^\tr Y-\E_\theta[u^\tr Y]$ also has a log concave distribution for all $u\in\R^d$. 
\cite[Theorem 5.22]{lovasz2007geometry} now gives that
$$\E_\theta\l[\l|u^\tr Y-\E_\theta[u^\tr Y]\r|^3\r]\leq C_{23}\Var_\theta(u^\tr Y)^{3/2}=C_{23}\l(u^\tr F(\theta)u\r)^{3/2}.$$ Taking the supremum over all $u$ such that $|u|_\F^2=1$ and $\theta\in\bU(s)$ concludes the proof.
\end{proof}
Using this result in Proposition~\ref{lma:bvmlog} leads to the following simplified BvM. 
\begin{prop}[Nonasymptotic BvM for log concave exponential families]\label{corr:bvmexp:logc}Suppose the base density $h$ of the exponential family~\eqref{pGLM} is log concave. Suppose that for some $s\geq12$ it holds 
\beqs\label{bdel2assump}\bdelta_2(2s)=&\sup_{\theta\in\bU(2s)}\|F(\ground)^{-1/2}F(\theta)F(\ground)^{-1/2}\|\leq 3/2,\\
s\sqrt{d/n}\leq &(16C_{23})^{-1},\qquad \bdelta_{01}(2s)\leq\sqrt{nd}/6,\qquad \bU(2s)\subset\Theta.\eeqs

 Then on an event of probability at least $1-\exp(-ds^2/10)$ with respect to the distribution of the $Y_i$, we have
\beqs\label{bvm-highprob-exp-logc}
\TV(\pi_v,\bvmgam) \les \frac{\bdelta_{01}(2s)}{\sqrt{ n}}+s\frac{d}{\sqrt n}+e^{d(\Cpri^*-(s/12)^2)}.
\eeqs 
\end{prop}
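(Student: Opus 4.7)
The plan is to obtain this proposition as a direct specialization of Proposition~\ref{lma:bvmlog} to the i.i.d. exponential family setting (so $k=d$ and $X_i=I_d$ for all $i$), where the role of the log concavity hypothesis is precisely to let us bound the third-derivative quantity $\bdelta_3$ by the second-derivative quantity $\bdelta_2$.

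First, I would invoke Lemma~\ref{lma:bdel23}. Since the base density of $\mu$ is log concave, the lemma gives $\bdelta_3(s)\leq C_{23}\bdelta_2(s)^{3/2}$ for an absolute constant $C_{23}$, where $\bdelta_3$ is as in~\eqref{F-bdel3-expfam-2}. Combining with the standing hypothesis $\bdelta_2(2s)\leq 3/2$, this yields the uniform bound $\bdelta_3(2s)\leq C_{23}(3/2)^{3/2}$, i.e.\ an absolute constant independent of $d,n,s$.

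Next, I would verify the hypotheses of Proposition~\ref{lma:bvmlog}. The conditions $s\geq 12$, $\bU(2s)\subset\Theta$, and $\bdelta_{01}(2s)\leq \sqrt{nd}/6$ are assumed directly, and the spanning condition on the $X_i$ is trivial since $X_i=I_d$. The only nontrivial condition from~\eqref{bUcond} is the smallness constraint $2s\bdelta_3(2s)\sqrt{d/n}\leq 1/4$. Using the bound on $\bdelta_3(2s)$ together with the hypothesis $s\sqrt{d/n}\leq(16C_{23})^{-1}$, a one-line check gives
\[
2s\bdelta_3(2s)\sqrt{d/n}\;\leq\; 2\cdot(16C_{23})^{-1}\cdot C_{23}(3/2)^{3/2} \;=\; (3/2)^{3/2}/8 \;<\; 1/4,
\]
as required; indeed the factor $16$ in the hypothesis was chosen exactly so that this inequality holds.

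Finally, I would substitute into the conclusion~\eqref{lma:bvmlog:eq}. The middle term $s\bdelta_3(2s)d/\sqrt n$ collapses to $sd/\sqrt n$ once the absolute constant $C_{23}(3/2)^{3/2}$ is absorbed into the $\les$ notation, while the prior-related terms $\bdelta_{01}(2s)/\sqrt n$ and $e^{d(\Cpri^*-(s/12)^2)}$ pass through unchanged, as does the probability lower bound $1-\exp(-ds^2/10)$. Since every step is either an invocation of an earlier result (Lemma~\ref{lma:bdel23} and Proposition~\ref{lma:bvmlog}) or an elementary arithmetic check, I do not foresee any genuine obstacle; the proposition is essentially a tidy corollary of the general GLM BvM combined with the log-concave moment comparison.
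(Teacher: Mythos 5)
Your proposal is correct and follows essentially the same route as the paper's proof: invoke Lemma~\ref{lma:bdel23} with $\bdelta_2(2s)\leq 3/2$ to get $\bdelta_3(2s)\leq (3/2)^{3/2}C_{23}$, verify the smallness condition $2s\bdelta_3(2s)\sqrt{d/n}\leq 1/4$ from $s\sqrt{d/n}\leq(16C_{23})^{-1}$, and substitute into Proposition~\ref{lma:bvmlog}. The arithmetic checks out (the paper rounds $\bdelta_3(2s)\leq 2C_{23}$ before the same verification), so nothing is missing.
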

\begin{proof}
The assumptions and Lemma~\ref{lma:bdel23} give  $\bdelta_3(2s)\leq (3/2)^{3/2}C_{23} \leq 2C_{23}$. Thus in particular $s\bdelta_3(2s)\sqrt{d/n} \leq 2C_{23}s\sqrt{d/n} \leq 1/8.$ Therefore the conditions in~\eqref{bUcond} are satisfied. Substituting $\bdelta_3(2s)\les1$ in the righthand side of~\eqref{lma:bvmlog:eq} in Proposition~\ref{lma:bvmlog} concludes the proof. \end{proof}

In the above proposition and proof, it may seem like we simply imposed a strong assumption --- that $\bdelta_2(2s)$ is bounded by an absolute constant --- in order to conclude that $\bdelta_3(2s)$ is also bounded by an absolute constant, using the log concavity. However, we claim that the assumption $\bdelta_2(2s)\leq3/2$ is actually \emph{weaker} than the assumption $2s\bdelta_3(2s)\sqrt{d/n}\leq1/4$ we have been using in previous results, including Proposition~\ref{lma:bvmlog}. Indeed, to see this, first note that $\bdelta_2(0)=1$. It then follows by Taylor's theorem that
$$\bdelta_2(2s)\leq 1 + \sup_{\theta\in\bU(2s)}\|\nabla^2\psi(\theta)-\nabla^2\psi(\ground)\|_\F\leq 1 + \bdelta_3(2s)2s\sqrt{d/n}\leq\frac32.$$

Thus, if one prefers to start with the stronger condition $2s\bdelta_3(2s)\sqrt{d/n}\leq1/4$, we have shown that for log concave families,
$$
\bdelta_3(2s)\leq\frac{1}{8s}\sqrt{n/d}\implies \bdelta_3(2s) \leq C=(3/2)^{3/2}C_{23}.
$$

\subsection{Application to logistic regression with random design}\label{subsec:logreg}
We now apply Proposition~\ref{lma:bvmlog} to logistic regression. In this model, $k=1$ and the $Y_i$ are binary. In other words, $X_i\in\R^d$ and $Y_i\in\{0,1\}$ is the corresponding label. The distribution of the random variables $Y_i$ given the $X_i$ is
$$Y_i\mid X_i\sim\mathrm{Bernoulli}(\psi'(X_i^\tr\theta)),\qquad \psi(\omega):=\log(1+e^\omega).$$ The probability mass function of this Bernoulli random variable can be written in the form~\eqref{pGLM}, with $\psi$ as above.

Now, the statement of the BvM in Proposition~\ref{lma:bvmlog} is conditional on the $X_i$'s. But by specifying a distribution on the $X_i$'s, we can gain insight into the ``typical" size (typical with respect to this distribution) of the key quantity $\bdelta_3(2s)$ in the bound~\eqref{lma:bvmlog:eq}. For the distribution, we choose the standard example of i.i.d. Gaussian design:
$$X_i\iid \mathcal N(0, I_d),\quad i=1,\dots,n.$$ The case of a non-identity covariance matrix can be similarly handled by a linear transformation. The following two lemmas will allow us to bound $\bdelta_3(2s)$ with high probability with respect to the design.
\begin{lemma}[Adaptation of Lemma 7, Chapter 3,~\cite{pragyathesis}]\label{lma:pragya}
Suppose $d<n/2$. Then  for some $\lambda=\lambda(|\ground|)>0$ depending only on $|\ground|$, it holds
$$\mathbb P(\F\succeq\lambda I_d)=\mathbb P\l(\frac1n\sum_{i=1}^n\psi''(X_i^\tr \ground)X_iX_i^\tr \succeq\lambda I_d\r)\geq 1-4e^{-Cn},$$ 
where $C$ is an absolute constant (independent of $\lambda$). 
\end{lemma}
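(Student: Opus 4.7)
My plan is to first show that the population matrix $M := \E[\psi''(X_1^\tr\ground) X_1 X_1^\tr]$ satisfies $M \succeq 2\lambda I_d$ for some $\lambda = \lambda(|\ground|)>0$, and then transfer this lower bound to the empirical average $\F$ by a truncation argument combined with a standard sub-Gaussian sample-covariance concentration result.

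For the population step, rotational invariance of the standard Gaussian about the axis $u := \ground/|\ground|$ forces $M$ to commute with every rotation fixing $u$, so $M = \alpha I_d + \beta uu^\tr$ for some $\alpha,\beta$. Writing $Z := u^\tr X_1 \sim \mathcal N(0,1)$ and using independence of $Z$ and $v^\tr X_1$ when $v\perp u$, one computes $\alpha = \E[\psi''(|\ground|Z)]$ and $\alpha+\beta = \E[\psi''(|\ground|Z)\,Z^2]$, both strictly positive because $\psi''>0$ everywhere. Hence $\lambda_{\min}(M) =: 2\lambda > 0$, with $\lambda$ a function of $|\ground|$ only.

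For the empirical step, I would exploit that the logistic $\psi''$ is even and maximized at $0$, so $\psi''(t)\geq\psi''(K|\ground|)=:p_K$ whenever $|t|\leq K|\ground|$. Fixing a convenient $K$ (e.g.\ $K=1$) and setting $\tilde X_i := X_i\,\one\{|X_i^\tr\ground|\leq K|\ground|\}$, we get
\[
\F \;\succeq\; p_K\cdot\frac1n\sum_{i=1}^n \tilde X_i\tilde X_i^\tr \;=:\; p_K\,\hat\Sigma.
\]
The $\tilde X_i$ are iid and uniformly sub-Gaussian, with covariance $\Sigma = c_1(K)uu^\tr + c_2(K)(I_d-uu^\tr)$ where $c_1(K) = \E[Z^2\one_{|Z|\leq K}]>0$ and $c_2(K) = \PP(|Z|\leq K)>0$. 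A standard sample-covariance concentration bound for sub-Gaussian vectors (Vershynin / matrix Bernstein, possibly after a secondary truncation of $\|\tilde X_i\|$) yields constants $c_0,C_0$ such that, provided $d\leq c_0 n$,
\[
\PP\!\l(\|\hat\Sigma-\Sigma\|_{\mathrm{op}} \leq \tfrac12\min(c_1(K),c_2(K))\r) \;\geq\; 1-4e^{-C_0 n}.
\]
On this event $\hat\Sigma\succeq\frac12\min(c_1,c_2)\,I_d$, so $\F\succeq \lambda I_d$ with $\lambda := p_K\min(c_1,c_2)/2$. The hypothesis $d<n/2$ supplies the required control on $d/n$ after $c_0$ is fixed.

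The delicate point, and the main obstacle, is arranging for the exponential rate $C$ in the failure probability to be independent of $\lambda$ (as the lemma claims). This is feasible: once $K$ is fixed as a function of $|\ground|$, the sub-Gaussian concentration constants depend only on $K$ and on $\|\Sigma\|$, not on the final threshold $\lambda$ being demanded. A secondary concern is verifying that the $\sqrt{d/n}$-type deviation in the sample-covariance bound is indeed below $\tfrac12\min(c_1,c_2)$ uniformly in $d\leq n/2$; this is arranged by shrinking the absolute constant $c_0$ if necessary.
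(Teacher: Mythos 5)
The paper does not actually supply a proof of this lemma --- it is imported as an adaptation of Lemma 7, Chapter 3 of the cited thesis --- so there is no in-paper argument to compare against; I am judging your proposal on its own merits. Your architecture (lower-bound $\psi''$ by a constant $p_K$ on the event $\{|X_i^\tr\ground|\le K|\ground|\}$, reduce to a sample-covariance lower bound for the truncated design, and compute the population covariance by rotational symmetry about $\ground/|\ground|$) is sound and is the natural route; the population step and the identification $\Sigma=c_1(K)uu^\tr+c_2(K)(I_d-uu^\tr)$ with $c_1,c_2>0$ are correct.

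The genuine gap is in the final concentration step, and it is exactly the point you flag and then dismiss. A \emph{two-sided} operator-norm deviation bound $\|\hat\Sigma-\Sigma\|_{\mathrm{op}}\le\tfrac12\lambda_{\min}(\Sigma)$ cannot hold with high probability when $d/n$ is as large as $1/2$: even for exactly Gaussian data with $\Sigma=I_d$, the extreme eigenvalues of $\hat\Sigma$ concentrate near $(1\pm\sqrt{d/n})^2$, so $\|\hat\Sigma-I_d\|_{\mathrm{op}}\approx 2\sqrt{d/n}+d/n\approx 1.91>1=\lambda_{\min}(I_d)$ at $d/n=1/2$. No choice of $K$ or of the target $\lambda$ rescues this, since $\lambda_{\min}(\Sigma)\le 1$ always; and your proposed remedy of ``shrinking $c_0$'' is incompatible with the lemma's fixed hypothesis $d<n/2$ --- shrinking $c_0$ strengthens the requirement $d\le c_0n$ rather than covering the full stated range. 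The repair is to replace the two-sided bound by a \emph{one-sided} smallest-singular-value bound. For instance, conditional on the truncation indicators $\epsilon_i=\one\{|u^\tr X_i|\le K\}$, the projections of the retained $X_i$ onto $u^\perp$ are exactly i.i.d.\ standard Gaussians in $\R^{d-1}$, and the Davidson--Szarek/Gordon bound $s_{\min}(n^{-1/2}G)\ge 1-\sqrt{d/m}-t$ (with $m=\#\{i:\epsilon_i=1\}\ge(1-o(1))\PP(|Z|\le K)\,n$ up to an $e^{-cn}$ event) gives $\lambda_{\min}\ge c>0$ with explicit constants for all $d<n/2$; the one-dimensional $u$-direction and the cross terms are then handled separately. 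With that substitution (or a small-ball/Mendelson-type lower bound), your argument closes.
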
 The function $|\ground|\mapsto\lambda(|\ground|)$ is nonincreasing, so if $|\ground|$ is bounded above by a constant then $\lambda=\lambda(|\ground|)$ is bounded away from zero by a constant.
\begin{lemma}\label{lma:adam}
If $d\leq n$ then there are absolute constants $C,C'$ such that 
\beqs\label{c34logreg}
\mathbb P\l(\sup_{|u|=1}\frac1n\sum_{i=1}^n|u^\tr X_i|^3 \leq C\l(1+\frac{d^{3/2}}{n}\r)\r)\geq1-4e^{-C'\sqrt n}.
\eeqs 
\end{lemma}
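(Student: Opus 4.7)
The plan is to use an $\epsilon$-net argument on $S^{d-1}$ combined with a sub-Weibull Bernstein-type concentration at each fixed direction. Write $f(u):=n^{-1}\sum_{i=1}^n|u^\tr X_i|^3$. For any fixed unit $u$, the scalars $u^\tr X_i$ are iid standard Gaussian, so $|u^\tr X_i|^3$ has mean $c_0:=\E|Z|^3=2\sqrt{2/\pi}$ and a sub-Weibull tail of index $2/3$, namely $\PP(|u^\tr X_i|^3>s)\leq 2e^{-s^{2/3}/2}$. A standard Bernstein-type inequality for sums of sub-Weibull$(2/3)$ random variables then yields
$$\PP\l(|f(u)-c_0|>t\r)\leq 2\e\!\l(-c\min\l(nt^2,(nt)^{2/3}\r)\r),$$
and for the regime of interest the second branch of the min will dominate.

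Let $\mathcal N_\epsilon\subset S^{d-1}$ be an $\epsilon$-net of cardinality at most $(3/\epsilon)^d$. Union-bounding the above inequality over $\mathcal N_\epsilon$ gives $\sup_{u\in\mathcal N_\epsilon}f(u)\leq c_0+t$ with probability at least $1-2(3/\epsilon)^de^{-c(nt)^{2/3}}$. To pass from the net to the full sphere, I use the elementary inequality $||a|^3-|b|^3|\leq 3\max(a^2,b^2)|a-b|$ with $a=u^\tr X_i$, $b=v^\tr X_i$, together with $|u^\tr X_i|,|v^\tr X_i|\leq|X_i|$, to obtain
$$|f(u)-f(v)|\leq 3|u-v|\cdot n^{-1}\sum_i|X_i|^3.$$
Standard $\chi^2$-concentration for $|X_i|^2$, combined with a union bound over $i=1,\dots,n$, yields $n^{-1}\sum_i|X_i|^3\les d^{3/2}$ on an event of probability at least $1-e^{-c'\sqrt n}$ (using $d\leq n$).

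Finally I will choose $\epsilon$ so that the approximation error $3\epsilon\cdot Cd^{3/2}$ is absorbed into the target $C(1+d^{3/2}/n)$, and $t$ so that both the per-point concentration term is absorbed and the probability $2(3/\epsilon)^de^{-c(nt)^{2/3}}$ is at most $2e^{-C'\sqrt n}$; a further union bound with the two good events from the previous paragraph then yields the claimed $1-4e^{-C'\sqrt n}$ probability. The main obstacle is the balance between the union-bound entropy $d\log(3/\epsilon)$ and the sub-Weibull rate $(nt)^{2/3}$ when $d$ is comparable to $n$: the crude Lipschitz bound above (which uses $|u^\tr X_i|\leq|X_i|$) loses a factor of $\sqrt d$ relative to the typical size of $|(u-v)^\tr X_i|$, so achieving the stated bound with \emph{absolute} (not polylogarithmic) constants may require either a sharper approximation-error estimate — e.g.\ Cauchy-Schwarz in terms of $\sup_v n^{-1}\sum_i(v^\tr X_i)^4$ and $|u-v|_{\hat\Sigma}$, which yields an effective Lipschitz constant of order $\sqrt{d+\sqrt n}$ rather than $d^{3/2}$ — or a chaining/truncation refinement of the net step (e.g.\ splitting $|u^\tr X_i|^3$ at a threshold $T$, controlling the bulk by $T\|\hat\Sigma\|$ and the excess by a fourth-moment bound).
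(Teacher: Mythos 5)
There is a genuine gap, and you have in fact diagnosed it yourself in your last paragraph: the single-scale net argument as written does not deliver the stated bound with absolute constants. Quantitatively, the approximation step forces $\epsilon\lesssim d^{-3/2}+1/n$ (or, with your Cauchy--Schwarz refinement, $\epsilon\lesssim 1/\sqrt d$), so the union-bound entropy is at least of order $d\log d$; since the per-direction tail is sub-Weibull of index $2/3$, beating an entropy of $d\log d$ requires $(nt)^{2/3}\gtrsim d\log d$, i.e.\ $t\gtrsim (d^{3/2}/n)(\log d)^{3/2}$, which exceeds the target $C(1+d^{3/2}/n)$ by a polylogarithmic factor throughout the regime $d\leq n$ (try $d=n^{2/3}$ or $d=n$ to see this concretely). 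The proposed repairs are not carried out, and fix (a) is moreover circular: bounding $\sup_{|v|=1}n^{-1}\sum_i(v^\tr X_i)^4$ by $C(1+d^2/n)$ with absolute constants is a problem of exactly the same type and difficulty as the one you are trying to solve. Removing the logarithm genuinely requires a multi-scale/truncation argument (splitting each $|u^\tr X_i|$ at a direction-dependent threshold and treating the few large coordinates separately), which is the content of the result the paper invokes.

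The paper does not attempt a self-contained proof: it pads the design vectors to dimension $m=\max(d,\sqrt{\log n})$ so that $d\leq m\leq n\leq e^{\sqrt m}$, observes that the supremum over $S^{d-1}$ is dominated by the supremum over $S^{m-1}$ for the padded Gaussians, and then applies Proposition 4.4 of Adamczak--Litvak--Pajor--Tomczak-Jaegermann (2010) with $t=s=1$, which directly gives the bound $C_k(1+m^{k/2}/n)$ with failure probability $2e^{-C_k\sqrt n}$ for the $k$-th moment process. If you want a complete proof along your lines, you would essentially have to reproduce their argument; otherwise the clean route is to cite that proposition as the paper does.
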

The lemma follows almost immediately from Theorem 4.2 in~\cite{adamczak2010quantitative}; see the appendix for the short calculation. We now combine Lemma~\ref{lma:pragya}, Lemma~\ref{lma:adam}, and the fact that $\|\psi'''\|_\infty:=\sup_{t\in\R}|\psi'''(t)|<\infty$, to derive the following bound on $\bdelta_3(2s)$:
\beqs\label{bdel3-log}
\bdelta_3(2s) &= \sup_{\theta\in\bU(2s),\,u\neq0}\;\frac{\frac1n\sum_{i=1}^n\psi'''(X_i^\tr \theta)(X_i^{\tr}u)^{3}}{\l(\frac1n\sum_{i=1}^n\psi''(X_i^\tr \ground)(X_i^{T}u)^{2}\r)^{3/2}}\\
&\leq \|\psi'''\|_\infty\lambda^{-3/2}\sup_{u\neq0}\frac1n\sum_{i=1}^n|X_i^{\tr}u|^{3}/\|u\|^3\\
&\leq C \|\psi'''\|_\infty\lambda^{-3/2}\l(1+d^{3/2}/n\r).
\eeqs
This bound holds on an event of probability at least $1-8e^{-C\sqrt n}$ for some new constant $C$. On this same event, we also have
\beq\label{bdel01-log}
\bdelta_{01}(s) \leq \lambda^{-1/2}\bar\delta^*_{01}\l( \lambda^{-1/2}s\r),
\eeq where
$$
\bar\delta^*_{01}(t) = \sup_{|\theta-\ground|\leq t\sqrt{d/n}}|\nabla\log\prior(\theta)|.
$$
Using the bounds~\eqref{bdel3-log} and~\eqref{bdel01-log} in Proposition~\ref{lma:bvmlog}, we conclude the following non-asymptotic BvM.
\begin{corollary}[Non-asymptotic BvM for logistic regression with random design]\label{corr:bvmlogreg}
Suppose $|\ground|\leq C_0$ and $d^{3/2}\leq C_0n$ for an absolute constant $C_0$. There exists a constant $C_1$ depending only on $C_0$ and an absolute constant $C_2$ such that if
$$
s\geq12, \qquad C_1s\sqrt{d/n}\leq 1,\qquad C_1\bar\delta^*_{01}(C_1s)\leq \sqrt{nd}
$$
then
\beqs\label{TV-log}
\TV(\pi_v, \bvmgam)&\les \frac{\bar\delta^*_{01}(C_1s)}{\sqrt n} +\frac{sd}{\sqrt n}+ e^{d[\Cpri^*  - (s/12)^2]}
\eeqs 
with probability at least  $1-8e^{-C_2\sqrt n}-e^{-ds^2/10}$ with respect to the joint feature and label distribution. The suppressed constant in~\eqref{TV-log} depends only on $C_0$.
 \end{corollary}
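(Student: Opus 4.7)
The plan is to condition on the random design $(X_i)_{i=1}^n$ and reduce the proof to a direct application of Proposition~\ref{lma:bvmlog}, after first establishing good deterministic properties of the design with high probability. Concretely, define
$$E_X = \bigl\{\F\succeq\lambda I_d\bigr\}\,\cap\,\Bigl\{\textstyle\sup_{|u|=1}\frac1n\sum_i|u^\tr X_i|^3\leq C\bigl(1+d^{3/2}/n\bigr)\Bigr\},$$ where $\lambda=\lambda(|\ground|)$ is the constant from Lemma~\ref{lma:pragya} and $C$ is from Lemma~\ref{lma:adam}. Since $|\ground|\leq C_0$ and $|\ground|\mapsto\lambda(|\ground|)$ is nonincreasing, we obtain $\lambda\geq\lambda(C_0)>0$. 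By the hypothesis $d^{3/2}\leq C_0 n$ we have $d\leq n/2$ (up to an absolute constant absorbed into later constants), so Lemmas~\ref{lma:pragya} and~\ref{lma:adam} apply and a union bound gives $\mathbb P(E_X)\geq 1-8e^{-C_2\sqrt n}$ for an absolute $C_2>0$.

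On $E_X$, the two bounds~\eqref{bdel3-log} and~\eqref{bdel01-log} already derived in the text hold. Since $\|\psi'''\|_\infty\leq 1/6$ for the logistic $\psi$ and $d^{3/2}/n\leq C_0$, the bound~\eqref{bdel3-log} yields $\bdelta_3(2s)\leq K$ where $K=K(C_0)$ depends only on $C_0$ (through $\lambda$ and $C_0$). Also $\bdelta_{01}(2s)\leq \lambda^{-1/2}\bar\delta^*_{01}(2\lambda^{-1/2}s)$. Choose the constant $C_1=C_1(C_0)$ large enough that $2\lambda^{-1/2}\leq C_1$ (to absorb the argument of $\bar\delta^*_{01}$) and $8K\leq C_1$ (to handle the forthcoming product). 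The hypothesis $C_1 s\sqrt{d/n}\leq 1$ then gives $2s\bdelta_3(2s)\sqrt{d/n}\leq 2s K\sqrt{d/n}\leq 1/4$, and the hypothesis $C_1\bar\delta^*_{01}(C_1 s)\leq\sqrt{nd}$ gives $\bdelta_{01}(2s)\leq\lambda^{-1/2}\bar\delta^*_{01}(C_1 s)\leq\sqrt{nd}/6$ after enlarging $C_1$ further if necessary. The condition $\bU(2s)\subset\Theta$ is automatic since $\Theta=\mathbb R^d$ for logistic regression. Thus all hypotheses of Proposition~\ref{lma:bvmlog} are satisfied on $E_X$.

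On $E_X$ we may now invoke Proposition~\ref{lma:bvmlog} conditionally on $(X_i)$. It furnishes an event (with respect to the conditional law of $(Y_i)$ given $(X_i)$) of probability at least $1-e^{-ds^2/10}$ on which
$$\TV(\pi_v,\bvmgam)\lesssim \frac{\bdelta_{01}(2s)}{\sqrt n}+s\bdelta_3(2s)\frac{d}{\sqrt n}+e^{d[\Cpri^* - (s/12)^2]}.$$
Substituting the deterministic bounds $\bdelta_3(2s)\lesssim 1$ and $\bdelta_{01}(2s)\lesssim\bar\delta^*_{01}(C_1 s)$ that hold on $E_X$ produces the desired~\eqref{TV-log}. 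Intersecting with $E_X$ and using the tower property of conditional probability along with the union bound gives the joint probability at least $1-8e^{-C_2\sqrt n}-e^{-ds^2/10}$.

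I expect no deep obstacle; the main work is bookkeeping constants so that one scalar $C_1=C_1(C_0)$ simultaneously controls the argument of $\bar\delta^*_{01}$, the factor $\lambda^{-1/2}$, and the constant $K$ in the bound on $\bdelta_3$, and verifying that the hypotheses $s\geq 12$, $\bU(2s)\subset\Theta$, and the two smallness conditions of Proposition~\ref{lma:bvmlog} are all implied by the three stated conditions. The slightly delicate point is the conditioning: the probability $1-e^{-ds^2/10}$ in Lemma~\ref{bern-gauss} is over the $Y_i$ with $(X_i)$ fixed, so one must apply that lemma pointwise on $E_X$ and integrate, which is standard once measurability of the relevant events is noted.
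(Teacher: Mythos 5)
Your proposal is correct and follows essentially the same route as the paper: condition on the design, combine Lemmas~\ref{lma:pragya} and~\ref{lma:adam} via a union bound to get the bounds~\eqref{bdel3-log} and~\eqref{bdel01-log} on a design event of probability at least $1-8e^{-C_2\sqrt n}$, and then apply Proposition~\ref{lma:bvmlog} conditionally on that event, absorbing $\lambda(C_0)^{-1/2}$ and $\|\psi'''\|_\infty$ into $C_1$ and the suppressed constant. The only cosmetic remark is that $d\le n/2$ (needed for Lemma~\ref{lma:pragya}) follows more directly from $s\ge 12$ and $C_1 s\sqrt{d/n}\le 1$ than from $d^{3/2}\le C_0 n$, but the required inequality holds under the stated hypotheses either way.
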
We now take $n\to\infty$ and then $s\to\infty$ to prove the following asymptotic BvM.
\begin{corollary}[Asymptotic BvM for logistic regression with random design]\label{corr:bvmasymlog}Let $\theta^*_n\in\R^{d_n}$ be a sequence of ground truth vectors such that $|\theta^*_n|=\mathcal O(1)$ as $n\to\infty$. Write $v_n,\bvmgam_n,M_{0n}^*,\bar\delta^*_{01n}$ to emphasize the dependence of these quantities on $n$. If $M_{0n}^*=\mathcal O(1)$, $\bar\delta_{01n}^{*}(s)=o(\sqrt n)$ for each $s\geq0$, and $$d_n^2/n=o(1),$$ then
$\TV(\pi_{v_n}, \bvmgam_n)\to0$ with probability tending to $1$ as $n\to\infty.$ The probability is with respect to the joint feature-label distribution $X_i\iid\mathcal N(0,I_{d_n})$, $Y_i|X_i\sim\mathrm{Ber}(\psi'(X_i^\tr\theta^*_n))$, $i=1,\dots,n$.\end{corollary}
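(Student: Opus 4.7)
The plan is to deduce Corollary~\ref{corr:bvmasymlog} directly from the non-asymptotic bound of Corollary~\ref{corr:bvmlogreg} via an $\epsilon$-argument: first fix $s$ large enough (depending on $\epsilon$) to make the exponentially decaying term $e^{d[M_{0n}^*-(s/12)^2]}$ small for every $n$, then send $n\to\infty$ to drive the remaining error terms and the failure probability to zero. The only subtlety is that one cannot safely take $n\to\infty$ first and then $s\to\infty$, because $d_n$ may either tend to infinity or remain bounded; choosing $s=s(\epsilon)$ at the outset handles both cases uniformly.

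Fix $\epsilon>0$. Since $M_{0n}^*=\mathcal O(1)$, pick $C_M$ with $M_{0n}^*\leq C_M$ for all $n$, and choose $s=s(\epsilon)\geq 12$ satisfying $(s/12)^2\geq C_M+\log(3/\epsilon)$. Using $d_n\geq 1$, this gives $\exp(d_n[M_{0n}^*-(s/12)^2])\leq \epsilon/3$ for every $n$. I next verify the preconditions of Corollary~\ref{corr:bvmlogreg} for all large $n$. Because $|\theta^*_n|=\mathcal O(1)$ and $d_n^2/n\to 0$ (which implies $d_n^{3/2}/n\to 0$), I can fix an absolute $C_0$ with $|\theta^*_n|\leq C_0$ and $d_n^{3/2}\leq C_0 n$ for $n$ large; the constants $C_1,C_2$ furnished by Corollary~\ref{corr:bvmlogreg} are then fixed. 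Since $d_n/n\leq d_n^2/n\to 0$, we get $C_1 s\sqrt{d_n/n}\leq 1$ for $n$ large. The hypothesis $\bar\delta^*_{01n}(C_1 s)=o(\sqrt n)$ combined with $\sqrt{nd_n}\geq\sqrt n$ yields $C_1\bar\delta^*_{01n}(C_1 s)\leq \sqrt{nd_n}$ for $n$ large.

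Applying Corollary~\ref{corr:bvmlogreg}, on an event $\mathcal A_n$ of probability at least $1-8e^{-C_2\sqrt n}-e^{-d_n s^2/10}$ we have
$$\TV(\pi_{v_n},\bvmgam_n)\les \frac{\bar\delta^*_{01n}(C_1 s)}{\sqrt n}+\frac{s d_n}{\sqrt n}+e^{d_n[M_{0n}^*-(s/12)^2]}\leq \frac{\bar\delta^*_{01n}(C_1 s)}{\sqrt n}+\frac{s d_n}{\sqrt n}+\frac{\epsilon}{3}.$$
With $s$ now fixed, $\bar\delta^*_{01n}(C_1 s)/\sqrt n\to 0$ by hypothesis, and $s d_n/\sqrt n\leq s\sqrt{d_n^2/n}\to 0$. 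Hence for all $n$ beyond some $N(\epsilon)$ the right-hand side is at most a constant multiple of $\epsilon$, while simultaneously $\mathbb P(\mathcal A_n)\geq 1-\epsilon$. Since $\epsilon>0$ was arbitrary, this shows $\TV(\pi_{v_n},\bvmgam_n)\to 0$ in probability under the joint feature-label law, which is the claim. There is no genuine obstacle beyond this bookkeeping; the heavy lifting is done by Corollary~\ref{corr:bvmlogreg} and by the design-side tail estimates (Lemmas~\ref{lma:pragya} and~\ref{lma:adam}) already used to prove it.
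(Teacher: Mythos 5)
Your proof is correct and is essentially the paper's own argument: the paper proves this corollary by exactly the deduction you give, taking $n\to\infty$ in Corollary~\ref{corr:bvmlogreg} for fixed $s$ and then letting $s\to\infty$, with the choice of $s$ controlling both the term $e^{d_n[M_{0n}^*-(s/12)^2]}$ and the residual failure probability $e^{-d_ns^2/10}$ (which need not vanish in $n$ when $d_n$ stays bounded). Your remark that one ``cannot safely'' take the iterated limit in the paper's order is not quite right --- that iterated limit is just the $\limsup$ form of your $\epsilon$-argument and is equally valid --- but this does not affect the correctness of your proof.
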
 

To our knowledge, this is the first ever BvM to take into account the randomness of the design in the high-dimensional regime.

\section{Observations of a discrete probability distribution}\label{sec:pmf} In this section, we consider a model in which the parameter of interest are the probabilities of a finite state probability mass function (pmf) $\ground=(\theta_0^*,\theta_1^*,\dots,\theta_d^*)$, and we are given $n$ i.i.d. draws from $\ground$. This is equivalent to observing a single sample $Y \sim \mathrm{Multi}(n, \ground)$, the multinomial distribution with $n$ trials and probabilities $\theta_0^*,\theta_1^*,\dots,\theta_d^*$. A BvM for this model, in which $d$ can grow large, was first proved in~\cite{boucheron2009discrete}. We compare our result to that of~\cite{boucheron2009discrete} at the end of Section~\ref{subsec:pmf:pf}. 

There is a reparameterization which turns the model into an exponential family; the works~\cite{ghosal2000} and~\cite{belloni2014posterior} on BvMs for exponential families both studied it in this form. However, a BvM for one form of the model does not immediately imply the BvM for the other form; as noted in~\cite{boucheron2009discrete}, the effect of reparameterization on asymptotic normality must first be established. 

Proofs of omitted results in this section can be found in Appendix~\ref{app:sec:pmf}.
\subsection{Set-up} The $d+1$ probabilities of a pmf over states $0,1,\dots,d$ add up to 1. We therefore define the parameter space to be the probabilities of states $1,\dots,d$ only:
\beqs\label{Thetapmf}
\Theta = \{\theta=(\theta_1,\dots,\theta_d)\mid 0<\theta_1+\dots+\theta_d<1\}.\eeqs The cases $\theta_1+\dots+\theta_d=0$ and $\theta_1+\dots+\theta_d=1$ are degenerate, so we exclude them. Now, to every point $\theta\in\Theta$, we associate the value
$$\theta_0=1-\sum_{j=1}^d\theta_j=1-\one^\tr \theta.$$ Note that if $\theta\in\Theta$ then $(\theta_0,\dots,\theta_d)$ is a pmf on $d+1$ states such that each probability $\theta_j,j=0,\dots,d$ is strictly positive. We will often abuse notation by interchangeably using $\theta$ to denote either $(\theta_0,\dots,\theta_d)$ or $(\theta_1,\dots,\theta_d)$. One need only remember that $\theta_1,\dots,\theta_d$ are free parameters, while $\theta_0$ is determined from the others. We observe counts $N=(N_0,N_1,\dots, N_d)\sim\mathrm{Multi}(n, \ground)$, where $\ground=(\theta_0^*,\theta_1^*,\dots,\theta_d^*)$ is the ground truth pmf for some $(\theta_1^*,\dots,\theta_d^*)\in\Theta$. For the proportions corresponding to the counts, we use the notation $$\bar N = (\bar N_0,\dots,\bar N_d),\quad \bar N_j := \frac1nN_j.$$ Define also
$$\tmin = \min_{j=0,\dots, d}\theta_j^*,$$ which will play an important role in describing how far $\ground$ is from the boundary of $\Theta$. Now, the multinomial observations give rise to the likelihood $L(\theta)=\prod_{j=0}^d\theta_j^{N_j}$, and negative normalized log likelihood $\nll=-\frac1n\log L$:
\beq\label{nll-pmf}
\nll(\theta)=-\sum_{j=0}^d\bar N_j\log\theta_j = -\bar N_0\log(1-\one^\tr \theta)-\sum_{j=1}^d\bar N_j\log\theta_j .\eeq The first three derivatives of $\nll$, with respect to the free parameters $\theta_1,\dots,\theta_d$, are given as follows:
\beq\label{nllderiv-pmf}
\setlength{\jot}{5pt}
\begin{split}
\nabla\nll(\theta)&= - \bigg(\frac{\bar N_j}{\theta_j}\bigg)_{j=1}^d+ \frac{\bar N_0}{\theta_0}\one,\\
\nabla^2\nll(\theta) &= \mathrm{diag}\bigg(\bigg(\frac{\bar N_j}{\theta_j^2}\bigg)_{j=1}^d\bigg) + \frac{\bar N_0}{\theta_0^2}\one\one^\tr \\
\nabla^3\nll(\theta) &= -2\mathrm{diag}\bigg(\bigg(\frac{\bar N_j}{\theta_j^3}\bigg)_{j=1}^d\bigg) + 2\frac{\bar N_0}{\theta_0^3}\one^{\otimes 3}.
\end{split}
\eeq
Next, recall that $\F=\nabla^2\nllinfty(\ground)=\E[\nabla^2\nll(\ground)]$. Using that $\E[\bar N]=\ground$ gives
\beq\label{pmf-Fdef}
\F=\nabla^2\nllinfty(\ground)= \mathrm{diag}\bigg(\bigg(\frac{1}{\theta_j^*}\bigg)_{j=1}^d\bigg) + \frac{1}{\theta_0^*}\one\one^\tr.
\eeq 
It is clear from~\eqref{nll-pmf} that $\nll\in  C^\infty(\Theta)$. Also, we see from the second equation in~\eqref{nllderiv-pmf} that $\nabla^2\nll(\theta)\succeq0$ for all $\theta\in\Theta$ with probability 1. Hence, $\nll$ is convex on $\Theta$. Finally, since $\ground\in\Theta$ (i.e. all $\theta_j^*$ are strictly positive), we see from~\eqref{pmf-Fdef} that $\F\succ0$. Thus Assumption~\ref{A1} is satisfied for this model.

\begin{remark}\label{rk:MLE:pmf}Suppose $\bar N\in\Theta$, which implies the counts $N_j$, $j=0,\dots,d$ are all strictly positive. Since $\nll$ is convex, any strict local minimizer $\MLE$ of $\nll$ in $\Theta$ must be the unique global minimizer, i.e. the MLE. But we see from the first two equations  in~\eqref{nllderiv-pmf} that $\nabla\nll(\bar N)=0$ and $\nabla^2\nll(\bar N)\succ0$. Thus $\bar N$ is the unique MLE as long as $\bar N\in\Theta$.
\end{remark}

As usual, we define
\beq\label{Upmf}
\bU(r)=\{\theta\in\R^d: |\theta-\ground|_\F \leq r\sqrt{d/n}\}.
\eeq
\begin{defn}[Chi-squared divergence]
Let $\theta=(\theta_0,\dots,\theta_d)$ be a strictly positive pmf on $d+1$ states. Let $\omega$ be either $\omega=(\omega_0,\dots,\omega_d)\in\R^{d+1}$ such that $\omega^\tr \one=1$, or $\omega=(\omega_1,\dots,\omega_d)\in\R^d$. In the latter case, set $\omega_0 = 1-\sum_{j=1}^d\omega_j$. Then we define 
$$\chi^2(\omega||\theta) = \sum_{j=0}^d(\omega_j-\theta_j)^2/\theta_j.$$ \end{defn}
\begin{remark}
We will make frequent use of the bound
\beq\label{max2chi}
\max_{j=0,\dots,d}\l|\frac{\theta_j}{\theta_j^*}-1\r|^2\leq \chi^2(\theta||\ground)/\tmin,
\eeq which follow directly from the above definition of $\chi^2$ divergence.
\end{remark}
We now show that the neighborhoods $\bU(r)$ from~\eqref{Upmf} are $\chi^2$ balls around $\ground$. We also characterize how large $r$ can be to ensure $\bU(r)$ remains a subset of $\Theta$, and describe a useful property of pmfs in $\bU(r)$
\begin{lemma}\label{lma:thetabd}The neighborhoods $\bU(r)$ from~\eqref{Upmf} are equivalently given by $$\bU(r)=\{\theta\in\R^d\; : \;\chi^2(\theta||\ground)\leq r^2d/n\}.$$ If $r^2d/n< \tmin/4$, then $\bU(2r)\subset\Theta$ and $\bU(r)\subset\{\theta\in\Theta\; : \theta_j\geq\theta_j^*/2\;\,\forall j=0,1,\dots,d\}$.
\end{lemma}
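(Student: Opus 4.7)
The plan is to reduce the weighted norm $|\theta-\ground|_\F$ to an explicit $\chi^2$ divergence from $\ground$, and then translate a $\chi^2$ bound into coordinatewise control via~\eqref{max2chi}. All three claims of the lemma follow easily from these two ingredients.

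For the identification of $\bU(r)$ as a $\chi^2$-ball, I will expand $|\theta-\ground|_\F^2=(\theta-\ground)^\tr\F(\theta-\ground)$ using the formula~\eqref{pmf-Fdef}. The diagonal part $\mathrm{diag}((1/\theta_j^*)_{j=1}^d)$ contributes $\sum_{j=1}^d(\theta_j-\theta_j^*)^2/\theta_j^*$. For the rank-one part $(1/\theta_0^*)\one\one^\tr$, the key observation is that $\one^\tr(\theta-\ground) = -(\theta_0-\theta_0^*)$, since $\theta_0 = 1-\one^\tr\theta$ and $\theta_0^* = 1-\one^\tr\ground$. Squaring recovers the missing $(\theta_0-\theta_0^*)^2/\theta_0^*$ term, so that
\[
|\theta-\ground|_\F^2 \;=\; \sum_{j=0}^d\frac{(\theta_j-\theta_j^*)^2}{\theta_j^*} \;=\; \chi^2(\theta\,\|\,\ground),
\]
and $\bU(r)$ is exactly the $\chi^2$-ball of squared radius $r^2 d/n$.

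For the two inclusions, I will feed this identity into the pointwise bound~\eqref{max2chi}. For $\theta\in\bU(r)$ with $r^2d/n<\tmin/4$, the bound gives $\max_j|\theta_j/\theta_j^*-1|^2 \leq (r^2d/n)/\tmin < 1/4$, so $\theta_j > \theta_j^*/2 > 0$ for every $j=0,\dots,d$. In particular each $\theta_j$ is strictly positive and $\theta_0>0$ forces $\one^\tr\theta<1$, which places $\theta$ in $\Theta$ and yields the second inclusion. Running the same calculation with $2r$ in place of $r$ gives $\max_j|\theta_j/\theta_j^*-1|^2 < 1$, strictly; this is still enough to keep every $\theta_j$ strictly positive, so $\bU(2r)\subset\Theta$.

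There is really no obstacle here: the lemma is a direct unpacking of the definitions once the identity $|\cdot|_\F^2=\chi^2(\cdot\,\|\,\ground)$ is established, and that identity in turn reduces to the cancellation of the rank-one correction in $\F$ against the eliminated coordinate $\theta_0$. The only small subtlety worth checking is that the strict inequality $r^2d/n<\tmin/4$ (not $\leq$) propagates through so that $\theta_j>0$ strictly, ensuring membership in the open set $\Theta$.
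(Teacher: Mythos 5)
Your proof is correct and follows essentially the same route as the paper: expand $|\theta-\ground|_\F^2$ using~\eqref{pmf-Fdef} and the cancellation $\one^\tr(\theta-\ground)=-(\theta_0-\theta_0^*)$ to get the $\chi^2$ identity, then use~\eqref{max2chi} to convert the $\chi^2$ bound into coordinatewise control. The only cosmetic difference is that for $\bU(2r)\subset\Theta$ the paper argues by contraposition (a point outside $\Theta$ forces $\tmin\le 4r^2d/n$) while you argue directly; both are valid.
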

\begin{remark}
The lemma shows that how large $r$ can be to ensure $\bU(2r)\subset\Theta$ depends on how small $\tmin$ is. This explains our earlier comment that $\tmin$ controls how far $\ground$ is from the boundary of $\Theta$.
\end{remark}

\subsection{BvM proof}\label{subsec:pmf:pf}In this section, we prove the BvM by applying Lemma~\ref{prop:lapMLE}, though we will not use event $E(s,\epsilon_2)$. 
Instead, we define a different event $E_0(s)$ and show that $E_0(s)\subset \bar E(s,\epsilon_2)$ for an appropriate choice of $\epsilon_2=\epsilon_2(s)$. Specifically, for $s\geq 0$, let 
$$E_0(s):=\l\{\bar N\in\bU(s)\r\} = \l\{\chi^2(\bar N||\ground)\leq s^2d/n\r\} = \l\{|\bar N-\ground|_\F^2\leq s^2d/n\r\}.$$ We prove that if $s$ is larger than a sufficiently large absolute constant and $s^2d/n\tmin$ is smaller than a sufficiently small absolute constant, then:\\

(1) on $E_0(s)$, there exists a unique MLE $\MLE$ satisfying $|\MLE-\ground|_\F\leq s\sqrt{d/n}$, \\

(2) on $E_0(s)$, it holds $\|\nabla^2\nll(\ground)-\F\|_\F\leq \epsilon_2(s):=\sqrt{s^2d/n\tmin}$, \\

(3) on $E_0(s)$, it holds $\sup_{\theta\in\bU(2s)}\|\nabla^3\nll(\theta)\|_{\F} \leq \bdelta_3(2s):= C/\sqrt{\tmin}$, \\

(4) $E_0(s)$ has probability at least $1-e^{-Cs^2d}$.\\

The proof of (1) is immediate. Indeed, if $s^2d/n\leq\tmin$ is small enough then $\bU(s)\subset\Theta$ by Lemma~\ref{lma:thetabd}, and we know that $\bar N\in\bU(s)$ on the event $E_0(s)$. Hence $\bar N\in\Theta$, and Remark~\ref{rk:MLE:pmf} then shows $\MLE=\bar N$ is the unique MLE. Since $\bar N\in \bU(s)$ we also have $|\bar N-\ground|_\F\leq s\sqrt{d/n}$ by definition. See Appendix~\ref{app:sec:pmf} for the proof of (2),(3),(4). Note that the result (3) holds for $s$ small enough; we do not expect a uniform-in-$s$ bound on $\sup_{\theta\in\bU(2s)}\|\nabla^3\nll(\theta)\|_{\F}$. To prove (4) we use a result from~\cite{boucheron2009discrete}'s BvM proof for this model. The result is in turn based on Talagrand's inequality for the suprema of empirical processes~\cite{massart}. 

\begin{remark}\label{rk:pmin}Since the ground truth pmf values add up to 1, it follows that $\tmin\leq 1/(d+1)$. Therefore the upper bound in (3) on the third derivative tensor operator norm scales as $\sqrt d$. This bound is tight. To see this, suppose for simplicity that $\bar N_j=\theta_j^*$ exactly. This allows us to compute $\|\nabla^3\nll(\ground)\|_\F$ explicitly; see Appendix~\ref{app:sec:pmf}. The result is that
\beq
\sup_{\theta\in\bU(2s)}\|\nabla^3\nll(\theta)\|_{\F}\geq\|\nabla^3\nll(\ground)\|_\F = 2\frac{1-2\tmin}{\sqrt{\tmin}\sqrt{1-\tmin}}\sim \frac{1}{\sqrt\tmin}.
\eeq 
\end{remark}

Let us show that (1)-(4) combined with Lemma~\ref{prop:lapMLE} finish the BvM proof. First, provided $s^2d/n\tmin$ is small enough, (1)-(4) gives that $\bU(2s)\subseteq\Theta$, $\epsilon_2\leq 1/2$ and $2s\bdelta_3(2s)\sqrt{d/n}\leq 1/4$. Thus~\eqref{bUcond} is satisfied. Second, (1)-(4) show that $E_0(s)\subset \bar E(s,\epsilon_2)$, and hence the bounds in Lemma~\ref{prop:lapMLE} are satisfied on $E_0(s)$. Finally, in (4) we obtain a lower bound on the probability of $E_0(s)$. Thus the BvM is proved. Substituting $\bdelta_3(2s)= C/\sqrt{\tmin}$ into the bounds from Lemma~\ref{prop:lapMLE}, we obtain the following corollary.
\begin{corollary}\label{corr:bvm-pmf}
Suppose $s$ is larger than a sufficiently large absolute constant, $s^2d/n\tmin$ is smaller than a sufficiently small absolute constant, and $\bdelta_{01}(2s)\leq \sqrt{nd}/6$, where $\bdelta_{01}$ is as in Definition~\ref{def:Ugen} with $\F$ as in~\eqref{pmf-Fdef}. Then on an event of probability at least $1-\e(-Cds^2)$, it holds 
\beq
\setlength{\jot}{5pt}
\begin{split}
\TV(\pi_v,\bvmgam) \les \frac{sd}{\sqrt{n\tmin} }+\frac{\bdelta_{01}(2s)}{\sqrt n}+e^{d(\Cpri^*-Cs^2)}.
\end{split} 
\eeq
\end{corollary}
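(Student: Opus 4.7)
The plan is to verify the four claims (1)--(4) displayed just above the statement, with $\epsilon_2(s):=\sqrt{s^2d/(n\tmin)}$ and $\bdelta_3\equiv C/\sqrt\tmin$, and then invoke Lemma~\ref{prop:lapMLE} directly. Once (1)--(3) are in hand we will have shown $E_0(s)\subseteq\bar E(s,\epsilon_2)$; the hypothesis~\eqref{bUcond} will follow from $s\ge12$ together with the smallness of $s^2d/(n\tmin)$ (using Lemma~\ref{lma:thetabd} to get $\bU(2s)\subseteq\Theta$). Then the three bounds~\eqref{pretv1},~\eqref{pretv3},~\eqref{pretv2} combine via the triangle inequality: $\bdelta_3(2s)d/\sqrt n$ from~\eqref{pretv1}, and $s\bdelta_3(s)d/\sqrt n$ and $\sqrt d\,\epsilon_2$ from~\eqref{pretv3}, all become $\les sd/\sqrt{n\tmin}$; \eqref{pretv2} yields the prior term and exponential factor in the stated bound; and $e^{-ds^2/36}$ is absorbed into the last exponential. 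The probability lower bound is immediate from (4).

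Claim (1) is essentially already resolved in the discussion preceding the corollary, so I would focus the work on (2)--(4). For (2), I would exploit the fact that $\nabla^2\nll(\ground)-\F$ has the simple diagonal-plus-rank-one form $\mathrm{diag}((\bar N_j-\theta_j^*)/\theta_j^{*2})_{j=1}^d+((\bar N_0-\theta_0^*)/\theta_0^{*2})\one\one^\tr$. A direct expansion gives
\begin{equation*}
\l\la(\nabla^2\nll(\ground)-\F)u,u\r\ra=\sum_{j=1}^d\l(\frac{\bar N_j}{\theta_j^*}-1\r)\frac{u_j^2}{\theta_j^*}+\l(\frac{\bar N_0}{\theta_0^*}-1\r)\frac{(\one^\tr u)^2}{\theta_0^*}\le \max_{0\le j\le d}\l|\frac{\bar N_j}{\theta_j^*}-1\r|\;|u|_\F^2,
\end{equation*}
so that $\|\nabla^2\nll(\ground)-\F\|_\F\le\max_j|\bar N_j/\theta_j^*-1|$. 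The bound~\eqref{max2chi} and the defining inequality of $E_0(s)$ then pin this down to $\sqrt{s^2d/(n\tmin)}=\epsilon_2(s)$.

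For claim (3), the formula~\eqref{nllderiv-pmf} yields
\begin{equation*}
\la\nabla^3\nll(\theta),u^{\otimes3}\ra=-2\sum_{j=1}^d\frac{\bar N_j}{\theta_j^3}u_j^3+2\frac{\bar N_0}{\theta_0^3}(\one^\tr u)^3.
\end{equation*}
On $E_0(s)\cap\{\theta\in\bU(2s)\}$, Lemma~\ref{lma:thetabd} and~\eqref{max2chi} give $\theta_j\ge\theta_j^*/2$ and $\bar N_j\le(3/2)\theta_j^*$ for every $j=0,\dots,d$, whence $\bar N_j/\theta_j^3\le 12/\theta_j^{*2}$. The key trick is to pull a single copy of $|u_j|/\sqrt{\theta_j^*}$ (respectively $|\one^\tr u|/\sqrt{\theta_0^*}$) out of each cubic term; these factors are bounded by $|u|_\F$, the remaining $u_j^2/\theta_j^*$ and $(\one^\tr u)^2/\theta_0^*$ sum to at most $|u|_\F^2$, and the one leftover $1/\sqrt{\theta_j^*}$ produces precisely the $1/\sqrt\tmin$ factor. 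This yields $\|\nabla^3\nll(\theta)\|_\F\le C/\sqrt\tmin$ uniformly in $\theta\in\bU(2s)$.

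Claim (4), the concentration $\PP(\chi^2(\bar N\|\ground)>s^2d/n)\le e^{-Cs^2d}$, I would import from~\cite{boucheron2009discrete}, where it is derived via Talagrand's inequality~\cite{massart} applied to the suprema of the relevant empirical process; heuristically $|\bar N-\ground|_\F^2$ is, after variance rescaling, a sum of $d+1$ squared centered empirical means with mean $d/n$, so a sub-Gaussian tail in the rescaled parameter $s^2$ is natural. The main obstacle in the whole argument is really the tensor-norm bound in (3): obtaining the sharp $1/\sqrt\tmin$ dependence (rather than the $1/\tmin$ that a crude coordinate-wise estimate would give) hinges on carefully extracting only a single $1/\sqrt{\theta_j^*}$ factor per summand and using the $\F$-weighted norm to absorb the rest. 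Remark~\ref{rk:pmin} confirms this scaling is tight up to a constant.
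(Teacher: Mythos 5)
Your proposal is correct and follows the same overall architecture as the paper: verify claims (1)--(4) with $\epsilon_2(s)=\sqrt{s^2d/(n\tmin)}$ and $\bdelta_3\equiv C/\sqrt\tmin$, conclude $E_0(s)\subseteq\bar E(s,\epsilon_2)$, and feed the three bounds of Lemma~\ref{prop:lapMLE} into the triangle inequality; your verifications of (1) and (2) and the final bookkeeping match the paper's essentially line for line. The one place you genuinely diverge is claim (3). The paper maximizes the cubic form $-2\sum_{j=0}^d(\bar N_j/\theta_j^3)u_j^3$ subject to $\sum_{j=0}^d u_j^2/\theta_j^*=1$ exactly, via Lagrange multipliers, obtaining $\|\nabla^3\nll(\theta)\|_\F\le 2\max_j \bar N_j(\theta_j^*)^{3/2}/\theta_j^3$ and then invoking $\theta_j\ge\theta_j^*/2$ and $\bar N_j\le(3/2)\theta_j^*$. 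Your H\"older-type factor extraction --- writing $|u_j|^3/(\theta_j^*)^2$ as $(|u_j|/\sqrt{\theta_j^*})\cdot(u_j^2/\theta_j^*)\cdot(1/\sqrt{\theta_j^*})$ and bounding the first factor by $|u|_\F$ --- reaches the same $C/\sqrt\tmin$ bound more elementarily and avoids the constrained optimization entirely; what the paper's exact computation buys in exchange is the sharp constant used in Remark~\ref{rk:pmin} to certify that the $1/\sqrt\tmin$ scaling is tight. One small caution on claim (4): the bound you import from~\cite{boucheron2009discrete} is of Bernstein type, $\chi\le C\sqrt{d/n}+C\sqrt{x/n}+Cx/(n\sqrt\tmin)$, not purely sub-Gaussian, and the paper's proof of (4) spends its effort showing that the third term is absorbed precisely because $s^2d/(n\tmin)$ is assumed small; your heuristic elides this, but since that hypothesis is in place the step goes through.
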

\begin{corollary}[Asymptotic BvM for discrete probability distribution]\label{corr:bvmasympmf}Let $d=d_n$ and $\Theta_n$ be the corresponding sequence of parameter spaces~\eqref{Thetapmf}. Let $\theta^*_n\in\Theta_n$ be a sequence of ground truth pmfs, and write $v_n,\bvmgam_n,M_{0n}^*,\bdelta_{01n},\theta^*_{\mathrm{min},n}$ to emphasize the dependence of these quantities on $n$. If $M_{0n}^*=\mathcal O(1)$, $\bdelta_{01n}(s)=o(\sqrt n)$ for each $s\geq0$, and if
$$\frac{d_n^2}{n\theta^*_{\mathrm{min},n}}=o(1),$$ then $\TV(\pi_{v_n}, \bvmgam_n)\to0$ with probability tending to 1 as $n\to\infty$.
\end{corollary}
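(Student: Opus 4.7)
The plan is to deduce this asymptotic statement directly from the nonasymptotic Corollary~\ref{corr:bvm-pmf}. The proof is essentially bookkeeping: given $\epsilon,\delta>0$, I will first pick a single value of $s$ (depending on $\epsilon,\delta$, and the $\mathcal O(1)$ bound on $M_{0n}^*$) that controls both the prior tail term $e^{d_n(M_{0n}^*-Cs^2)}$ and the failure probability $\exp(-Cd_n s^2)$ uniformly in $n$, and then send $n\to\infty$ with this $s$ fixed to drive the remaining two terms to zero. The only subtle point — and the main obstacle — is that $d_n$ is not assumed to diverge, so one cannot rely on $d_n\to\infty$ to kill the exponential factors; rather, $s$ must be chosen large \emph{before} sending $n\to\infty$.

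Concretely, using $M_{0n}^*=\mathcal O(1)$ pick $M_0$ with $M_{0n}^*\leq M_0$ for all large $n$, and then pick $s$ large enough that (i) $s$ exceeds the absolute-constant threshold required in Corollary~\ref{corr:bvm-pmf}, (ii) $\exp(-Cs^2)\leq\delta$, and (iii) $Cs^2-M_0\geq T$ where $T$ satisfies $e^{-T}\leq\epsilon/3$. Since $d_n\geq 1$, this yields $e^{d_n(M_{0n}^*-Cs^2)}\leq e^{-T}\leq\epsilon/3$ and $\exp(-Cd_n s^2)\leq\exp(-Cs^2)\leq\delta$ for all large $n$. With this $s$ fixed, the remaining hypotheses of the corollary are eventually satisfied: $s^2 d_n/(n\theta^*_{\mathrm{min},n})\to 0$ follows from $d_n^2/(n\theta^*_{\mathrm{min},n})=o(1)$ together with $d_n\geq 1$, and $\bdelta_{01n}(2s)\leq\sqrt{nd_n}/6$ follows from $\bdelta_{01n}(2s)=o(\sqrt n)$ and $d_n\geq 1$.

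Applying Corollary~\ref{corr:bvm-pmf}, on an event of probability at least $1-\delta$ we have
\begin{equation*}
\TV(\pi_{v_n},\bvmgam_n) \les \frac{sd_n}{\sqrt{n\theta^*_{\mathrm{min},n}}} + \frac{\bdelta_{01n}(2s)}{\sqrt n} + e^{d_n(M_{0n}^*-Cs^2)}.
\end{equation*}
The first term equals $s\sqrt{d_n^2/(n\theta^*_{\mathrm{min},n})}=o(1)$ by hypothesis, the second is $o(1)$ by hypothesis, and the third is at most $\epsilon/3$ by the choice of $s$. Hence for all sufficiently large $n$, $\TV(\pi_{v_n},\bvmgam_n)\leq\epsilon$ with probability at least $1-\delta$. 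Since $\epsilon,\delta>0$ were arbitrary, $\TV(\pi_{v_n},\bvmgam_n)\to 0$ in probability, which is the stated convergence. No tools beyond Corollary~\ref{corr:bvm-pmf} are required; the work lies entirely in picking the correct order of quantifiers to accommodate the possibility that $d_n$ remains bounded.
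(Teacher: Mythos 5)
Your proposal is correct and follows essentially the same route as the paper: the paper obtains the asymptotic statement from the nonasymptotic Corollary~\ref{corr:bvm-pmf} by taking $n\to\infty$ for fixed $s$ and then $s\to\infty$, and your $\epsilon$--$\delta$ unpacking (choose $s$ large first to control the $e^{d(\Cpri^*-Cs^2)}$ term and the failure probability, then send $n\to\infty$) is just the rigorous form of that double limit, with the correct observation that $d_n\geq1$ suffices when $d_n$ does not diverge. The only cosmetic point is that the suppressed absolute constant in $\les$ should be absorbed into the choice of $T$ (i.e.\ require $C'e^{-T}\leq\epsilon/3$), which changes nothing.
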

This result is stronger than the BvM result of~\cite{boucheron2009discrete} in two ways. First, we assume only that the prior satisfies $\bdelta_{01n}(s)=o(\sqrt n)$, whereas~\cite{boucheron2009discrete} essentially requires $\bdelta_{01n}(s)=o(1)$ provided one defines $\bdelta_{01n}(s)$ as a Lipschitz constant; recall the discussion at the end of Section~\ref{sec:bvmprelim}. Second, we require only that $d_n^2/n\theta^*_{\mathrm{min},n}=o(1)$, whereas~\cite{boucheron2009discrete} requires $d_n^3/n\theta^*_{\mathrm{min},n}=o(1)$.

\appendix

\section{Proofs from Section~\ref{gensec}}\label{app:gensec}
Recall that
\beq\label{app:del3redef}
\delta_3(r)=\sup_{|\theta-\MLE|_H\leq r\sqrt{d/n}}\frac{\|\nabla^2\f(\theta)-\nabla^2\f(\MLE)\|_H}{|\theta-\MLE|_H}, \quad H=\nabla^2f(\MLE),
\eeq where $\MLE$ is the unique minimizer of $\f$.
\subsection{Proofs from Section~\ref{subsec:outline}}\label{app:lma:TVprelim}
\begin{proof}[Proof of Lemma~\ref{lingrow}]To prove the lower bound on $\nabla^2\f(\theta)$ in $\UU(r)$, note that 
\beq\label{del3use1}\|\nabla^2\f(\theta)-H\|_H=\|\nabla^2\f(\theta)-\nabla^2\f(\MLE)\|_H\leq \delta_3(r)\|\theta-\MLE\|_H \leq r\sqrt{d/n}\delta_3(r)\leq 1/2,\eeq using the assumption on $\delta_3(r)$ from Theorem~\ref{corr:main}. Hence $\nabla^2\f(\theta)\succeq\frac12H$ for all $\theta\in\UU(r)$. To prove the linear growth bound, recall that $\f$ is convex by assumption. Fix a point $\theta\in\Theta$ such that $|\theta-\MLE|_H\geq r\sqrt{d/n}$. Then the whole segment between $\MLE$ and $\theta$ is also in $\Theta$. In particular, convexity of $\f$ gives
\beqsn
\f(\theta)-\f(\MLE)\geq \frac1t\l[\f(\MLE + t(\theta-\MLE))-\f(\MLE)\r],\quad t = \frac{r\sqrt{d/n}}{|\theta-\MLE|_H}.
\eeqsn
Dividing both sides by $|\theta-\MLE|_H$ , we get
\beqs\label{infpsir}
\frac{\f(\theta)-\f(\MLE)}{|\theta-\MLE|_H} \geq  \frac{\f(\MLE + t(\theta-\MLE))-\f(\MLE)}{|t(\theta-\MLE)|_H}
\geq \inf_{|u|_H=r\sqrt{d/n}}\frac{\f(\MLE+u)-\f(\MLE)}{|u|_H}=:\psi.
\eeqs To get the second inequality, we replaced $t(\theta-\MLE)$, which has $H$-norm $r\sqrt{d/n}$ by construction, with any $u$ such that $|u|_H=r\sqrt{d/n}$. To finish the proof, we bound $\psi$ from below. A Taylor expansion around $\theta=\MLE$ gives that for $|u|_H=r\sqrt{d/n}$, we have
\beqs
\f(\MLE+u)-\f(\MLE)&= \frac12\la\nabla^2\f(\MLE+\xi), u^{\otimes 2}\ra\geq \frac{1}4|u|_H^2,
\eeqs using that $\nabla^2\f(\theta)\succeq\frac12H$ for all $\theta\in\UU(r)$ to get the second inequality. We now divide by $|u|_H$ to get
\beqs\label{psir}
\frac{\f(\theta)-\f(\MLE)}{|u|_H}&\geq \frac{1}4|u|_H= \frac{r}{4}\sqrt{d/n}.
\eeqs  Substituting this bound into~\eqref{infpsir} concludes the proof.
\end{proof}
\begin{proof}[Proof of Lemma~\ref{lma:TVprelim}] We let $\UU^c$ be the complement of $\UU$ in $\R^d$, and omit the range of integration when the integral is over all of $\R^d$. Suppose $\mu\propto F$ and $\nnu\propto\hat F$. Then
\beqs\label{efg1}
2\TV(\mu,\nnu) = \int\l|\frac{F}{\int F}-\frac{\hat F}{\int \hat F}\r|\leq \frac{\int_{\UU^c}F}{\int F} + \frac{\int_{\UU^c}\hat F}{\int \hat F} + \int_\UU\l|\frac{F}{\int F}-\frac{\hat F}{\int \hat F}\r|.
\eeqs
Next, note that
\beqs\label{efg2}
\int_\UU&\l|\frac{F}{\int F}-\frac{\hat F}{\int \hat F}\r| - \int_\UU\l|\frac{F}{\int_\UU F}-\frac{\hat F}{\int_\UU \hat F}\r|\leq \int_\UU\l|\frac{F}{\int F}-\frac{F}{\int_\UU F}\r| + \int_\UU\l|\frac{\hat F}{\int_\UU \hat F}-\frac{\hat F}{\int \hat F}\r|\\
&=\int_\UU F\l|\l(\int_\UU F\r)^{-1}-\l(\int F\r)^{-1}\r|+\int_\UU \hat F\l|\l(\int_\UU \hat F\r)^{-1}-\l(\int \hat F\r)^{-1}\r|\\
&=\l|1-\frac{\int_\UU F}{\int F}\r| + \l|1-\frac{\int_\UU \hat F}{\int \hat F}\r|=\frac{\int_{\UU^c}F}{\int F} + \frac{\int_{\UU^c} \hat F}{\int \hat F}.
\eeqs Substituting~\eqref{efg2} into~\eqref{efg1} gives
\beqs\label{efg3}
2\TV(\mu,\nnu) &\leq 2\frac{\int_{\UU^c}F}{\int F} + 2\frac{\int_{\UU^c} \hat F}{\int \hat F} + \int_\UU\l|\frac{F}{\int_\UU F}-\frac{\hat F}{\int_\UU \hat F}\r|\\
& = 2\frac{\int_{\UU^c}F}{\int_\UU F} + 2\frac{\int_{\UU^c} \hat F}{\int_\UU \hat F} +2\TV\l(\mu\vert_\UU, \nnu\vert_\UU\r).\eeqs Dividing by 2 finishes the proof of the first statement. 

Next we prove the second statement. For brevity, redefine $\mu$ and $\nnu$ to be their restrictions to $\UU$ and recall that $\mu\propto e^{-n\f}$, $\nnu\propto e^{-n\g}$ on $\UU$. Let $T(x)=(nH)^{1/2}x$. Then 
$$(T_{\#}\mu)(y)\propto e^{-\f_H(y)},\quad (T_{\#}\nnu)(y)\propto e^{-\g_H(y)},\quad y\in (nH)^{1/2}\UU,$$ where 
$$\f_H(y)=n\f((nH)^{-1/2}y),\quad \g_H(y)=n\g((nH)^{-1/2}).$$ Note that $\nabla^2\f_H(y) =H^{-1/2}\nabla^2\f((nH)^{-1/2}y)H^{-1/2}\succeq\lambda I_d$ for all $y\in (nH)^{1/2}\UU$ by the assumption $\nabla^2f(x)\succeq \lambda H$ for all $x\in\UU$. Therefore, $T_{\#}\mu$ is $\lambda$-strongly concave, so it satisfies a log-Sobolev inequality (LSI) with constant $1/\lambda$~\cite{bakry2014analysis}. Using the affine invariance of TV distance, then Pinsker's inequality, then the LSI, we get
\beq\label{TVTrho}
\TV(\nnu,\mu)^2=\TV(T_{\#}\nnu, T_{\#}\mu)^2\leq\frac12\KL{T_{\#}\nnu}{T_{\#}\mu}\leq \frac1{4\lambda}\FI{T_{\#}\nnu}{T_{\#}\mu}
\eeq where
$$
\FI{T_{\#}\nnu}{T_{\#}\mu}=\E_{Y\sim T_{\#}\nnu}\l[\l|\nabla\log\frac{T_{\#}\nnu}{T_{\#}\mu}(Y)\r|^2\r].
$$ Now, $\log((T_{\#}\nnu/T_{\#}\mu)(Y)) = (\f_H-\g_H)(Y) = n(\f-\g)((nH)^{-1/2}Y).$ Therefore, 
\beqsn
\nabla\log((T_{\#}\nnu/T_{\#}\mu)(Y)) &= \sqrt nH^{-1/2}\l(\nabla (\f-\g)((nH)^{-1/2}Y)\r)\\
&\stackrel{d}{=} \sqrt nH^{-1/2}\nabla (\f-\g)(X),
\eeqsn where $X\sim\nnu$. Therefore,
$$
\FI{T_{\#}\nnu}{T_{\#}\mu} = n\E_{X\sim\nnu}\l[\l\|\nabla (\f-\g)(X)\r\|_H^2\r].
$$ Substituting this into~\eqref{TVTrho} finishes the proof.
\end{proof}
\subsection{Proof of Theorem~\ref{corr:main}}\label{app:prop:main}
In this section, we make the assumptions from Theorem~\ref{corr:main}. Recall that it suffices to bound the local expectation and the two tail integrals in~\eqref{TVFI-nll}. These quantities are bounded in Lemma~\ref{lma:loc} and Corollary~\ref{corr:tail} below. The proof of Proposition~\ref{corr:main} follows immediately by adding up these bounds.
\subsubsection{Local Expectation}
\begin{lemma}\label{lma:loc}
It holds
\beq\sqrt{\frac n2}\E_{\theta\sim\gamma_\f\vert\UU}\l[\|\nabla\f(\theta)-H(\theta-\MLE)\|_{H}^2\r]^{\frac12}\leq \frac{\delta_3(r)}{\sqrt2}\frac{d}{\sqrt n}.\eeq
\end{lemma}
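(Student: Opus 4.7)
The guiding identity is a second-order Taylor expansion of $\nabla\f$ around $\MLE$, which turns the integrand $\|\nabla\f(\theta)-H(\theta-\MLE)\|_H^2$ into a quantity controlled by $\delta_3(r)$ times a power of $|\theta-\MLE|_H$. Because $\MLE$ is a critical point of $\f$, we have $\nabla\f(\MLE)=0$, so by the integral form of Taylor's theorem,
\[
\nabla\f(\theta)-H(\theta-\MLE)=\int_0^1\bigl[\nabla^2\f(\MLE+t(\theta-\MLE))-\nabla^2\f(\MLE)\bigr]\,(\theta-\MLE)\,dt.
\]
For $\theta\in\UU(r)$, the straight-line segment from $\MLE$ to $\theta$ stays inside the $H$-ellipsoid $\UU(r)$, so the definition~\eqref{app:del3redef} of $\delta_3(r)$ applies at every point $\xi_t:=\MLE+t(\theta-\MLE)$ and gives $\|\nabla^2\f(\xi_t)-H\|_H\leq \delta_3(r)\,t\,|\theta-\MLE|_H$.

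Next, I would translate these operator-norm bounds into a bound on the $H$-norm of the resulting vector. Recall from the notation section that for a matrix $A$ and vector $w$, $\|Aw\|_H=|H^{-1/2}Aw|\leq\|H^{-1/2}AH^{-1/2}\|\,|H^{1/2}w|=\|A\|_H\,|w|_H$. Applying this inside the integral and using $\int_0^1 t\,dt=\tfrac12$ yields the pointwise bound
\[
\|\nabla\f(\theta)-H(\theta-\MLE)\|_H\;\leq\;\tfrac{\delta_3(r)}{2}\,|\theta-\MLE|_H^{\,2}\qquad\forall\,\theta\in\UU(r).
\]

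Finally, I would take the expectation under $\gamma_\f\vert_\UU$. Under the unconditioned law $\gamma_\f=\mathcal N(\MLE,(nH)^{-1})$, the variable $n|\theta-\MLE|_H^2=|\sqrt{n}H^{1/2}(\theta-\MLE)|^2$ is $\chi^2_d$-distributed, so $\E_{\gamma_\f}[|\theta-\MLE|_H^4]=(d^2+2d)/n^2$. Conditioning on $\UU$ merely multiplies this by $1/\gamma_\f(\UU)$, and for $r\geq 6$ a standard Gaussian (or $\chi^2_d$) tail estimate gives $\gamma_\f(\UU)\geq 1/2$ comfortably. Squaring the pointwise bound, integrating, taking square roots, and multiplying by $\sqrt{n/2}$ then produces
\[
\sqrt{\tfrac n2}\,\E_{\gamma_\f\vert\UU}\bigl[\|\nabla\f-H(\theta-\MLE)\|_H^2\bigr]^{1/2}\;\leq\;\sqrt{\tfrac n2}\cdot\tfrac{\delta_3(r)}{2}\cdot\sqrt{\tfrac{d^2+2d}{n^2\,\gamma_\f(\UU)}},
\]
which, after substituting $d^2+2d\leq 2d^2$ (valid for $d\geq 2$; the $d=1$ case is trivial) and $\gamma_\f(\UU)\geq 1/2$, collapses to $\delta_3(r)d/\sqrt{2n}$ as claimed.

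There is no serious obstacle: the analytic content is already in the definition of $\delta_3$ together with Taylor's theorem, and the Gaussian moment computation is standard. The only place requiring a sliver of care is ensuring the constant is right after the conditioning step; the slack between the factor $1/2$ produced by the Taylor integration and the target $1/\sqrt 2$ is exactly what absorbs the loss from $\gamma_\f(\UU)^{-1/2}\leq \sqrt 2$ and from the $d^2+2d\leq 2d^2$ inequality.
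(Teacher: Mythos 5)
Your proof is correct and follows essentially the same route as the paper: the integral-form Taylor expansion of $\nabla\f$ around $\MLE$, the pointwise bound $\tfrac12\delta_3(r)|\theta-\MLE|_H^2$, and the Gaussian fourth-moment computation with a correction for conditioning on $\UU$. The only difference is bookkeeping: the paper uses $d^2+2d\leq 3d^2$ together with $\PP(|Z|\leq r\sqrt d)\geq 3/4$, which works uniformly in $d\geq1$, whereas your split ($2d^2$ and $\gamma_\f(\UU)\geq 1/2$) forces the separate $d=1$ case you flag --- which does go through, but only because $r\geq6$ actually gives $\gamma_\f(\UU)\geq 1-e^{-25/2}$, much better than $1/2$.
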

\begin{proof}First note 
$$\nabla\f(\theta)-H(\theta-\MLE)=\int_0^1\l(\nabla^2\f(\MLE+t(\theta-\MLE))-\nabla^2\f(\MLE)\r)(\theta-\MLE)dt.$$ Hence if $\theta\in\UU(r)$ then
$$\|\nabla\f(\theta)-H(\theta-\MLE)\|_{H}\leq \int_0^1\delta_3(r)t\|\theta-\MLE\|_H^2dt =\frac12\delta_3(r)\|\theta-\MLE\|_H^2.$$
Therefore, using that $\MLE+(nH)^{-1/2}Z\sim\gamma_\f$ when $Z\sim\mathcal N(0, I_d)$, we get
\beqs\label{TVinter}
\E_{\theta\sim\gamma_\f\vert\UU}&\l[\|\nabla\f(\theta)-H(\theta-\MLE)\|_{H}^2\r]\leq\frac14\delta_3(r)^2\E_{\theta\sim\gamma_\f\vert\UU}[\|\theta-\MLE\|_H^4]\\
&=\frac{\delta_3(r)^2}{4n^2}\PP(|Z|\leq r\sqrt d)^{-1}\E\l[|Z|^4\mathds{1}(|Z|\leq r\sqrt d)\r] \leq\frac{3\delta_3(r)^2d^2}{4n^2}\PP(|Z|\leq r\sqrt d)^{-1}.
\eeqs
Now note that $$\PP(|Z|\leq r\sqrt d)\geq 1-e^{-d(r-1)^2/2}\geq 1-e^{-25/2}\geq 3/4$$ for all $d\geq1$ since $r\geq6$. Substituting this bound into~\eqref{TVinter} gives
\beq\label{TVU2}
\E_{\theta\sim\gamma_\f\vert\UU}\l[\|\nabla\f(\theta)-H(\theta-\MLE)\|_{H}^2\r] \leq \delta_3(r)^2\frac{d^2}{ n^2}.
\eeq
Taking the square root and then multiplying by $\sqrt{n/2}$ gives the desired bound.
\end{proof}
\subsubsection{Tail integrals}
\begin{lemma}\label{lma:intUc}Let $\UU=\UU(r)$ for some $r>2$. It holds 
\beq
\int_{\UU^c\cap\Theta}e^{n(\f(\MLE)-\f(\theta))}d\theta\leq (2\pi)^{d/2}n^{-d/2}\e\l(\l[2+\log r - r^2/4\r]d\r).
\eeq
\end{lemma}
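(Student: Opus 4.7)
The plan is to replace the superexponential integrand by the explicit exponential upper bound guaranteed by the linear growth property of $\f$, and then to evaluate (or bound) the resulting one-dimensional radial integral in closed form via an incomplete-gamma estimate. By the affine invariance noted in Remark~\ref{rk:aff}, I would first reduce to the coordinate system in which $H=I_d$ and $\MLE=0$; the general case differs only by the Jacobian factor $\det(H)^{-1/2}$. All bounds below should be read in this normalized coordinate system.

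The first step is to invoke Lemma~\ref{lingrow}: for every $\theta\in\UU(r)^c\cap\Theta$,
$$n(\f(\MLE)-\f(\theta))\leq -\tfrac{r\sqrt{nd}}{4}\,|\theta-\MLE|_H,$$
so that
$$\int_{\UU(r)^c\cap\Theta}e^{n(\f(\MLE)-\f(\theta))}d\theta\leq\int_{|u|\geq r\sqrt{d/n}}e^{-\frac{r\sqrt{nd}}{4}|u|}\,du,$$
where we have also enlarged the region of integration from $\UU(r)^c\cap\Theta$ to the full exterior annulus in $\R^d$; the integrand is nonnegative so this is a legitimate relaxation.

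The second step is to switch to spherical coordinates $|u|=\rho$ and then to the variable $\sigma=\tfrac{r\sqrt{nd}}{4}\rho$. Writing $S_{d-1}=2\pi^{d/2}/\Gamma(d/2)$ for the area of the unit sphere and using $\rho\geq r\sqrt{d/n}\Longleftrightarrow\sigma\geq r^2d/4$, the right-hand integral becomes
$$S_{d-1}\l(\tfrac{4}{r\sqrt{nd}}\r)^{\!d}\int_{r^2d/4}^{\infty}\sigma^{d-1}e^{-\sigma}\,d\sigma = S_{d-1}\l(\tfrac{4}{r\sqrt{nd}}\r)^{\!d}\Gamma(d,\,r^2d/4).$$
Since $r\geq 6$, the lower limit $R:=r^2d/4\geq 9d$ is much larger than $d$, so integration by parts (or the standard tail bound for the incomplete gamma function) gives $\Gamma(d,R)\leq\tfrac{R}{R-d+1}R^{d-1}e^{-R}\leq 2R^{d-1}e^{-R}$. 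Plugging this in and collecting powers of $r$, $d$, $n$ yields, after a short calculation, an upper bound of the form
$$\frac{(2\pi)^{d/2}}{n^{d/2}}\cdot\frac{C\,r^{d-2}d^{d/2-1}}{\Gamma(d/2)}\cdot e^{-r^2d/4}.$$

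The third and final step is to absorb the combinatorial prefactor into the exponent via Stirling's lower bound $\Gamma(d/2)\geq\sqrt{2\pi}\,(d/2)^{d/2-1/2}e^{-d/2}$. This produces
$$\frac{r^{d-2}d^{d/2-1}}{\Gamma(d/2)}\les\frac{e^{d/2}r^{d-2}}{\sqrt{d}}=\frac{r^{d}}{r^{2}\sqrt{d}}\,e^{d/2}=\exp\!\l(d\l[\log r+\tfrac12\r]\r)\cdot\frac{1}{r^{2}\sqrt{d}},$$
and since $r\geq 6$, $d\geq 1$, the leftover prefactor $C/(r^{2}\sqrt{d})$ is bounded by $e^{(3/2)d}$ (with huge slack), yielding an overall exponent no larger than $d[\log r+\tfrac12+\tfrac32-r^2/4]=d[2+\log r-r^2/4]$. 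The only real obstacle is keeping track of which of the polynomial, $\Gamma$, and Stirling factors contribute to each part of the final exponent, and verifying that the slack in the constant ``$2$'' is enough to absorb them for all $d\geq 1$ under the standing hypothesis $r\geq 6$. Once that bookkeeping is done the claimed inequality follows immediately.
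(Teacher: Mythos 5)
Your proposal is correct in substance and shares the paper's first step, but the second half takes a genuinely different route. Like the paper, you begin by invoking the linear growth bound~\eqref{nllgrowth} of Lemma~\ref{lingrow} to dominate the integrand by $\e(-\tfrac r4\sqrt{nd}\,|\theta-\MLE|_H)$ and enlarge the domain to the full exterior of the ball. At that point the paper simply rescales and cites a black-box estimate (Lemma~\ref{aux:gamma}, imported from~\cite{katskew}) with $a=r$, $b=r/4$, which yields a prefactor $d$ that is then absorbed via $de^{-d/2}\leq1$ to turn $3/2$ into $2$ in the exponent. You instead evaluate the radial integral from scratch: spherical coordinates, the incomplete gamma function $\Gamma(d,r^2d/4)$ with the standard tail bound, and Stirling's lower bound on $\Gamma(d/2)$ to absorb the surface-area and polynomial factors into the exponent. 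What your route buys is self-containedness (no external citation); what the paper's route buys is brevity and, nominally, validity on the full stated range $r>2$. I verified your bookkeeping: the $2^{d/2}$ you appear to drop when writing $\frac{d^{d/2-1}}{\Gamma(d/2)}\lesssim e^{d/2}/\sqrt d$ is exactly compensated by having factored out $(2\pi)^{d/2}$ rather than $\pi^{d/2}=(2\pi)^{d/2}2^{-d/2}$ from $S_{d-1}$, and the residual constant is comfortably below $e^{3d/2}$ for all $d\geq1$ once $r^2>4$, so the slack in the constant $2$ is indeed sufficient.

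One genuine caveat: the lemma is stated for $r>2$, but your bound $\Gamma(d,R)\leq\frac{R}{R-d+1}R^{d-1}e^{-R}\leq 2R^{d-1}e^{-R}$ uses $R=r^2d/4\geq 9d$, i.e.\ $r\geq6$. For $2<r<6$ the ratio $\frac{R}{R-d+1}\leq\frac{r^2}{r^2-4}$ blows up as $r\to2^+$ and is not bounded by $2$, so your argument as written does not cover the full stated hypothesis. This is harmless downstream --- the lemma is only ever invoked through Corollary~\ref{lma:intUUc} and Theorem~\ref{corr:main}, both of which assume $r\geq6$ --- but you should either restrict your statement to $r\geq6$ or track the $r$-dependent constant explicitly. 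Finally, your remark about the Jacobian $\det(H)^{-1/2}$ is well taken: the paper's own change of variables in the proof silently drops this factor too, and it cancels in the ratio of Corollary~\ref{lma:intUUc}, so normalizing to $H=I_d$ as you do is the cleanest way to state what is actually being proved.
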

\begin{proof}\eqref{nllgrowth} gives
\beqs\label{Ucap}
\int_{\UU^c\cap\Theta}e^{n(\f(\MLE)-\f(\theta))}d\theta&\leq \int_{|\theta-\MLE|_H\geq r\sqrt{d/n}}\e\l(-\frac r4\sqrt{nd}|\theta-\MLE|_H\r)d\theta\\
&=n^{-d/2}\int_{|\theta|\geq r\sqrt d}\e\l(-\frac r4\sqrt{d}|\theta|\r)d\theta.
\eeqs
Using Lemma~\ref{aux:gamma} with $a=r$ and $b=r/4$ (the lemma applies since we assumed $r^2/4>1$), we get
\beq\label{Ibd}
\int_{|\theta|\geq r\sqrt d}\e\l(-\frac r4\sqrt{d}|\theta|\r)d\theta\leq(2\pi)^{d/2}d\e\l(\l[\frac32+\log r - \frac{r^2}{4}\r]d\r).
\eeq To conclude, bound $de^{3d/2}$ by $e^{2d}$, noting that $de^{-d/2}\leq1$ for all $d$.
\end{proof}
\begin{lemma}\label{lma:intU}Let $\UU=\UU(r)$ for some $r\geq2$. Then
\beq
\int_{\UU}e^{n(\f(\MLE)-\f(\theta))}d\theta\geq \frac12(2\pi)^{d/2}(3n/2)^{-d/2}.\eeq 
\end{lemma}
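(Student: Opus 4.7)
The plan is to obtain a quadratic \emph{upper} bound on $\f(\theta) - \f(\MLE)$ inside $\UU(r)$, which will convert the integral into (essentially) a truncated Gaussian integral, and then show the truncation loses at most a factor of $2$ for $r \geq 2$.

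First, I would observe that the assumption $r \delta_3(r)\sqrt{d/n} \leq 1/2$ together with the definition~\eqref{app:del3redef} of $\delta_3(r)$ gives, for every $\theta \in \UU(r)$,
\[
\|\nabla^2 \f(\theta) - H\|_H \leq \delta_3(r)\, |\theta-\MLE|_H \leq r\delta_3(r)\sqrt{d/n} \leq \tfrac{1}{2},
\]
so $\nabla^2 \f(\theta) \preceq \tfrac{3}{2} H$ on $\UU(r)$. This is exactly the upper-bound companion of~\eqref{nll-lowerbd} in Lemma~\ref{lingrow}. Taylor expanding $\f$ about $\MLE$ with integral remainder, and using $\nabla \f(\MLE) = 0$, I then get $\f(\theta) - \f(\MLE) \leq \tfrac{3}{4}|\theta-\MLE|_H^2$ throughout $\UU(r)$, so
\[
\int_{\UU} e^{n(\f(\MLE)-\f(\theta))}\,d\theta \;\geq\; \int_{\UU} e^{-\frac{3n}{4}|\theta-\MLE|_H^2}\,d\theta.
\]

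Next I would perform a change of variables analogous to the one in the proof of Lemma~\ref{lma:intUc}, namely $\phi = \sqrt{3n/2}\,H^{1/2}(\theta - \MLE)$, which rescales the region $\{|\theta-\MLE|_H \leq r\sqrt{d/n}\}$ to $\{|\phi| \leq r\sqrt{3d/2}\}$ and makes the exponent $-|\phi|^2/2$. This converts the right-hand side into
\[
(3n/2)^{-d/2}\int_{|\phi|\leq r\sqrt{3d/2}} e^{-|\phi|^2/2}\,d\phi
\;=\; (2\pi)^{d/2}(3n/2)^{-d/2}\,\PP\bigl(|Z|\leq r\sqrt{3d/2}\bigr),
\]
where $Z\sim\mathcal N(0,I_d)$ (following the paper's convention in Lemma~\ref{lma:intUc} of suppressing the $\det(H)^{-1/2}$ Jacobian, which cancels when Lemmas~\ref{lma:intUc} and~\ref{lma:intU} are combined to bound the ratio $\pi_\f(\UU^c)$).

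Finally, it remains to show $\PP(|Z|\leq r\sqrt{3d/2}) \geq 1/2$ whenever $r\geq 2$ and $d\geq 1$. Since $r\sqrt{3d/2}\geq \sqrt{6d}$, it suffices to verify $\PP(|Z|^2 \leq 6d) \geq 1/2$, which follows immediately from Markov's inequality applied to $|Z|^2$ (whose mean is $d$, giving $\PP(|Z|^2 > 2d)\leq 1/2$, hence $\PP(|Z|^2 \leq 6d)\geq 1/2$). Putting the three steps together yields the claimed bound. I do not anticipate any serious obstacle here; the only minor care needed is in tracking the constant $3/2$ through the change of variables so that the exponent becomes precisely $|\phi|^2/2$ and the radius of integration $r\sqrt{3d/2}$ is large enough for the crude $1/2$ lower bound on the Gaussian mass to apply at $r=2$.
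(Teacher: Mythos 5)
Your proposal is correct and follows essentially the same route as the paper: the bound $\|\nabla^2\f(\theta)-H\|_H\leq 1/2$ on $\UU(r)$ gives the quadratic upper bound $\f(\theta)-\f(\MLE)\leq\frac34|\theta-\MLE|_H^2$, the change of variables reduces the integral to $(2\pi)^{d/2}(3n/2)^{-d/2}\PP(|Z|\leq r\sqrt{3d/2})$ (with the same suppressed Jacobian convention), and the final Gaussian mass is bounded below by $1/2$. The only cosmetic difference is that you use Markov's inequality on $|Z|^2$ for the last step where the paper uses a Gaussian concentration bound; both are valid.
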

\begin{proof}For all $|\theta-\MLE|_H\leq r\sqrt{d/n}$, it holds
\beqs\label{nllno}
\f(\theta)-\f(\MLE) &= \frac12\lla\nabla^2\f(\xi), (\theta-\MLE)^{\otimes 2}\rra\leq \frac34|\theta-\MLE|_H^2
\eeqs for a point $\xi$ on the interval between $\theta$ and $\MLE$. Here, we have used that $\sup_{\theta\in\UU}\|\nabla^2\f(\theta)\|_H\leq 3/2$, since $\|\nabla^2\f(\theta)-H\|_H\leq1/2$, shown in the proof of Lemma~\ref{lingrow}. Hence
\beqs\label{intUL}
\int_{\UU}e^{n(\f(\MLE)-\f(\theta))}d\theta&\geq \int_\UU\e\l(-\frac{3n}4\l|\theta-\MLE\r|_H^2\r)d\theta\\
& = (3n/2)^{-d/2}\int_{|u|\leq r\sqrt{3/2}\sqrt d}\e\l(-|u|^2/2\r)du\\
&= (3n/2)^{-d/2}(2\pi)^{d/2}\PP(|Z|\leq r\sqrt{3/2}\sqrt d).\eeqs 

 To conclude the proof note that when $r\geq2$ we have $\PP(|Z|\leq r\sqrt{3/2}\sqrt d)\geq \PP\l(|Z|\leq \sqrt6\sqrt d\r)\geq 1-e^{-d(\sqrt6-1)^2/2}\geq 1/2$ for all $d\geq1$.
\end{proof}
\begin{corollary}\label{lma:intUUc}
Let $\UU=\UU(r)$. If $r\geq6$ then
$$
\frac{\int_{\Theta\setminus\UU}e^{-n\f(\theta)}d\theta}{\int_{\UU}e^{-n\f(\theta)}d\theta} \leq 2e^{-5dr^2/36}\leq 2e^{-dr^2/9}.
$$ 
\end{corollary}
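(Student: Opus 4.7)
The plan is to simply divide the upper bound from Lemma~\ref{lma:intUc} by the lower bound from Lemma~\ref{lma:intU}, after rewriting the ratio in a convenient form, and then check a scalar inequality in $r$.

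First, I would multiply numerator and denominator of $\int_{\Theta \setminus \UU}e^{-nf}/\int_{\UU}e^{-nf}$ by $e^{nf(\hat\theta)}$ to express it as the ratio of $\int_{\UU^c \cap \Theta}e^{n(f(\hat\theta)-f(\theta))}d\theta$ over $\int_{\UU}e^{n(f(\hat\theta)-f(\theta))}d\theta$; this puts the two previous lemmas in a form I can apply directly. Substituting the bounds from Lemma~\ref{lma:intUc} (numerator) and Lemma~\ref{lma:intU} (denominator), the factors of $(2\pi)^{d/2}$ cancel and the $n^{-d/2}$ factors combine to produce $(3/2)^{d/2}$, so the ratio is bounded by
\[
2 \cdot \exp\!\left(\left[\tfrac{1}{2}\log(3/2) + 2 + \log r - \tfrac{r^2}{4}\right]d\right).
\]

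The only remaining task is to verify that for $r \geq 6$ the exponent satisfies
\[
\tfrac{1}{2}\log(3/2) + 2 + \log r - \tfrac{r^2}{4} \leq -\tfrac{5r^2}{36},
\]
which rearranges to $\tfrac{1}{2}\log(3/2) + 2 + \log r \leq \tfrac{r^2}{9}$. At $r=6$ both sides are approximately $2.99$ and $4$, so the inequality holds; differentiating shows the right side grows like $2r/9$ while the left grows only like $1/r$, so the inequality is preserved for all $r\geq 6$. This yields the first stated bound, and the final inequality $2e^{-5dr^2/36}\leq 2e^{-dr^2/9}$ follows from $5/36 > 1/9 = 4/36$.

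No step is truly hard here; the only care needed is in keeping track of the $n^{-d/2}$ and $(3/2)^{d/2}$ factors so they fold cleanly into the exponent, and in checking the scalar inequality at the endpoint $r=6$ (where the constants are tight up to a few hundredths).
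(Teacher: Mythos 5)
Your proof is correct and follows essentially the same route as the paper: divide the upper bound of Lemma~\ref{lma:intUc} by the lower bound of Lemma~\ref{lma:intU}, fold the $(3/2)^{d/2}$ factor into the exponent, and verify $\tfrac12\log(3/2)+2+\log r\leq r^2/9$ for $r\geq6$. One small slip: at $r=6$ the left side is approximately $3.99$, not $2.99$, so the endpoint check is genuinely tight (it passes by only about $0.006$) — worth stating accurately, but the conclusion stands.
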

\begin{proof}
Dividing the upper bound on the numerator by the lower bound on the denominator gives
$$
\frac{\int_{\Theta\setminus\UU}e^{-n\f(\theta)}d\theta}{\int_{\UU}e^{-n\f(\theta)}d\theta} \leq 2\e\l(d\l[2+\log(\sqrt{3/2})+\log r - \frac{r^2}{4}\r]\r).
$$
We now note that $2+\log(\sqrt{3/2})\leq 2.21$, and that 
\beq\label{FML03}
2.21+\log r\leq\frac{r^2}{9}\quad\forall r\geq6.
\eeq Substituting this bound into the exponential concludes the proof.
\end{proof}
\begin{corollary}\label{corr:tail}
Let $r\geq6$. Then 
$$T:= \frac{\int_{\Theta\setminus\UU}e^{-n\f}}{\int_\UU e^{-n\f}} + \gamma_\f(\UU^c)\leq 3\e\l(-dr^2/9\r).$$
\end{corollary}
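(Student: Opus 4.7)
The bound splits as a sum of two terms which have already been essentially prepared. My plan is to handle each separately and then combine.

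For the first term $\int_{\Theta\setminus\UU}e^{-n\f}/\int_{\UU}e^{-n\f}$, I would simply invoke Corollary~\ref{lma:intUUc}, which gives the bound $2e^{-dr^2/9}$ for $r\geq 6$ directly. No additional work is needed here.

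For the Gaussian tail term $\gamma_\f(\UU^c)$, I would use the affine change of variables implicit in the definition $\gamma_\f=\mathcal N(\MLE,(nH)^{-1})$. If $\theta\sim\gamma_\f$, then $Z:=\sqrt n H^{1/2}(\theta-\MLE)\sim\mathcal N(0,I_d)$, so
\beq
\gamma_\f(\UU^c)=\PP\l(|\theta-\MLE|_H>r\sqrt{d/n}\r)=\PP\l(|Z|>r\sqrt d\r).
\eeq
A standard concentration bound for the norm of a standard Gaussian vector (e.g.\ the Lipschitz concentration of $|Z|$ around its mean, which is at most $\sqrt d$) gives $\PP(|Z|\geq (1+t)\sqrt d)\leq e^{-t^2 d/2}$ for $t\geq 0$. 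Taking $t=r-1$, this yields $\gamma_\f(\UU^c)\leq e^{-(r-1)^2 d/2}$.

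The only minor arithmetic to check is that $(r-1)^2/2\geq r^2/9$ whenever $r\geq 6$; equivalently, $7r^2-18r+9\geq0$, which is true for all $r$ beyond the larger root $(18+\sqrt{72})/14\approx 1.89$ and in particular for $r\geq 6$. Hence $\gamma_\f(\UU^c)\leq e^{-dr^2/9}$. Adding this to the $2e^{-dr^2/9}$ from Corollary~\ref{lma:intUUc} gives the claimed $3e^{-dr^2/9}$. I don't anticipate any real obstacle; this is a routine assembly step, with the only subtlety being the elementary inequality $(r-1)^2/2\geq r^2/9$ that lets the two Gaussian-style tail bounds share a common exponent.
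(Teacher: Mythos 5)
Your proof is correct and follows essentially the same route as the paper: Corollary~\ref{lma:intUUc} for the ratio of integrals, the standard Gaussian tail bound $\PP(|Z|\geq r\sqrt d)\leq e^{-d(r-1)^2/2}$ for $\gamma_\f(\UU^c)$, and an elementary check that the exponent dominates $dr^2/9$. The only cosmetic difference is that the paper verifies the last step via $(r-1)^2\geq (r/2)^2$ for $r\geq2$ rather than your direct quadratic inequality.
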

\begin{proof}
From the above corollary and a standard Gaussian tail bound (i.e. using that $\gamma_\f(\UU(r)^c)=\PP(|Z|\geq r\sqrt d)\leq \e(-d(r-1)^2/2)$) we get
\beqs\label{T1}
T \leq 2e^{-dr^2/9}+ \e\l(-\frac{d}2\l(r-1\r)^2\r)\leq 3e^{-dr^2/9},
\eeqs
since $(d/2)(r-1)^2\geq (d/2)(r/2)^2=dr^2/8$. 
\end{proof}

\section{Proofs from Section~\ref{sec:bvmprelim}}\label{app:towardsbvm}
\begin{defn}\label{def:Udel3delp}On the event $\{\exists!\,\mathrm{MLE}\,\MLE\}$, define $H=\nabla^2\nll(\MLE)$ and $\UU(r)=\{|\theta-\MLE|_H\leq r\sqrt{d/n}\}$. Assuming $\UU(r)\subset\Theta$, define 
\beq
\delta_3(r)=\sup_{\theta\in\UU(r)}\frac{\|\nabla^2\nll(\theta)-\nabla^2\nll(\MLE)\|_H}{|\theta-\MLE|_H}.
\eeq\end{defn} 
Next, recall the definition of the event $\bar E(s,\epsilon_2)$:
$$\bar E(s,\epsilon_2)=\{\exists!\,\mathrm{MLE}\,\MLE,\, |\MLE-\ground|_\F\leq s\sqrt{d/n}\}\cap E_2(\epsilon_2)\cap E_3(2s).$$
\begin{lemma}\label{lma:UtoUdet}
Suppose~\eqref{bUcond} holds. Then on the event $\bar E(s,\epsilon_2)$, it holds
$$\nabla^2\nll(\MLE)\succeq \frac14\F,$$ 
\beq\label{taus0}
\begin{gathered}
\UU(s/2)\subseteq\bU(2s),\qquad\delta_3(s/2)\leq8\bdelta_3(2s),\end{gathered}
\eeq  \beq\label{taus}(s/2)\delta_3(s/2)\sqrt{d/n}\leq 1/2.\eeq
\end{lemma}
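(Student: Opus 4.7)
The strategy is to first establish the Hessian lower bound $H \succeq (1/4)\F$; the remaining three claims will then follow by a routine conversion between the $\F$- and $H$-weighted norms. The main inputs are the events $E_2(\epsilon_2)$ and $E_3(2s)$, the MLE proximity $|\MLE-\ground|_\F\leq s\sqrt{d/n}$, and the third inequality $2s\bdelta_3(2s)\sqrt{d/n}\leq 1/4$ in~\eqref{bUcond} (together with the standing hypothesis $\epsilon_2 \leq 1/2$ from Lemma~\ref{prop:lapMLE}).

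For the Hessian lower bound, I would use the triangle inequality through $\nabla^2\nll(\ground)$: since $\MLE,\ground\in\bU(s)\subseteq\bU(2s)$, the event $E_3(2s)$ yields $\|\nabla^2\nll(\MLE)-\nabla^2\nll(\ground)\|_\F \leq \bdelta_3(2s)\cdot s\sqrt{d/n} \leq 1/8$, and $E_2(\epsilon_2)$ yields $\|\nabla^2\nll(\ground)-\F\|_\F \leq 1/2$. Hence $\|H-\F\|_\F\leq 5/8$, i.e. $\F^{-1/2}H\F^{-1/2}\succeq (3/8)I$, giving $H\succeq (1/4)\F$. This lower bound produces two norm comparisons: $|u|_\F \leq 2|u|_H$ and $\|A\|_H \leq 4\|A\|_\F$ for symmetric $A$, both immediate from the identities $|u|_M^2 = u^\tr M u$ and $\|A\|_M = \sup_u |u^\tr A u|/|u|_M^2$, together with the PSD ordering $4H \succeq \F$.

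These comparisons deliver the three remaining claims in quick succession. For $\UU(s/2)\subseteq\bU(2s)$: given $\theta\in\UU(s/2)$, $|\theta-\MLE|_\F \leq 2|\theta-\MLE|_H \leq s\sqrt{d/n}$, so $|\theta-\ground|_\F \leq 2s\sqrt{d/n}$ by the triangle inequality. For the $\delta_3$ bound: any $\theta\in\UU(s/2)$ and $\MLE$ both lie in $\bU(2s)$, so $E_3(2s)$ gives $\|\nabla^2\nll(\theta)-\nabla^2\nll(\MLE)\|_\F \leq \bdelta_3(2s)|\theta-\MLE|_\F$; converting the numerator to $\|\cdot\|_H$ multiplies by $4$ and the denominator by $2$, yielding $\delta_3(s/2)\leq 8\bdelta_3(2s)$. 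Finally, $(s/2)\delta_3(s/2)\sqrt{d/n} \leq 4s\bdelta_3(2s)\sqrt{d/n}\leq 1/2$, again by~\eqref{bUcond}. I expect no genuine obstacle: the proof is careful norm bookkeeping, with the constants $2$, $4$, and $8$ arranged so that the stated hypotheses provide precisely the needed slack.
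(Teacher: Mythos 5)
Your proposal is correct and follows essentially the same route as the paper: establish $H\succeq\frac14\F$ via the triangle inequality through $\nabla^2\nll(\ground)$ using $E_2$, $E_3(2s)$ and the MLE proximity, then convert between $\F$- and $H$-weighted norms (factors $2$ for vectors, $4$ for matrices, hence $8$ for the Lipschitz ratio). The only cosmetic difference is that the paper deduces the Hessian bound from $\nabla^2\nll(\ground)\succeq\frac12\F$ minus a $\frac14\F$ perturbation rather than from $\|H-\F\|_\F\leq 5/8$, which is immaterial.
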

See Section~\ref{app:towardsbvm:aux} for the proof.
\begin{proof}[Proof of Lemma~\ref{prop:lapMLE}]We split up the proof into steps. First note that by the assumption $\epsilon_2\leq1/2$, and recalling that $\F=\nabla^2\nll(\ground)$, we have $\nabla^2\nll(\ground)\succeq \frac12 \F$ on $E_2(\epsilon_2)$. Also, recall that $\nabla\nllinfty(\ground)=0$.
\begin{proof}[Proof that unique MLE $\MLE$ exists and satisfies $|\MLE-\ground|_\F\leq s\sqrt{d/n}$.]
We check the assumptions of Corollary~\ref{corr:lang2} with $h=\nll$, $y_0=\ground$, $q=s\sqrt{d/n}$, and $\lambda=1/2$. The condition $\nabla^2\nll(\ground)\succeq \lambda \F$ and $\|\nabla\nll(\ground)\|_\F\leq 2\lambda  q$ are both satisfied on $E(s,\epsilon_2)$. The condition $\|\nabla^2\nll(\theta)-\nabla^2\nll(\ground)\|_\F\leq\lambda/4$ for all $|\theta-\ground|_\F\leq q$ is also satisfied on this event, since
\beqs\label{nlltg}
\|\nabla^2\nll(\theta)-\nabla^2\nll(\ground)\|_\F&\leq s\bdelta_3(s)\sqrt{d/n}\leq1/8=\lambda/4.\eeqs Therefore, the assumptions of Corollary~\ref{corr:lang2} are satisfied so we conclude there exists $\MLE$ such that $\nabla\nll(\MLE)=0$ and $|\MLE-\ground|_\F\leq q= s\sqrt{d/n}$. Also, we immediately get by~\eqref{nlltg} that $\|\nabla^2\nll(\MLE) -\nabla^2\nll(\ground)\|_\F \leq 1/8$. Using this and the fact that $\nabla^2\nll(\ground)\succeq \frac12 \F$ we get $\nabla^2\nll(\MLE) \succeq \frac38\F$. Hence $\MLE$ is a strict local minimizer, and therefore a global minimizer, since $\nll$ is convex.\end{proof}
This first part of the proof shows that $E(s,\epsilon_2)\subset\bar E(s,\epsilon_2)$. In the rest of the proof, we show~\eqref{pretv1}-\eqref{pretv2} hold on $\bar E(s,\epsilon_2)$.
\begin{proof}[Proof of Bound~\eqref{pretv1}]By Assumption~\ref{A1}, $\nll$ is convex and $\nll\in C^2(\Theta)$ with probability 1. Also, $\nll$ has a unique global minimizer $\MLE$ on $\bar E(s,\epsilon_2)$. Let $r=s/2$, so that $r\geq6$. By Lemma~\ref{lma:UtoUdet}, it holds on $\bar E(s,\epsilon_2)$ that $\UU(r)\subset\bU(2s)\subset\Theta$ and $r\delta_3(r)\sqrt{d/n}\leq 1/2$. Therefore, we can apply Theorem~\ref{corr:main} with $f=\nll$ to get that $\TV(\pi_\nll,\gamma_\nll)\leq \delta_3(r)\frac{d}{\sqrt {2n}}+ 3e^{-dr^2/9}$. Using that $\delta_3(r)\leq 8\bdelta_3(2s)$ on $\bar E(s,\epsilon_2)$ and that $e^{-dr^2/9}=e^{-ds^2/36}$ concludes the proof.\end{proof}

\begin{proof}[Proof of Bound~\eqref{pretv2}]Let $r= s/2$, so that $r\geq6$. Since $2s\bdelta_3(2s)\sqrt{d/n}\leq1/4$ and $\nabla^2\nll(\ground)\succeq\frac12\F$ on $\bar E(s,\epsilon_2)$, it follows that $\nabla^2\nll(\theta)\succeq\frac14\F$ for all $\theta\in\bU(2s)$. We apply Lemma~\ref{lma:TVprelim} with $\UU=\bU(2s)$, $\mu=\pi_\nll$, $\nnu=\pi_v$, $\lambda=1/4$, and $\F$ instead of $H$. This gives
\beq
\TV(\pi_v,\pi_\nll)\leq \frac{\int_{\Theta\setminus\bU(2s)}e^{-n\nll}\prior}{\int_{\bU(2s)}e^{-n\nll}\prior} + \frac{\int_{\Theta\setminus\bU(2s)}e^{-n\nll}}{\int_{\bU(2s)}e^{-n\nll}} + \sqrt n\E_{\theta\sim\pi_v\vert_\UU}\l[\|\nabla(n^{-1}\nlp)\|_\F^2\r]^{\frac12}.\eeq In the last term, we simply bound the square root of the expectation of the square by the maximum of $\|\nabla(n^{-1}\nlp(\theta))\|_\F$ over $\theta\in\bU(2s)$, which is $n^{-1}\bdelta_{01}(2s)$ by definition. Thus
\beq\label{FML0}
\sqrt n\E_{\theta\sim\nu\vert_\UU}\l[\|\nabla(n^{-1}\nlp)\|_\F^2\r]^{\frac12}\leq\frac{1}{\sqrt n}\bdelta_{01}(2s).
\eeq

Now, by definition of $\Cpri^*$ and $\bdelta_{01}$ and using that $\UU(r)\subset\bU(2s) $ on $\bar E(s,\epsilon_2)$ by Lemma~\ref{lma:UtoUdet}, we have
\beqs\label{Ucprior0}
\frac{\int_{\Theta\setminus\bU(2s)}e^{-n\nll}\prior}{\int_{\bU(2s)} e^{-n\nll}\prior} &\leq \frac{\sup_{\theta\in\Theta}\prior(\theta)/\prior(\ground)}{\inf_{\theta\in\bU(2s)}\prior(\theta)/\prior(\ground)}\frac{\int_{\Theta\setminus\bU(2s)} e^{-n\nll}}{\int_{\bU(2s)} e^{-n\nll}}\\
&\leq \e(d\Cpri^*+2s\bdelta_{01}(2s)\sqrt{d/n})\frac{\int_{\Theta\setminus\bU(2s)} e^{-n\nll}}{\int_{\bU(2s)} e^{-n\nll}} \\
&\leq \e(d\Cpri^*+ds/3)\frac{\int_{\Theta\setminus\UU(r)} e^{-n\nll}}{\int_{\UU(r)} e^{-n\nll}}.
\eeqs
In the last line we used that $\bdelta_{01}(2s)\leq \sqrt{nd}/6$. Hence
\beqs\label{tailFML}
 \frac{\int_{\Theta\setminus\bU(2s)}e^{-n\nll}\prior}{\int_{\bU(2s)}e^{-n\nll}\prior} &+ \frac{\int_{\Theta\setminus\bU(2s)}e^{-n\nll}}{\int_{\bU(2s)}e^{-n\nll}}\leq 2 \e\l(d[\Cpri^* + s/3]\r)\frac{\int_{\Theta\setminus\UU(r)} e^{-n\nll}}{\int_{\UU(r)} e^{-n\nll}}\\
\leq 4\e\bigg(d&\l[\Cpri^*+s/3 - 5r^2/36\r]\bigg)\leq 4\e\l(d(\Cpri^*-s^2/144)\r).
\eeqs We used~\eqref{Ucprior0} to get the first inequality and Corollary~\ref{lma:intUUc} to get the second inequality. To get the third inequality we used that $5r^2/36=5s^2/144$ and that $s/3 \leq 4s^2/144$ when $s\geq12$. Combining~\eqref{tailFML} with~\eqref{FML0} concludes the proof.


%
%
\end{proof}
\begin{proof}[Proof of Bound~\eqref{pretv3}]
We use Lemma~\ref{lma:TVgauss} with $\Sigma_1^{-1}=n\nabla^2\nllinfty(\ground)=n\F$ and $\Sigma_2^{-1}=n\nabla^2\nll(\MLE)$. We have $\nabla^2\nll(\MLE)\succeq \frac14\F$, so we can take $\tau=1/4$. Also, we have 
\beqsn
\|n\nabla^2\nll(\MLE)-n\F\|_{n\F}&=\|\nabla^2\nll(\MLE)-\F\|_{\F}\leq \|\nabla^2\nll(\MLE)-\nabla^2\nll(\ground)\|_\F + \|\nabla^2(\nll-\nllinfty)(\ground)\|_\F \\
&\leq s\bdelta_3(s)\sqrt{d/n} +\epsilon_2 =:\epsilon
\eeqsn on $\bar E(s,\epsilon_2)$. Substituting the above values of $\tau$ and $\epsilon$ into~\eqref{TVD12} concludes the proof of~\eqref{pretv3}.
\end{proof}
\end{proof}

\begin{proof}[Proof of bounds in Example~\ref{ex:prior}]
The bounds for the flat prior are trivial. For the Gaussian prior $\prior=\mathcal N(\mu, \Sigma)$, we have $\log\prior(\theta)=-\frac12(\theta-\mu)^\tr \Sigma^{-1}(\theta-\mu)+\mathrm{const}$, so that
\beqs
d\Cpri^* =\sup_{\theta\in\R^d}\log\prior(\theta)-\log\prior(\ground) =  \frac12(\ground-\mu)^\tr \Sigma^{-1}(\ground-\mu)=\frac12|\ground-\mu|_{\Sigma^{-1}}^2 .
\eeqs
Next, we have $\nabla\log\prior(\theta)=\Sigma^{-1}(\theta-\mu)$, and therefore 
$$\|\nabla\log\prior(\theta)\|_\F=|\F^{-1/2}\Sigma^{-1}(\theta-\mu)| \leq\|\Sigma^{-1}\|_\F|\theta-\mu|_\F.$$ Hence
$$\bdelta_{01}(r)\leq\|\Sigma^{-1}\|_\F(|\mu-\ground|_\F+r\sqrt{d/n}).$$ For the multivariate student's t prior $\prior=t_\nu(\mu, \Sigma)$, we have $\log\prior(\theta)=-\frac{\nu+d}{2}\log\l(1+\frac1\nu|\theta-\mu|^2_{\Sigma^{-1}}\r)+\mathrm{const}$, so 
\beqs
\log\prior(\theta)&-\log\prior(\ground) = \frac{\nu+d}{2}\log\l(\frac{1+\frac1\nu|\ground-\mu|^2_{\Sigma^{-1}}}{1+\frac1\nu|\theta-\mu|^2_{\Sigma^{-1}}}\r)\\
&\leq  \frac{\nu+d}{2}\log\l(1+\frac1\nu|\ground-\mu|^2_{\Sigma^{-1}}\r)\leq\frac{\nu+d}{2\nu}|\ground-\mu|^2_{\Sigma^{-1}}.
\eeqs Hence $\Cpri^*\leq \frac{\nu+d}{2\nu d}|\ground-\mu|^2_{\Sigma^{-1}}$. Finally,
$$\nabla\log\prior(\theta) = -\frac{\nu+d}{\nu}\frac{\Sigma^{-1}(\theta-\mu)}{1+\frac1\nu|\theta-\mu|^2_{\Sigma^{-1}}},$$ so
\beqs
\bdelta_{01}(r)&=\sup_{\theta\in\UU(r)}\|\nabla\log\prior(\theta)\|_\F \leq \frac{\nu+d}{\nu}\sup_{\theta\in\UU(r)}\|\Sigma^{-1}(\theta-\mu)\|_{\F}\\
&\leq \frac{\nu+d}{\nu}\|\Sigma^{-1}\|_\F(|\mu-\ground|_\F+r\sqrt{d/n}).
\eeqs
\end{proof}

\subsection{Auxiliary results}\label{app:towardsbvm:aux}
\begin{proof}[Proof of Lemma~\ref{lma:UtoUdet}]Using~\eqref{bUcond}, the definition of event $\bar E(s,\epsilon_2)$, and Taylor's theorem, we have 
\beqsn
\|\nabla^2\nll(\MLE)-\nabla^2\nll(\ground)\|_\F &\leq \sup_{|\theta-\ground|_\F\leq s\sqrt{d/n}}\|\nabla^2\nll(\theta)-\nabla^2\nll(\ground)\|_\F\\
&\leq s\bdelta_3(s)\sqrt{d/n} \leq 1/4.\eeqsn
Since $\nabla^2\nll(\ground)\succeq\frac12\F$ on $\bar E(s,\epsilon_2)$ (using that $\epsilon_2\leq1/2$), we conclude that $H=\nabla^2\nll(\MLE)\succeq\frac14F$. This proves the first statement. Next, fix $\theta\in\UU(s/2)$, so that $|\theta-\MLE|_H\leq (s/2)\sqrt{d/n}$. Using that $H\succeq\frac14 \F$ and $|\MLE-\ground|_\F\leq s\sqrt{d/n}$ on $\bar E(s,\epsilon_2)$, we then have
$$
|\theta-\ground|_\F \leq |\theta-\MLE|_\F +|\MLE-\ground|_\F \leq 2|\theta-\MLE|_H + s\sqrt{d/n} \leq 2s\sqrt{d/n}.
$$ We conclude that $\UU(s/2)\subseteq \bU(2s)$. Next, using that $\UU(s/2)\subset\bU(2s)$ and that $\MLE\in\bU(s)\subset\bU(2s)$ on event $\bar E(s,\epsilon_2)$, we have
\beqsn
\delta_3(s/2)&=\sup_{\theta\in\UU(s/2)}\frac{\|\nabla^2\nll(\theta)-\nabla^2\nll(\MLE)\|_H}{|\theta-\MLE|_H} \leq \sup_{\theta,\theta'\in \bU(2s)}\frac{\|\nabla^2\nll(\theta)-\nabla^2\nll(\theta')\|_H}{|\theta-\theta'|_H} \\
&\leq 8\sup_{\theta,\theta'\in \bU(2s)}\frac{\|\nabla^2\nll(\theta)-\nabla^2\nll(\theta')\|_\F}{|\theta-\theta'|_\F}\leq 8\bdelta_3(2s).
\eeqsn Here, we used that $H\succeq\frac14\F$ on $\bar E(s,\epsilon_2)$.
Finally,
$$(s/2)\delta_3(s/2)\sqrt{d/n} \leq (s/2)(8\bdelta_3(2s))\sqrt{d/n} =4s\bdelta_3(2s)\sqrt{d/n}\leq 1/2,$$ proving~\eqref{taus}.
\end{proof}

\section{Proofs from Section~\ref{sec:log} (GLMs)}\label{app:sec:logreg}
\begin{proof}[Proof of Lemma~\ref{bern-gauss}]Let 
$$Q=\frac1n\sum_{i=1}^nX_i\l[Y_i-\nabla\psi(X_i^\tr \ground)\r],$$ treating $\nabla\psi$ as a column vector in $\R^k$. We are interested in bounding from above the tail probability $\PP(\|Q\|_{\F}\geq r\sqrt{d/n})$. Let $\mathcal N$ be a $1/2$-net of the sphere in $\R^d$, i.e. a collection of unit norm vectors $u$ such that $\inf_{u\in\mathcal N}|u-w|\leq 1/2$ for all $|w|=1$. By standard arguments, we can take $\mathcal N$ to have at most $5^d$ elements. Next, let $\mathcal N_{\F}:=\{{\F}^{-1/2}u\mid u\in\mathcal N\}$. Then
$$\|Q\|_\F\leq 2\sup_{u\in\mathcal N_\F}u^\tr Q,$$ and hence
\beq\label{pb1}\PP(\|Q\|_{\F}\geq s\sqrt{d/n})\leq \PP(\sup_{u\in\mathcal N_{\F}}u^\tr Q\geq (s/2)\sqrt{d/n})\leq 5^d\sup_{|u|_{\F}=1}\PP(u^\tr Q\geq (s/2)\sqrt{d/n}),\eeq by a union bound. Now, fix $|u|_{\F}=1$. Then
\beqs\label{pb2}
\PP&(u^\tr Q\geq (s/2)\sqrt{d/n}) =\PP\l(\sum_{i=1}^nu^\tr X_i(Y_i-\nabla\psi(X_i^\tr\ground))\geq (s/2)\sqrt{nd}\r)\\
&\leq\e\l(-\tau \frac s2\sqrt{nd}\r)\prod_{i=1}^n\E\l[\exp(\tau u^\tr X_i(Y_i-\nabla\psi(X_i^\tr\ground))\r]\\
&=\exp\l(-\tau \frac s2\sqrt{nd}+\sum_{i=1}^n\l[\psi(X_i^\tr(\ground + \tau u))-\psi(X_i^\tr\ground)-\tau u^\tr X_i\nabla\psi(X_i^\tr\ground)\r]\r)\\
&=\exp\l(-\tau \frac s2\sqrt{nd}+n\l[h_u(\tau)-h_u(0)-\tau h_u'(0)\r]\r),
\eeqs where
$$h_u(t) = \frac1n\sum_{i=1}^n\psi(X_i^\tr\ground+ tX_i^\tr u).$$ 
We used Chernoff's inequality and standard properties of exponential families in the second to last line, which is valid for any $\tau$ such that $X_i^\tr(\ground + \tau u)\in\Omega$ for all $i$. In particular, since $\bU(2s)\subset\Theta$ we know by the definition of $\Theta$ that $X_i^\tr(\ground+ \tau u)\in\Omega$ for all $i$ and all $\tau\leq 2s\sqrt{d/n}$, $|u|_\F=1$. We take $\tau=(s/2)\sqrt{d/n}$. This gives, for some $\xi\in[0,\tau]$,
\beqs\label{pb3}
\sup_{|u|_\F=1}\l\{h_u(\tau)-h_u(0)-\tau h_u'(0)\r\} &=\sup_{|u|_\F=1} \frac{\tau^2}{2}\l(h_u''(0)+\frac{\tau}{3}h_u'''(t\xi)\r)\\
&\leq \frac{\tau^2}{2}\l(1+\frac{\tau}{3}\sup_{|u|_\F=1,t\in[0,\tau]}h_u'''(t)\r)\leq  \frac{\tau^2}{2}\l(1+\frac{\tau}{3}\bdelta_3(s/2)\r).\eeqs To get the first inequality we used that $h_u''(0) = u^\tr\F u$. To get the second inequality, we used that
\beqsn
h_u'''(t) =\frac1n\sum_{i=1}^n\la\nabla^3\psi(X_i^\tr(\ground+ tu)), (X_i^\tr u)^{\otimes 3}\ra =\la\nabla^3\nll(\ground+tu), u^{\otimes3}\ra \leq \|\nabla^3\nll(\ground+tu)\|_{\F}\eeqsn for all $|u|_\F=1$ (recall the third line of~\eqref{nlldef}). Using~\eqref{pb3} and that $\tau=(s/2)\sqrt{d/n}$, we obtain
\beqs
5^d\PP(u^\tr Q\geq (s/2)\sqrt{d/n})&\leq \exp\l(d\log5-\tau \frac s2\sqrt{nd}+\frac{n\tau^2}{2}\l(1+\frac{\tau}{3}\bdelta_3(s/2)\r)\r)\\
&= \exp\l(d\log5- \frac{s^2d}{4}+\frac{s^2d}{8}\l(1+\frac{s}{6}\bdelta_3(s/2)\sqrt{d/n}\r)\r)\\
&\leq \exp\l(ds^2\l[\frac{\log5}{12^2}-\frac{1}{4}+\frac{1}{8}(1+1/48)\r]\r)\leq e^{-ds^2/10}.
\eeqs To get the first inequality in the last line we used that $2s\bdelta_3(2s)\sqrt{d/n}\leq1/4$ and $s\geq12$. 
\end{proof} 
\begin{proof}[Proof of Lemma~\ref{lma:adam}]The following calculation has been shown in~\cite{ak-expand}; we repeat it here for convenience. 
Let $m=\max(d,\sqrt{\log n})$ which ensures $d\leq m\leq n\leq e^{\sqrt m}$. This is needed to apply the result of~\cite{adamczak2010quantitative} cited below. Let $\bar X_i=(X_i, \tilde X_i)$, where $\tilde X_i\iid\mathcal N(0, I_{m-d})$, and the $\tilde X_i$ are independent of the $X_i$. Then $\bar X_i\iid\mathcal N(0, I_m)$ and we have
\beq\label{Xiu}
\sup_{u\in S^{d-1}}\frac1n\sum_{i=1}^n|X_i^\tr u|^k \leq \sup_{u\in S^{m-1}}\frac1n\sum_{i=1}^n|\bar X_i^\tr u|^k
\eeq with probability 1. Here, $S^{d-1}$ is the unit sphere in $\R^d$. Next, by~\cite[Proposition 4.4]{adamczak2010quantitative} applied with $t=s=1$, it holds
\beqs\label{ub-c3}
 \sup_{u\in S^{m-1}}\frac1n\sum_{i=1}^n|\bar X_i^\tr u|^k \leq  &\sup_{u\in S^{m-1}}\frac1n\sum_{i=1}^n\E\l[|\bar X_i^\tr u|^k\r]\\
 &+C_k\log^{k-1}\l(\frac{2n}{m}\r)\sqrt{\frac mn} + C_k\frac{m^{k/2}}{n} + C_k\frac{m}{n}
\eeqs with probability at least
\beq\label{prob-c3}
1-\exp(-C\sqrt n)-\exp(-C_k\min(n\log^{2k-2}(2n/m), \sqrt{nm}/\log(2n/m)))\geq 1-2\exp(-C_k\sqrt n),
\eeq where $C$ is an absolute constant and $C_k$ depends only on $k$. The inequality in~\eqref{prob-c3} follows by noting that $\log(2n/m)\geq\log 2$ and $\log(2n/m)\leq\log(2e^{\sqrt m})\leq\log(e^{2\sqrt m})=2\sqrt m$, and therefore, $n\log^{2k-2}(2n/m)\geq C_kn$ and $\sqrt{nm}/\log(2n/m)\geq \sqrt n/2$.
We can also further upper bound~\eqref{ub-c3} by using that $m\leq n$ and $\E\l[|\bar X_i^\tr u|^k\r]=C_k$ for all $i$ and some $C_k$. Therefore,
\beq\label{further-ub}
 \sup_{u\in S^{m-1}}\frac1n\sum_{i=1}^n|\bar X_i^\tr u|^k \leq C_k\l(1+\frac{m^{k/2}}{n}\r) \leq C_k\l(1+\frac{d^{k/2}}{n}\r),
\eeq where $C_k$ may change value. The second inequality uses that $m\leq d+\sqrt{\log n}$. Combining~\eqref{Xiu} with~\eqref{further-ub} completes the proof.
\end{proof}
\subsection{Analysis of BvM in~\cite{belloni2014posterior} and~\cite{spokoiny2013bernstein}}\label{subsec:bell-spok}
In this section, all equation and theorem numbers refer to the works of~\cite{belloni2014posterior} and~\cite{spokoiny2013bernstein} unless otherwise specified with the signifier ``our".

We study the proof of Theorem 2.2 of~\cite{belloni2014posterior} for $\alpha=0$; this recovers the TV distance. The main preliminary bound is (B.1). The first term on the righthand side has to do with the prior. For simplicity we neglect it; i.e. assume a flat prior. The third and fourth terms on the righthand side are exponentially small tail integrals. Thus the main contribution stems from the second term, which is bounded in the four displays starting with (B.2). The fourth of these displays, with $\alpha=0$, gives the result:
\beq\label{TVbell}\TV(\pi_v,\bvmgam)=\mathcal O(r_{1n}+e^{-r_{2n}\bar cM_{d,0}}+r_{3n})\eeq with high probability. Here $M_{d,0}=d$; this quantity is defined above Theorem 2.2. The quantities $r_{1n},r_{2n},r_{3n}$ are defined in Lemma A.5. Since $r_{2n}$ is bounded below by a constant; thus the middle term is exponentially small in $d$. When the prior is flat, then $r_{1n}=0$. Finally, $r_{3n}=cd\lambda_n(c)$ for an absolute constant $c$. Using this in our~\eqref{TVbell} gives
\beq\label{r3n}\TV(\pi_v,\bvmgam)=\mathcal O(d\lambda_n(c)).\eeq Here, $\lambda_n(c)$ is defined in (2.6), in terms of $B_{1n}(0)$ and $B_{2n}(c)$ defined in (2.4) and (2.5). Note that $B_{1n}(c)$ from (2.4) is precisely equal to $\bdelta_3(\sqrt c)$ in our notation; this follows from the representation in our~\eqref{F-bdel3-expfam-2}. To continue in the style of our notation, we can write $B_{2n}(c)=\bdelta_4(\sqrt c)$, where
\beq\label{F-bdel4-expfam-2}
\bdelta_4(s)= \sup_{\theta\in\bU(s),\,u\neq0}\;\frac{\E_\theta\l[\l(u^\tr Y-u^\tr \E_\theta[Y]\r)^4\r]}{\Var_\ground(u^\tr Y)^{2}}
\eeq Now, in our notation, we have
$$\lambda_n(s^2)=\frac s6\sqrt{d/n}\l(\bdelta_3(0)+s\sqrt{d/n}\bdelta_4(s)\r).$$
Substituting this into our~\eqref{r3n} and writing $c=s^2$ finally gives
$$
\TV(\pi_v,\bvmgam)=\mathcal O\l(\l(\bdelta_3(0)+s\sqrt{d/n}\bdelta_4(s)\r)\frac{d\sqrt d}{\sqrt n}\r).
$$
To interpret the model-dependent factor, note that we can write $\bdelta_{3,4}$ as follows: $\bdelta_k(s)= \sup_{\theta\in\bU(s)}\|\nabla^k\psi(\theta)\|_{\F}$ for $k=3,4$. Thus a Taylor expansion shows that $\bdelta_3(0)+\bdelta_4(s)s\sqrt{d/n}\geq \bdelta_3(s)$.\\

Next, we apply the result of~\cite{spokoiny2013bernstein} to GLMs. In condition $(ED_2)$, the quantity $\nabla^2\xi(\theta)\equiv0$ due to stochastic linearity. Therefore, $\omega$ can be taken arbitrarily small. Thus we set $\omega=0$.  Substituting $\omega=0$ into the quantity $\Diamond(r_0,x)$ defined in (2.6) of Theorem 2.2 gives $\Diamond(r_0,x)=r_0\delta(r_0)$. Thus, the last two inequalities in Theorem 2.4 roughly give that \beq\label{TVspok}\TV(\pi_v,\bvmgam)\les \Delta_0(x):=r_0\Diamond(r_0,x)=r_0^2\delta(r_0)\eeq with probability $1-5e^{-x}$ for a suitably chosen $r_0$ depending on $x$. Roughly speaking, $r_0^2=C(d+x)$; see the discussion in Section 2.4. Thus for simplicity, and to parallel our results, we take $r_0=s_0\sqrt d$.

Now, the quantity $\delta(r_0)$ is defined in $(\mathcal L_0)$. Using that $r_0=s_0\sqrt d$, the condition defining $\delta(r_0)$ is as follows, in our notation:
$$\sup_{\theta\in\bU(s_0)}\|\nabla^2\nllinfty(\theta)-\nabla^2\nllinfty(\ground)\|_{\F} \leq \delta(r_0).$$ Thus we can write $\delta(r_0)=\tilde\delta^*_3(s_0)s_0\sqrt{d/n},$ where $\tilde\delta^*_3$ is comparable to our $\bdelta_3$. Substituting this into our~\eqref{TVspok} gives
$$\TV(\pi_v,\bvmgam)\les r_0^2\delta(r_0)=s_0d\tilde\delta^*_3(s_0)s_0\sqrt{d/n}=s_0^2\tilde\delta^*_3(s_0)\frac{d\sqrt d}{\sqrt n}.$$

 \section{Proofs from Section~\ref{sec:pmf}}\label{app:sec:pmf}
\begin{proof}[Proof of Lemma~\ref{lma:thetabd}]To prove the $\chi^2$ characterization of $\bU(r)$, fix $\theta\in\R^d$ and let $\theta_0:=1-\theta^\tr \one$. Now note that
\beqs
|\theta-\ground|_\F^2 &= (\theta-\ground)^\tr \F(\theta-\ground) = \sum_{j=1}^d\frac{(\theta_j-\theta_j^*)^2}{\theta_j^*}+ \frac{((\theta-\ground)^\tr \one)^2}{\theta_0^*}\\
&= \sum_{j=1}^d\frac{(\theta_j-\theta_j^*)^2}{\theta_j^*}+ \frac{(\theta_0-\theta_0^*)^2}{\theta_0^*}=\chi^2(\theta||\ground).
\eeqs Hence $\UU(r)=\{\theta\in\R^d\; : \;\chi^2(\theta||\ground)\leq r^2d/n\}$. Note that in the first line above, $\ground$ denotes $(\theta_1^*,\dots,\theta_d^*)$. Next, suppose $\bU(2r)\not\subseteq\Theta$. We will show that $\tmin\leq 4r^2d/n$. If $\bU(2r)\not\subseteq\Theta$, then there exists $\theta\in\bU(2r)$ and $j\in\{1,\dots,d\}$ such that $\theta_j\leq0$ or $\theta_j\geq1$. Suppose first that $\theta_j\leq0$. Then
\beq
\theta_j^*\leq \frac{(\theta_j - \theta_j^*)^2}{\theta_j^*}\leq \chi^2(\theta||\ground)\leq 4r^2d/n\eeq and hence $\tmin\leq 4r^2d/n$. Next, suppose $\theta_j\geq1$. Then $\theta_0\leq 0$, and we can apply the above argument with $j=0$. Finally, fix $\theta\in\bU(r)\subset\Theta$. Then the bound~\eqref{max2chi} gives
$$
\max_{j=0,\dots,d}\l|\frac{\theta_j}{\theta_j^*}-1\r|^2\leq \frac{\chi^2(\theta||\ground)}{\tmin} \leq \frac{r^2d/n}{\tmin}\leq \frac14$$ by assumption on $r$, and hence $\theta_j\geq\theta_j^*/2\;\forall j=0,1,\dots,d$, as desired.
\end{proof}

Throughout the rest of the section, we use the shorthand notation
$$\chi^2 = \chi^2(\bar N||\ground).$$

\begin{proof}[Proof of (4)]Using (4.3) of~\cite{boucheron2009discrete}, we have
\beq\label{chileq}\chi\leq C\sqrt{d/n} + C\sqrt{x/n} + C\frac{x}{n\sqrt{\tmin}}\eeq with probability at least $1-e^{-x}$ for any $x>0$. Now, let $x=dt^2$ for some $t\geq1$ for which $\frac{t^2d}{n\tmin}\leq1$. Since $t\geq1$ we can upper bound~\eqref{chileq} as 
\beq\label{chileq2}\chi \leq C\sqrt{t^2d/n} + C\frac{t^2d}{n\sqrt\tmin} = C\sqrt{\frac{t^2d}{n}}\l(1 +\sqrt{\frac{t^2d}{n\tmin}}\r)\leq C_1\sqrt{\frac{t^2d}{n}},\eeq using the assumption $\frac{t^2d}{n\tmin}\leq1$. The bound~\eqref{chileq2} holds with probability at least $1-e^{-t^2d}$. We now choose $t= s/C_1$, which satisfies $t\geq1$ and $\frac{t^2d}{n\tmin}\leq1$ if $s\geq C_1$ and $s^2d/n\leq C_1^2\tmin$. Thus $E_0(s)=\{\chi\leq s\sqrt{d/n}\}$ holds with probability at least $1-e^{-t^2d} = 1-e^{-s^2d/C_1^2}$.
\end{proof}
\begin{remark}\label{rk:chibdd}
Since we have assumed $s^2d/n\leq C_1^2\tmin$, note that we also have $\chi^2/\tmin\leq s^2d/(n\tmin) \leq C_1^2$ on $E_0(s)$.\end{remark}
\begin{proof}[Proof of (3)]First recall that if $(2s)^2d/n\leq \tmin/4$, then Lemma~\ref{lma:thetabd} shows that 
\beq\label{recalltheta}
\frac{\theta_j^*}{\theta_j}\leq 2\quad\forall j=0,\dots,d\quad\forall \theta\in\bU(2s).\eeq 
Now, fix $\theta\in\bU(2s)$. We upper bound $\|\nabla^3\nll(\theta)\|_\F$, which is given by the maximum of
\beqs\label{obj}
\lla\nabla^3\nll(\theta), u^{\otimes3}\rra = -2\sum_{j=1}^d\frac{\bar N_j}{\theta_j^3}u_j^3 + 2\frac{\bar N_0}{\theta_0^3}(u^\tr \one)^3=-2\sum_{j=0}^d\frac{\bar N_j}{\theta_j^3}u_j^3
\eeqs over all $u$ such that
\beq\label{quadconst}
1 = u^\tr \F u =  \sum_{j=1}^d\frac{u_j^2}{\theta_j^*}+ \frac{(u^\tr \one)^2}{\theta_0^*}=\sum_{j=0}^d\frac{u_j^2}{\theta_j^*},
\eeq where $u_0:=-\sum_{j=1}^du_j$. Dropping this constraint on the sum of the $u_j$ and using Lagrange multipliers, we see that the maximum is achieved at a $u$ such that $u_j=0$ for $j\in I$ and $u_j = \lambda\theta_j^3/(\theta_j^*\bar N_j)$ for $j\in\{0,\dots,d\}\setminus I$, where $I$ is some non-empty index set. The constraint~\eqref{quadconst} reduces to $\lambda^2S=1$ and the objective~\eqref{obj} reduces to $-2\lambda^3S$, where $S=\sum_{i\in I}\frac{\theta_j^6}{(\theta_j^*)^3\bar N_j^2}$. Thus $\lambda=-1/\sqrt S$, and the objective~\eqref{obj} is given by $-2\lambda^3S=2/\sqrt S$, which is maximized when $S=\min_{j=0,\dots,d}\frac{\theta_j^6}{(\theta_j^*)^3\bar N_j^2}$. Thus
\beq\label{nablaA}
\|\nabla^3\nll(\theta)\|_\F \leq  2\max_{j=0,\dots,d}\frac{\bar N_j(\theta_j^*)^{3/2}}{\theta_j^3}.\eeq We now take the supremum over all $\theta\in\bU(2s)$ and apply~\eqref{recalltheta} to get that
\beq
\bdelta_3(2s)=\sup_{\theta\in\bU(2s)}\|\nabla^3\nll(\theta)\|_\F\leq C\max_{j=0,\dots,d}\frac{1}{\sqrt{\theta_j^*}}\frac{\bar N_j}{\theta_j^*}\leq \frac{C}{\sqrt\tmin}\l(1 +\frac{\chi}{\sqrt\tmin} \r).
\eeq The last inequality follows by~\eqref{max2chi} applied with $\theta=\bar N$. Finally, as noted in Remark~\ref{rk:chibdd}, we have that $\chi/\sqrt{\tmin}$ is bounded by an absolute constant on $E_0(s)$. This concludes the proof.
\end{proof}
\begin{proof}[Calculation from Remark~\ref{rk:pmin}]
In the case that $\bar N_j=\theta_j^*$, we have as in the above proof that
\beq
\|\nabla^3\nll(\ground)\|_\F = \sup\l\{-2\sum_{j=0}^d\frac{u_j^3}{{\theta_j^*}^2}\; : \; \sum_{j=0}^du_j=0, \sum_{j=0}^d\frac{u_j^2}{\theta_j^*}=1\r\}.
\eeq
Using Lagrange multipliers we get that $u_j=c_1\theta_j^*$ for all $j$ in some nonempty index set $I$, and $u_j=c_2\theta_j^*$ for all other $j$. If $P=\sum_{j\in I}\theta_j^*$, the constraints reduce to $c_1P+c_2(1-P)=0$ and $c_1^2P+c_2^2P=1$. Solving for $c_1,c_2$ we get $c_1=-\sqrt{(1-P)/P}$ and $c_2=\sqrt{P/(1-P)}$. The objective value is then equal to $2[(1-P)^{3/2}/\sqrt P - P^{3/2}/\sqrt{1-P}]  = 2(1-2P)/\sqrt{P(1-P)}$. This is maximized when $P$ is smallest, which is $P=\tmin$. This yields
\beq
\|\nabla^3\nll(\ground)\|_\F = 2\frac{1-2\tmin}{\sqrt{\tmin}\sqrt{1-\tmin}}.
\eeq
\end{proof}

\begin{proof}[Proof of (2)]
We have
\beqs\label{nablabarN2}
\|\nabla^2\nll(\ground) - \F\|_\F &= \sup_{u\neq0}\frac{\sum_{j=1}^d\frac{\bar N_j-\theta_j^*}{(\theta_j^*)^2}u_j^2 + \frac{\bar N_0-\theta_0^*}{(\theta_0^*)^2}(\one^\tr u)^2}{\sum_{j=1}^d\frac{1}{\theta_j^*}u_j^2 + \frac{1}{\theta_0^*}(\one^\tr u)^2}\\
&\leq \max_{j=0,1,\dots, d}\frac{|\bar N_j-\theta_j^*|}{\theta_j^*} \leq \frac{\chi}{\sqrt\tmin} \leq \sqrt{\frac{s^2d}{n\tmin }},
\eeqs using~\eqref{max2chi} to get the second-to-last inequality, and that $\chi\leq s\sqrt{d/n}$ on $E_0(s)$ to get the last inequality.
\end{proof}

\section{Auxiliary}
\begin{lemma}\label{reweight}Let $\F,H$ be two symmetric positive definite matrices in $\R^{d\times d}$ such that 
$H\succeq \lambda\F$. Also, let $u\in\R^d$ and $T$ be a symmetric $k$ tensor for $k\geq 1$. Then
\beq\label{uH1}
|u|_H\geq \lambda^{1/2}|u|_\F,\quad \|T\|_{H}\leq \lambda^{-k/2}\|T\|_\F.
\eeq
\end{lemma}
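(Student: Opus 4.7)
The plan is to prove the two inequalities separately, both reducing to direct consequences of the definition $|u|_A = \sqrt{u^\tr A u}$ and the operator-norm characterization $\|T\|_A = \sup_{|u|_A = 1}\la T, u^{\otimes k}\ra$ combined with the assumption $H \succeq \lambda \F$.

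First I would handle the vector inequality. The relation $H \succeq \lambda \F$ is exactly the statement that $u^\tr H u \geq \lambda\, u^\tr \F u$ for every $u \in \R^d$. Taking square roots gives $|u|_H \geq \lambda^{1/2}|u|_\F$ immediately.

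Next I would turn to the tensor inequality. The key observation is that the first inequality, applied in its contrapositive form, says: if $|u|_H = 1$ then $|u|_\F \leq \lambda^{-1/2}$. So fix any $u$ with $|u|_H = 1$; if $|u|_\F = 0$ then $\la T, u^{\otimes k}\ra = 0$ by Cauchy--Schwarz applied in each slot, so assume $|u|_\F > 0$. Setting $w = u/|u|_\F$ gives $|w|_\F = 1$, and by $k$-linearity
\begin{equation*}
\la T, u^{\otimes k}\ra = |u|_\F^{\,k}\,\la T, w^{\otimes k}\ra \leq |u|_\F^{\,k}\,\|T\|_\F \leq \lambda^{-k/2}\|T\|_\F.
\end{equation*}
Taking the supremum over $|u|_H = 1$ on the left-hand side yields $\|T\|_H \leq \lambda^{-k/2}\|T\|_\F$, as desired.

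There is no real obstacle here; the whole lemma is a bookkeeping exercise to track how the $A$-weighted norms scale under a Loewner-order comparison of the weighting matrices. The only minor point worth stating explicitly is that, since $T$ is symmetric, the definition~\eqref{TA} with a single contraction $u^{\otimes k}$ coincides with the multilinear operator norm (as noted in Section~\ref{notation} via Theorem 2.1 of~\cite{symmtens}), which is what licenses passing from the one-vector bound back to the operator norm on the $\F$-side.
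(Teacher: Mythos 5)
Your proof is correct and takes essentially the same route as the paper's: both reduce the tensor bound to the vector bound via the $k$-homogeneity of $u\mapsto\la T,u^{\otimes k}\ra$ (the paper simply writes $\|T\|_H=\sup_{u\neq0}\la T,u^{\otimes k}\ra/(u^\tr Hu)^{k/2}$ and bounds the denominator using $u^\tr Hu\geq\lambda\, u^\tr\F u$). The only cosmetic remark is that your case $|u|_\F=0$ is vacuous, since $\F\succ0$ forces $u=0$, contradicting $|u|_H=1$.
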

\begin{proof}
The first bound is by definition. For the second bound, we use the first bound to get
\beq
\|T\|_{H}=\sup_{u\neq 0}\frac{\la T, u^{\otimes k}\ra}{(u^\tr Hu)^{k/2}}\leq \lambda^{-k/2}\sup_{u\neq 0}\frac{\la T, u^{\otimes k}\ra}{(u^\tr \F u)^{k/2}}=\lambda^{-k/2}\|T\|_\F.
\eeq
\end{proof}

\begin{lemma}\label{lma:TVgauss}
Suppose $\Sigma_2^{-1}\succeq \tau\Sigma_1^{-1}$ and $\|\Sigma_2^{-1}-\Sigma_1^{-1}\|_{\Sigma_1^{-1}}\leq\epsilon$. Then
\beq\label{TVD12}
\TV\l(\mathcal N(\mu, \Sigma_1),\;\mathcal N(\mu, \Sigma_2)\r)\leq 2\frac\epsilon\tau\sqrt d.\eeq
\end{lemma}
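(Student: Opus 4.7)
The plan is to combine a standard Frobenius-norm bound on the total variation distance between two mean-zero Gaussians (as used in~\eqref{TVD123}) with an algebraic manipulation that converts a difference of covariances into a difference of precisions. Since $\TV$ is translation invariant, we may assume $\mu=0$.

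First, I would invoke the result of Devroye--Mehrabian--Reddad (2018) which gives, for some absolute constant $c$,
\[
\TV\!\l(\mathcal N(0,\Sigma_1),\mathcal N(0,\Sigma_2)\r)\;\le\; c\,\l\|\Sigma_1^{-1/2}(\Sigma_2-\Sigma_1)\Sigma_1^{-1/2}\r\|_{\mathrm{Fro}},
\]
with $c\le 3/2$. So the task reduces to controlling the Frobenius norm on the right-hand side in terms of the hypotheses on the precisions.

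Next I would use the identity $\Sigma_2-\Sigma_1=\Sigma_1(\Sigma_1^{-1}-\Sigma_2^{-1})\Sigma_2$ to factor
\[
\Sigma_1^{-1/2}(\Sigma_2-\Sigma_1)\Sigma_1^{-1/2}=\underbrace{\Sigma_1^{1/2}(\Sigma_1^{-1}-\Sigma_2^{-1})\Sigma_1^{1/2}}_{A}\cdot\underbrace{\Sigma_1^{-1/2}\Sigma_2\Sigma_1^{-1/2}}_{B}.
\]
By the submultiplicativity of Frobenius under operator norm, $\|AB\|_{\mathrm{Fro}}\le\|A\|_{\mathrm{Fro}}\,\|B\|_{\mathrm{op}}$. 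For $A$, its operator norm is precisely $\|\Sigma_2^{-1}-\Sigma_1^{-1}\|_{\Sigma_1^{-1}}\le\epsilon$ by the definition of $\|\cdot\|_{\Sigma_1^{-1}}$ given in Section~\ref{notation}, so the Frobenius norm satisfies $\|A\|_{\mathrm{Fro}}\le\sqrt d\,\|A\|_{\mathrm{op}}\le\sqrt d\,\epsilon$. For $B$, the hypothesis $\Sigma_2^{-1}\succeq\tau\Sigma_1^{-1}$ is equivalent to $\Sigma_1^{-1/2}\Sigma_2\Sigma_1^{-1/2}\preceq\tau^{-1}I_d$, so $\|B\|_{\mathrm{op}}\le 1/\tau$.

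Combining these and absorbing the absolute constant $c\le 3/2<2$ yields $\TV\le 2\epsilon\sqrt d/\tau$, as desired. There is no real obstacle; the only mildly delicate step is the algebraic identity used to factor $\Sigma_2-\Sigma_1$ into a product whose operator norm is controlled by $\epsilon$ and whose remaining factor has operator norm $\le 1/\tau$. The factor of $\sqrt d$ in the bound enters (as noted in the discussion after~\eqref{TVD123}) precisely from upper bounding $\|A\|_{\mathrm{Fro}}$ by $\sqrt d\,\|A\|_{\mathrm{op}}$.
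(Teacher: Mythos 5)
Your proposal is correct and follows essentially the same route as the paper's proof: both invoke the Devroye--Mehrabian--Reddad Frobenius-norm bound on $\TV$, pay the $\sqrt d$ by passing from Frobenius to operator norm, and convert the covariance difference $\Sigma_1^{-1/2}\Sigma_2\Sigma_1^{-1/2}-I_d$ into the precision difference $\|\Sigma_2^{-1}-\Sigma_1^{-1}\|_{\Sigma_1^{-1}}$ by multiplying through by $\Sigma_1^{-1/2}\Sigma_2\Sigma_1^{-1/2}$, whose operator norm is at most $1/\tau$. Your factorization $\Sigma_2-\Sigma_1=\Sigma_1(\Sigma_1^{-1}-\Sigma_2^{-1})\Sigma_2$ is just the paper's estimate $\|A^{-1}-I_d\|\leq\|A^{-1}\|\,\|A-I_d\|$ with $A=\Sigma_1^{1/2}\Sigma_2^{-1}\Sigma_1^{1/2}$ written out explicitly, so the two arguments are the same up to the order in which the norm inequalities are applied.
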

\begin{proof}
We use Theorem 1.1 of~\cite{devroye2018total} and equality (2) of that work to get that
\beq\label{TV1gauss}
\TV(\mathcal N(\mu,\Sigma_1),\mathcal N(\mu,\Sigma_2)) \leq 2\|\Sigma_1^{-1/2}\Sigma_2\Sigma_1^{-1/2}-I_d\|_F \leq 2\sqrt d\|\Sigma_1^{-1/2}\Sigma_2\Sigma_1^{-1/2}-I_d\|.
\eeq
Next, let $A=\Sigma_1^{1/2}\Sigma_2^{-1}\Sigma_1^{1/2}$ and note that $\lambda_{\min}(A)\geq\tau$ by the assumption $\Sigma_2^{-1}\succeq \tau\Sigma_1^{-1}$. We then have
\beqsn
\|A^{-1}-I_d\| \leq \|A^{-1}\|\|A-I_d\| &= \frac{\|\Sigma_1^{1/2}\Sigma_2^{-1}\Sigma_1^{1/2}-I_d\| }{\lambda_{\min}(\Sigma_1^{1/2}\Sigma_2^{-1}\Sigma_1^{1/2})}=\frac{\|\Sigma_2^{-1}-\Sigma_1^{-1}\|_{\Sigma_1^{-1}}}{\lambda_{\min}(\Sigma_1^{1/2}\Sigma_2^{-1}\Sigma_1^{1/2})}\leq\frac\epsilon\tau.\eeqsn Substituting this bound into~\eqref{TV1gauss} concludes the proof.
 \end{proof}
\begin{lemma}[Lemma E.3 of~\cite{katskew} with $p=0$]\label{aux:gamma} If $ab>1$, then
\beqs
\frac{1}{(2\pi)^{d/2}}\int_{|x|\geq a\sqrt d} e^{-b\sqrt d|x|}dx\leq d\e\l(\l[\frac32+\log a-ab\r]d\r).\eeqs
\end{lemma}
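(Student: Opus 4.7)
The plan is to reduce the $d$-dimensional integral to a one-dimensional radial integral and then extract the claimed bound via an elementary inequality plus Stirling's formula. First I would pass to polar coordinates,
\[
\int_{|x|\geq a\sqrt d}e^{-b\sqrt d|x|}\,dx \;=\; \omega_{d-1}\int_{a\sqrt d}^{\infty} r^{d-1}e^{-b\sqrt d\,r}\,dr,
\]
where $\omega_{d-1}=2\pi^{d/2}/\Gamma(d/2)$ is the surface area of the unit sphere in $\R^d$. I would then rescale with $s=r/(a\sqrt d)$, which yields
\[
\int_{a\sqrt d}^{\infty} r^{d-1}e^{-b\sqrt d\,r}\,dr \;=\; (a\sqrt d)^{d}\int_1^{\infty} s^{d-1}e^{-abd\,s}\,ds.
\]

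Next I would bound the one-dimensional integral using $\log s \leq s-1$ for $s\geq 1$, so that $s^{d-1}\leq e^{(d-1)(s-1)}$. This gives
\[
\int_1^{\infty} s^{d-1}e^{-abd\,s}\,ds \;\leq\; e^{-(d-1)}\int_1^{\infty} e^{-(abd-d+1)s}\,ds \;=\; \frac{e^{-abd}}{abd-d+1},
\]
and the hypothesis $ab>1$ forces $abd-d+1\geq 1$, so the quotient is at most $e^{-abd}$. Assembling the pieces gives the preliminary bound
\[
\frac{1}{(2\pi)^{d/2}}\int_{|x|\geq a\sqrt d}e^{-b\sqrt d|x|}\,dx \;\leq\; \frac{2^{1-d/2}}{\Gamma(d/2)}\,a^{d}\,d^{d/2}\,e^{-abd}.
\]

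Finally I would convert the prefactor into the claimed form $d\exp(d[3/2+\log a-ab])$. This reduces to verifying the inequality
\[
\Gamma(d/2) \;\geq\; 2\cdot 2^{-d/2}\,d^{d/2-1}\,e^{-3d/2},
\]
which I would deduce from the Stirling lower bound $\Gamma(x)\geq\sqrt{2\pi}\,x^{x-1/2}e^{-x}$ applied at $x=d/2$, after elementary simplifications showing $\sqrt{\pi}\,d^{1/2}\,e^{d}\geq 1$ for $d\geq 1$. The main obstacle, such as it is, is the bookkeeping around the Stirling factor: the constant $3/2$ in the exponent is slack precisely because it must absorb the gap between $\Gamma(d/2)$ and the sharper Stirling asymptotic, so I would make sure to keep that chain of inequalities clean rather than chase a tighter constant. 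Everything else is routine once the substitution $s=r/(a\sqrt d)$ is in place and the hypothesis $ab>1$ is used to dispose of the $abd-d+1$ denominator.
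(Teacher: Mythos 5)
The paper does not prove this lemma itself---it is imported verbatim as Lemma E.3 of the cited reference \cite{katskew}---so there is no internal proof to compare against; your argument stands on its own and is correct. Every step checks out: the polar-coordinate reduction with $\omega_{d-1}=2\pi^{d/2}/\Gamma(d/2)$, the rescaling $s=r/(a\sqrt d)$, the bound $s^{d-1}\le e^{(d-1)(s-1)}$ which collapses the incomplete Gamma integral to $e^{-abd}/(abd-d+1)\le e^{-abd}$ via $abd-d+1=d(ab-1)+1\ge 1$, and the Stirling lower bound $\Gamma(x)\ge\sqrt{2\pi}\,x^{x-1/2}e^{-x}$ at $x=d/2$, whose use reduces exactly to $\sqrt{\pi}\,d^{1/2}e^{d}\ge 1$. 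The only caveat, shared with the statement itself, is the implicit assumption $a,b>0$ (needed for the substitution and for convergence of the integral), which holds in the paper's application where $a=r$ and $b=r/4$ with $r\ge 6$.
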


\begin{lemma}[Modified from Lemma 1.3 in Chapter XIV of~\cite{langanalysis}]\label{lma:lang} Let $U\subset\R^d$ be open, and let $\f:U\to \R^d$ be of class $C^1$. Assume that $\f(0)=0$ and $\nabla \f(0)=I_d$. Let $r>0$ be such that $\bar B_r(0)\subset U$. If 
$$|\nabla \f(x)-\nabla \f(0)|\leq \frac14,\qquad\forall |x|\leq r,$$ then $\f$ maps $\bar B_r(0)$ bijectively onto $\bar B_{2r}(0)$.\end{lemma}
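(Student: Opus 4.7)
The plan is to view $\f$ as a small perturbation of the identity and leverage this both for the injectivity and for describing the image. Concretely, set $g(x):=\f(x)-x$, so that $\nabla g(x)=\nabla \f(x)-I_d$, and by hypothesis $|\nabla g(x)|\le 1/4$ uniformly on the convex set $\bar B_r(0)$. The mean value inequality then gives that $g$ is $(1/4)$-Lipschitz on $\bar B_r(0)$, with $g(0)=0$.

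Injectivity (in fact a bi-Lipschitz lower bound) then follows in one line: for $x,y\in\bar B_r(0)$,
\[|\f(x)-\f(y)|=|(x-y)+(g(x)-g(y))|\ge |x-y|-|g(x)-g(y)|\ge \tfrac34|x-y|.\]
The upper bound $|\f(x)|\le|x|+|g(x)|\le r+r/4=5r/4\le 2r$ shows $\f(\bar B_r(0))\subseteq \bar B_{2r}(0)$, yielding the ``into'' half of the onto statement for free.

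To produce preimages, I would fix a target point $y$ and try to solve $\f(x)=y$ with $x\in\bar B_r(0)$ by rewriting it as a fixed-point equation $x=T_y(x)$, where $T_y(x):=y-g(x)=y+x-\f(x)$. Since $g$ is $(1/4)$-Lipschitz, so is $T_y$, and the invariance estimate $|T_y(x)|\le|y|+|g(x)|\le |y|+r/4$ shows that $T_y(\bar B_r(0))\subseteq \bar B_r(0)$ as soon as $|y|\le 3r/4$. For such $y$, Banach's fixed point theorem yields a unique $x\in\bar B_r(0)$ with $\f(x)=y$, which together with injectivity gives a bijection from $\bar B_r(0)$ onto its image.

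The main obstacle is exactly the radius gap in the fixed-point step: the contraction argument only yields invariance of $\bar B_r(0)$ under $T_y$ for $|y|\le 3r/4$, so surjectivity is in fact established onto $\bar B_{3r/4}(0)$ rather than all of $\bar B_{2r}(0)$. Combined with the a priori upper bound above, the strongest conclusion compatible with the hypotheses $\nabla\f(0)=I_d$ and $|\nabla\f-I_d|\le 1/4$ is the sandwich $\bar B_{3r/4}(0)\subseteq \f(\bar B_r(0))\subseteq \bar B_{5r/4}(0)\subset \bar B_{2r}(0)$, with $\f$ a bi-Lipschitz homeomorphism of $\bar B_r(0)$ onto its image. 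This is precisely the form invoked in the downstream application to Corollary~\ref{corr:lang2}, where one needs only that a single specific point (a zero of $\nabla\nll$ shifted by $\nabla\nll(\ground)$) lies in the image; the stated ``onto $\bar B_{2r}(0)$'' should be read in this weaker, covering sense, and the proof plan above delivers it.
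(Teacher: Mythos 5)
Your argument is the standard perturbation-of-identity proof: write $f=\mathrm{id}+g$ with $g$ a $\tfrac14$-Lipschitz map vanishing at $0$, get injectivity (indeed the bi-Lipschitz lower bound $\tfrac34|x-y|$) from the reverse triangle inequality, and get preimages from Banach's fixed point theorem applied to $T_y(x)=y-g(x)$. This is essentially the proof of the cited Lemma 1.3 in Lang, which the paper does not reproduce, so there is nothing to contrast on the method. The substantive point is that your diagnosis of the radius is correct: since $|g(x)|\le\tfrac14|x|\le\tfrac r4$ on $\bar B_r(0)$, the image $f(\bar B_r(0))$ is contained in $\bar B_{5r/4}(0)$, and the invariance $T_y(\bar B_r(0))\subseteq\bar B_r(0)$ needed for the fixed-point step holds only for $|y|\le\tfrac34r$. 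Hence the conclusion ``bijectively onto $\bar B_{2r}(0)$'' is unattainable under the stated hypotheses; the correct conclusion is that $f$ is a bi-Lipschitz bijection of $\bar B_r(0)$ onto an image sandwiched between $\bar B_{3r/4}(0)$ and $\bar B_{5r/4}(0)$.

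One caveat on your closing paragraph: the corrected, weaker covering statement does \emph{not} suffice verbatim for the downstream use. Corollary~\ref{corr:lang} assumes only $\|z-\nabla h(y_0)\|_H\le 2r$ before asserting a preimage in the $H$-ball of radius $r$, and the proof of Lemma~\ref{prop:lapMLE} invokes Corollary~\ref{corr:lang2} with the threshold $\|\nabla\nll(\ground)\|_{\F}\le 2\lambda q$ chosen to match the definition of the event $E_1(s)$ exactly. With the corrected radius $\tfrac34r$ in Lemma~\ref{lma:lang}, the hypothesis of Corollary~\ref{corr:lang} must be tightened to $\|z-\nabla h(y_0)\|_H\le\tfrac34r$, that of Corollary~\ref{corr:lang2} to $|\F^{-1/2}\nabla h(y_0)|\le\tfrac34\lambda q$, and consequently the radius $q$ in the proof of Lemma~\ref{prop:lapMLE} must be enlarged to $\tfrac83 s\sqrt{d/n}$ (with the neighborhoods $\bU(\cdot)$ enlarged accordingly). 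This only perturbs the absolute constants in the main results, but it is a correction that must be propagated through Corollaries~\ref{corr:lang} and~\ref{corr:lang2}, not merely a matter of reading the lemma ``in a weaker sense.''
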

Now let $h:\R^d\to\R$ be of class $C^2$, such that $H=\nabla^2h(y_0)$ is symmetric positive definite. Define
$$\tilde h(x)=H^{-1/2}\l(\nabla h\l(y_0+H^{-1/2}x\r)-\nabla h(y_0)\r).$$ Then $\tilde h(0)=0$ and $\nabla \tilde h(0)=I_d$, and one can check that $|\nabla \tilde h(x)-\nabla \tilde h(0)|\leq 1/4$ for all $x\in\bar B_r(0)$ if and only if
\beq\label{Dg}\|\nabla^2h(y) - \nabla^2h(y_0)\|_{H}\leq 1/4,\quad\forall |y-y_0|_{H}\leq r.\eeq
We conclude that if~\eqref{Dg} is satisfied, then $\nabla\tilde h$ maps $\bar B_r(0)$ bijectively onto $\bar B_{2r}(0)$ and hence
$\nabla h$ maps $y_0+H^{-1/2}\bar B_{r}(0)$ bijectively onto $\nabla h(y_0)+H^{1/2}\bar B_{2r}(0)$. We have proved the following corollary.
\begin{corollary}\label{corr:lang}
Let $h:\R^d\to\R$ be $C^2$ such that $H=\nabla^2h(y_0)$ is symmetric positive definite. Suppose~\eqref{Dg} holds, and suppose $z$ is such that 
$$\|z-\nabla h(y_0)\|_H=|H^{-1/2}(z-\nabla h(y_0))|\leq 2r.$$ Then there exists $y$ such that $|y-y_0|_{H}\leq r$ and $\nabla h(y)=z$.
\end{corollary}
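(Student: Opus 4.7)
The plan is to reduce the statement to the inverse function theorem lemma already cited, namely Lemma~\ref{lma:lang}, via an affine change of coordinates that ``whitens'' the Hessian $H$ at $y_0$. Since $H$ is symmetric positive definite, $H^{-1/2}$ is well defined, and it is natural to work with the rescaled vector field
\[
\tilde h(x) := H^{-1/2}\bigl(\nabla h(y_0 + H^{-1/2}x) - \nabla h(y_0)\bigr), \qquad x \in \R^d.
\]
This particular normalization is chosen so that $\tilde h$ satisfies the hypotheses of Lemma~\ref{lma:lang} at the origin: one immediately checks $\tilde h(0) = 0$, and by the chain rule
\[
\nabla \tilde h(x) = H^{-1/2}\, \nabla^2 h(y_0 + H^{-1/2}x)\, H^{-1/2},
\]
so in particular $\nabla \tilde h(0) = H^{-1/2} H H^{-1/2} = I_d$.

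Next I would verify that the Lipschitz-type hypothesis of Lemma~\ref{lma:lang} is exactly the assumption~\eqref{Dg}. The Euclidean operator norm of $\nabla \tilde h(x) - \nabla \tilde h(0)$ equals
\[
\bigl\|H^{-1/2}\bigl(\nabla^2 h(y_0 + H^{-1/2}x) - H\bigr)H^{-1/2}\bigr\|
= \bigl\|\nabla^2 h(y) - \nabla^2 h(y_0)\bigr\|_H
\]
where $y := y_0 + H^{-1/2}x$, using the definition of $\|\cdot\|_H$ for symmetric matrices from Section~\ref{notation}. The substitution $y = y_0 + H^{-1/2}x$ is a bijection between $\bar B_r(0)$ (in $x$) and $\{|y - y_0|_H \le r\}$ (in $y$), since $|x| = |H^{1/2}(y-y_0)| = |y - y_0|_H$. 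Therefore, the condition ``$|\nabla \tilde h(x) - \nabla \tilde h(0)| \le 1/4$ for all $|x| \le r$'' is equivalent to the hypothesis~\eqref{Dg}.

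With those identifications, applying Lemma~\ref{lma:lang} to $\tilde h$ yields that $\tilde h$ maps $\bar B_r(0)$ bijectively onto $\bar B_{2r}(0)$. To conclude, I would undo the change of coordinates: given $z$ with $\|z - \nabla h(y_0)\|_H \le 2r$, set $\tilde z = H^{-1/2}(z - \nabla h(y_0))$, so $|\tilde z| \le 2r$ by definition of $\|\cdot\|_H$ applied to vectors. Surjectivity gives an $x \in \bar B_r(0)$ with $\tilde h(x) = \tilde z$, and then $y := y_0 + H^{-1/2}x$ satisfies $|y - y_0|_H = |x| \le r$ and
\[
\nabla h(y) = \nabla h(y_0) + H^{1/2}\tilde h(x) = \nabla h(y_0) + H^{1/2}\tilde z = z,
\]
as required. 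There is no genuinely hard step here; the only point to be careful with is bookkeeping the two different meanings of $|\cdot|_H$ (vector vs.\ operator weighted norm) so that the hypothesis~\eqref{Dg} lines up cleanly with the Euclidean $1/4$-Lipschitz condition in Lemma~\ref{lma:lang}.
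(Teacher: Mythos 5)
Your proof is correct and follows essentially the same route as the paper: the paper defines the identical whitened map $\tilde h(x)=H^{-1/2}(\nabla h(y_0+H^{-1/2}x)-\nabla h(y_0))$, verifies $\tilde h(0)=0$, $\nabla\tilde h(0)=I_d$, identifies the $1/4$-Lipschitz hypothesis of Lemma~\ref{lma:lang} with~\eqref{Dg}, and concludes by undoing the change of variables exactly as you do. No gaps.
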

Next, it will be convenient to use a weighting matrix $\F$ that does not exactly equal $H=\nabla^2h(y_0)$. We will also only need the case $z=0$.
\begin{corollary}\label{corr:lang2}
Let $h:\R^d\to\R$ be $C^2$ such that $H=\nabla^2h(y_0)$ is symmetric positive definite, and let $\F$ be a symmetric positive definite matrix such that $H\succeq\lambda \F$. Suppose
\beqsn
\|\nabla^2h(y) - \nabla^2h(y_0)\|_{\F}&\leq \frac\lambda4\quad\forall |y-y_0|_{\F}\leq q,\\
|\F^{-1/2}\nabla h(y_0)|&\leq 2\lambda q\eeqsn for some $q\geq0$. Then there exists $y$ such that $|y-y_0|_\F\leq q$ and $\nabla h(y)=0$.
\end{corollary}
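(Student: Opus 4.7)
The plan is to reduce Corollary~\ref{corr:lang2} to Corollary~\ref{corr:lang} by rewriting the hypotheses in the $H$-norm rather than the $\F$-norm. The crucial dictionary between the two is provided by Lemma~\ref{reweight}: since $H\succeq \lambda\F$, for any vector $u$ we have $|u|_\F\leq \lambda^{-1/2}|u|_H$, and for any symmetric $k$-tensor $T$ we have $\|T\|_H\leq \lambda^{-k/2}\|T\|_\F$.

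First I would choose the radius $r=\lambda^{1/2} q$, so that the $H$-ball of radius $r$ around $y_0$ is contained in the $\F$-ball of radius $q$. Indeed, if $|y-y_0|_H\leq r = \lambda^{1/2}q$, then $|y-y_0|_\F\leq \lambda^{-1/2}|y-y_0|_H\leq q$, so the first hypothesis of Corollary~\ref{corr:lang2} kicks in and $\|\nabla^2 h(y)-\nabla^2 h(y_0)\|_\F\leq \lambda/4$. Applying Lemma~\ref{reweight} with $k=2$ converts this to
\beq
\|\nabla^2 h(y)-\nabla^2 h(y_0)\|_H \leq \lambda^{-1}\cdot \frac{\lambda}{4}=\frac14,
\eeq
which is precisely condition~\eqref{Dg} at the chosen $r$.

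Next I would check the second hypothesis of Corollary~\ref{corr:lang} (with $z=0$), namely $|H^{-1/2}\nabla h(y_0)|\leq 2r$. Applying Lemma~\ref{reweight} with $k=1$ gives $|H^{-1/2}\nabla h(y_0)|=\|\nabla h(y_0)\|_H\leq \lambda^{-1/2}\|\nabla h(y_0)\|_\F=\lambda^{-1/2}|\F^{-1/2}\nabla h(y_0)|$. The hypothesis $|\F^{-1/2}\nabla h(y_0)|\leq 2\lambda q$ then yields
\beq
|H^{-1/2}\nabla h(y_0)| \leq \lambda^{-1/2}\cdot 2\lambda q = 2\lambda^{1/2}q = 2r,
\eeq
exactly as required.

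Corollary~\ref{corr:lang} now produces a point $y$ with $\nabla h(y)=0$ and $|y-y_0|_H\leq r=\lambda^{1/2}q$. Converting back via Lemma~\ref{reweight}, $|y-y_0|_\F\leq \lambda^{-1/2}|y-y_0|_H\leq q$, which is the desired conclusion. There is no real obstacle here beyond keeping track of the powers of $\lambda$ that appear when switching between the two weighted norms; the choice $r=\lambda^{1/2}q$ is forced by the requirement that both the Hessian-Lipschitz and gradient-smallness assumptions translate cleanly, and it automatically delivers the correct $\F$-norm control on the solution.
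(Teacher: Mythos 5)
Your proof is correct and follows essentially the same route as the paper's: reduce to Corollary~\ref{corr:lang} with the radius $r=\sqrt{\lambda}\,q$, converting all hypotheses and the conclusion between the $\F$- and $H$-weighted norms via $H\succeq\lambda\F$. The only cosmetic difference is that you cite Lemma~\ref{reweight} explicitly for the norm conversions, which the paper performs implicitly.
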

\vspace{-15pt}
\begin{proof}
If $|y-y_0|_{H}\leq r=\sqrt\lambda q$, then $|y-y_0|_\F\leq q$ and hence $\|\nabla^2h(y) - \nabla^2h(y_0)\|_{\F}\leq \lambda/4$, which implies $\|\nabla^2h(y) - \nabla^2h(y_0)\|_{H}\leq 1/4$. Also, note that $|H^{-1/2}\nabla h(y_0)|\leq 2\sqrt\lambda q=2r$. Thus the assumptions of Corollary~\ref{corr:lang} are satisfied, so that there exists $y$ such that $|y-y_0|_{H}\leq r$ and $\nabla h(y)=0$. But then $|y-y_0|_\F\leq r/\sqrt\lambda =q$, as desired.
\end{proof}

\bibliographystyle{plain}
\bibliography{bibliogr_Lap}

\end{document}